\def\subsection{\@startsection{section}{1}%
  \z@{1.2\linespacing\@plus\linespacing}{.5\linespacing}%
  {\normalfont\scshape\centering}}
\def\subsection{\@startsection{subsection}{2}%
  \z@{0.9\linespacing\@plus.7\linespacing}{-.5em}%
  {\normalfont\bfseries}}
\definecolor{gr}{rgb}   {0.,   0.6,   0.25 }
\definecolor{mg}{rgb}   {0.85,  0.,    0.85}
\definecolor{marin}{rgb}   {0.,   0.,   0.8}
\definecolor{rouge}{rgb}   {0.8,   0.,   0.}
\definecolor{orange}{rgb}   {0.8,   0.4,   0.}
\newtheorem{theorem}{Theorem}[section]
\newtheorem{lemma}[theorem]{Lemma}
\newtheorem{proposition}[theorem]{Proposition}
\newtheorem{corollary}[theorem]{Corollary}
\newtheorem{conjecture}[theorem]{Conjecture}
\theoremstyle{definition}
\theoremstyle{remark}
\newtheorem{remark}[theorem]{Remark}
\numberwithin{equation}{section}
\newcommand{\dd}[1]{_{\raise-0.3ex\hbox{$\scriptstyle #1$}}}
\newcommand{\on}[1]{\raise-.5ex\hbox{\big|}_{#1}}
\newcommand{\pv}{\mathop{\rm p.v.}}
\newcommand{\im}{\operatorname{\rm im}}
\renewcommand{\div}{\operatorname{\rm div}}
\newcommand{\curl}{\operatorname{\rm curl}}
\newcommand\C{{\mathbb C}}
\newcommand\R{{\mathbb R}}
\newcommand\N{{\mathbb N}}
\renewcommand\SS{{\mathbb S}}
\newcommand\Z{{\mathbb Z}}
\newcommand{\sF}{{\mathscr F}}
\newcommand{\sS}{{\mathscr S}}
\newcommand\cC{{\mathcal{C}}}
\newcommand\cL{{\mathcal{L}}}
\newcommand {\Id}{\mathbb I}
\newcommand{\dist}{\mathrm{dist}}
\newcommand{\sign}{\mathrm{sign}}
\newcommand{\Sp}{\mathop{\mathrm{Sp}}\nolimits}
\let\hat=\widehat
\title[Stability of a Simple Discretization Method]{
Stability Analysis of a Simple Discretization Method\\ for a Class of \\Strongly Singular Integral Equations}
\author{Martin Costabel}
\author{Monique Dauge}
\author{Khadijeh Nedaiasl}
\address{Univ. Rennes, CNRS, IRMAR - UMR 6625, F-35000 Rennes, France}
\email{Martin.Costabel@univ-rennes1.fr}
\email{Monique.Dauge@univ-rennes1.fr}
\address{Department of Mathematics, IASBS, Gavazang Road, Zanjan, Iran}
\email{knedaiasl85@gmail.com, nedaiasl@iasbs.ac.ir}
\keywords{volume integral equation, strongly singular kernel, delta-delta discretization, discrete dipole approximation, numerical stability}
\subjclass{65R20, 45E10, 47A12, 65B10, 78M99}
\begin{document}

\begin{abstract}
Motivated by the discrete dipole approximation (DDA) for the scattering of electromagnetic waves by a dielectric obstacle that can be considered as a simple discretization of a Lippmann-Schwinger style volume integral equation for time-harmonic Maxwell equations, we analyze an 
analogous discretization of convolution operators with strongly singular kernels. 

For a class of kernel functions that includes the finite Hilbert transformation in 1D and the principal part of the Maxwell volume integral operator used for DDA in dimensions 2 and 3, we show that the method, which does not fit into known frameworks of projection methods, can nevertheless be considered as a finite section method for an infinite block Toeplitz matrix. The symbol of this matrix is given by a Fourier series that does not converge absolutely. We use Ewald's method to obtain an exponentially fast convergent series representation of this symbol and show that it is a bounded function, thereby allowing to describe the spectrum and the numerical range of the matrix.

It turns out that this numerical range includes the numerical range of the integral operator, but that it is in some cases strictly larger. In these cases the discretization method does not provide a spectrally correct approximation, and while it is stable for a large range of the spectral parameter $\lambda$, there are values of $\lambda$ for which the singular integral equation is well posed, but the discretization method is unstable.

\end{abstract}

\maketitle

\tableofcontents

\section{Introduction}\label{S:intro}

\subsection{Motivation}\label{SS:motiv}\ 

Introduced almost 50 years ago by Purcell and Pennypacker \cite{PurcellPennypacker1973}, the Discrete Dipole Approximation (DDA) is a classical numerical method in computational electromagnetics that is the subject of a vast and still rapidly growing literature (see the surveys \cite{YurkinHoekstra2007,Chaumet_M_22}), but is virtually unknown in the mathematical community. It can be considered as a numerical approximation scheme for a strongly singular volume integral equation that, however, is too simple to fit into any known framework for standard approximation schemes for such equations (Galerkin, collocation or Nystr\"om methods etc). In particular, to the authors' knowledge, there does not exist any error estimate or convergence proof for this method. 

In the paper  \cite{Yurkin:06i}, estimates for a consistency error are derived, and it is observed that to complete the convergence analysis, a uniform estimate for the inverse of the matrix of the linear system (stability estimate) would be needed. In the present paper, we prove first results on the way to such stability estimates for the DDA and related numerical schemes. The class of singular integral equations considered here includes the quasi-static case (i.e.\ zero frequency limit) of the Maxwell volume integral equation that describes the scattering of electromagnetic waves by a penetrable dielectric body in the case of constant electric permittivity. Further stability results for the non-zero frequency case will be the subject of a forthcoming paper.

Because of the simplicity of the class of operators considered here (convolution operators with kernels positively homogeneous of degree $-d$ on a bounded domain $\Omega\subset\R^{d}$), we are able to obtain rather sharp results on the region of stability, by estimating the numerical range of the discretized operator in comparison with the numerical range of the integral operator. It turns out that for some operators, including the quasi-static Maxwell case in dimension $d\ge2$, the stability region is smaller than what one would na\"ively expect. 
This corresponds to the fact that the eigenvalues of the system matrix, as the mesh-width of the discretization tends to zero, accumulate on a set that is strictly larger than the convex hull of the essential spectrum of the integral operator.

In the paper \cite{Rahola2000}, motivated by the convergence analysis of iterative solutions of the resulting large linear systems, the essential spectrum of the Maxwell volume integral operator was studied for the case of scattering by a dielectric ball in $\R^{3}$. This is a subset of the segment in the complex plane that corresponds to the essential numerical range of the integral operator. It is now known (see \cite{CoDarSak2012,CoDarSak2015}) that the same form of the essential spectrum is valid for more general bounded Lipschitz domains.
In \cite{Rahola2000}, results of some numerical experiments are then shown that seem to indicate that the eigenvalues of the system matrices accumulate either at isolated points, corresponding to eigenvalues of the integral operator and hence to eigenvalues or resonances of the scattering problem, or at the points of the segment that is spanned by the essential spectrum of the integral operator. Looking closer at Figures 4.2--4.4 of \cite{Rahola2000}, one can detect an ``overshoot'', namely that the observed segment of accumulation points is actually larger than the span of the essential spectrum. 

In the paper \cite{YurkinMinHoekstra2010}, there is a discussion of the spectrum of the system matrices of the DDA scheme for the quasi-static Maxwell equations, motivated by the numerical modeling of the scattering of light by dust particles whose size is small with respect to the wavelength of the light (``Rayleigh particles''). Based on extensive experience with numerical computations using the DDA code ADDA, the authors are convinced that the DDA provides a faithful approximation of the solution of the volume integral equation in the sense that, among other things, the spectral measure of the DDA system matrices converges to the spectral measure of the volume integral operator. They provide plots of the spectral density of these matrices, including a zoom on a neighborhood of the lower end of the spectrum, see graph (a) in \cite[FIG. 8]{YurkinMinHoekstra2010}. There one can clearly see that there is an overshoot, namely a part of the spectrum below zero, and that its negative minimum does not disappear as the number of dipoles grows, but rather seems to converge to some number around $-0.09$.
In a subsequent paper \cite{Smunev_JQSRT_15}, the authors detect this ``spill-out'' of the spectrum of the DDA system matrices and relate it to an explosion of the needed iterations in an iterative solution method that they observed for large refractive indices. They study the behavior of this overshoot for anisotropic meshes, where it becomes larger, and for some recently introduced improvements of the DDA, where it seems to disappear.

For the quasi-static Maxwell case we prove below (see Proposition~\ref{P:W(F)Ex5} and \eqref{E:Lambda+-3}) that for the classical DDA on a cubic grid such an overshoot indeed exists and that it amounts to an almost $20\%$ increase of the length of the segment spanned by the essential spectrum. 

This somewhat unexpected result implies that the simple discretization scheme of the DDA does not provide a spectrally correct approximation of the strongly singular volume integral operator. The additional observation, supported by numerical experience, that this concerns only a small neighborhood of the essential spectrum or perhaps even only of the endpoints of this spectrum, whereas discrete eigenvalues and large parts of the spectral density nevertheless are correctly approximated, still awaits a precise description and proof.

It also implies that the DDA scheme is actually unstable in high-contrast situations, namely if the relative permittivity is very small (smaller than $\sim 0.093$ ) or very large (larger than $\sim 11.8$). We prove this here for the zero-frequency limit, but expect that it is also true for non-zero frequencies.

\subsection{The Discrete Dipole Approximation}\label{SS:DDA}\ 

As its name indicates, the DDA (sometimes called Coupled Dipole Approximation) can be considered as an approximation of a dielectric continuum described by Maxwell's equations by a different physical system consisting of a finite number of dipoles that are characterized by their polarizability, interacting via electromagnetic fields.  

The same mathematical system can be obtained by a procedure more amenable to arguments of numerical analysis, namely by transforming the Maxwell equations for the original dielectric continuum into an equivalent Lippmann-Schwinger style volume integral equation and then discretizing this integral equation by a simple delta-delta approximation on a regular grid
$\{x_{n}\mid n\in\Z^{d}\}$ of meshwidth $h>0$.

Thus a linear integral equation on a bounded domain $\Omega\subset\R^{d}$
\begin{equation}
\label{E:IE}
  \lambda u(x) - \int_{\Omega}K(x,y)u(y)dy = f(x) \quad(x\in\Omega)
\end{equation}
will be approximated by the finite dimensional linear system 
\begin{equation}
\label{E:DDAp}
  \lambda u_{m} - \sum_{x_{n}\in\Omega,n\ne m}h^{d}K(x_{m},x_{n})u_{n} = f(x_{m})
  \quad(x_{m}\in\Omega)\,.
\end{equation}
We omit the diagonal term $m=n$, because we shall have to do with singular kernels. Apart from this, \eqref{E:DDAp} looks like a Galerkin method with Dirac deltas as trial and test functions.

Let us briefly describe the construction of the volume integral equation.
A more detailed derivation can be found in \cite{Kirsch2007} and, with special emphasis on the two-dimensional situation, in \cite{CoDarSak2015}.  
We write the time harmonic Maxwell equations with normalized frequency $\kappa\in\C$ as a second order system for the electric field $u$.
\begin{equation}
\label{E:Maxwell}
\curl\curl u -\kappa^{2}\epsilon u= i\kappa J.
\end{equation}
Here it is assumed that the magnetic permeability is constant (normalized to $1$) in the whole space. If one further assumes that the permittivity $\epsilon$ is equal to $1$ outside of a bounded domain and the source current $J$ has compact support, one can write this as a perturbation of the free-space situation
\begin{equation}
\label{E:Maxwellpert}
\curl\curl u -\kappa^{2} u= -\kappa^{2}(1-\epsilon) u + i\kappa J.
\end{equation}
Here the right hand side has compact support, and therefore convolution with the outgoing fundamental solution $g_{\kappa}$ of the Helmholtz equation and application of the operator 
$\nabla\div + \kappa^{2}$ leads to the volume integral equation in distributional form
\begin{equation}
\label{E:VIEdist}
 u = -(\nabla\div + \kappa^{2})g_{\kappa}\star(1-\epsilon)u + u^{\rm inc} \,.
\end{equation}
Here the incoming field $u^{\rm inc}$ combines the field generated by the current density with possible sourceless full space solutions of Maxwell's equations (plane waves etc.) 

Equation \eqref{E:VIEdist}  can be considered in any dimension $d\ge2$, but only $d=2$ and $d=3$ are relevant for electrodynamics. The equation can be written in the form  of a second kind strongly singular integral equation with the $d\times d$ matrix valued kernel
\begin{equation}
\label{E:VIOkernel}
 K(x,y) = -(D^{2}+\kappa^{2})g_{\kappa}(x-y)\,.
\end{equation}
The integral operator thus defined involves second order distributional derivatives of the weakly singular kernel $g_{\kappa}(x-y)$. Instead of this form of an integro-differential operator, one can write the strongly singular integral operator also in the form of a Cauchy principal value integral, using the well-known relation (for more details, see section~\ref{SSS:SIOEx5} below)
\begin{equation}
\label{E:D2g-pv}
  D^{2}\int_{\R^{d}}g_{\kappa}(x-y)u(y)dy = 
  \pv\!\!\int_{\R^{d}} D^{2}g_{\kappa}(x-y)u(y)dy  - \frac1d u(x)\,.
\end{equation}
If we further assume that the permittivity $\epsilon$ equals a constant $\epsilon_{r}\in\C\setminus\{1\}$ in $\Omega$, we can divide by $1-\epsilon_{r}$ and arrive at the final form \eqref{E:IE} with the integral understood in the principal value sense, the kernel given by \eqref{E:VIOkernel}, and the spectral parameter $\lambda$ defined by the relation
\begin{equation}
\label{E:ClauMoss}
 \lambda = \frac1{1-\epsilon_{r}} - \frac1d = \frac{d-1+\epsilon_{r}}{d(1-\epsilon_{r})}\,.
\end{equation}
For $d=3$, this relation $\lambda=\frac{2+\epsilon_{r}}{3(1-\epsilon_{r})}$ is known in the DDA literature as \emph{Clausius-Mossotti polarizability}, referring to the fact that $\frac1\lambda$ corresponds to the polarizability of the dipoles and to the Clausius-Mossotti equation between the molecular polarizability and the electric permittivity in a dielectric material, see for example \cite[Section 4.5]{Jackson1999}. 

The principal part of the volume integral operator is obtained by taking the limit $\kappa\to0$, and we will refer to this situation as the quasi-static Maxwell case. 
The resulting kernel is homogeneous of degree $-d$, and this property allows to analyze the corresponding linear system \eqref{E:DDAp} using Fourier analysis of Toeplitz matrices. For this reason we study in this paper a class of strongly singular kernels that includes the quasi-static Maxwell kernel.

\subsection{Outline of the paper}\label{SS:outline}\ 

In Section~\ref{SS:kernels} we define a class of strongly singular kernels that are homogeneous of degree $-d$ and translation invariant, and we evoke the relation between the numerical range of the corresponding singular integral operator in $L^{2}$ and values of its symbol. The notion of numerical range allows to use a Lax-Milgram type argument to get a resolvent estimate for the restriction of the convolution operator to a bounded domain $\Omega$.

After introducing in Section~\ref{SS:DDD} the delta-delta discretization, we state in Theorem~\ref{T:stabgeneral} the main stability result valid for our class of operators. 

In Section~\ref{S:discrete} we study tools for proving stability results, namely infinite Toeplitz matrices and their symbols defined by Fourier series. Here a main difficulty is that one needs precise bounds for the values of a function (numerical symbol) defined by a Fourier series that is not absolutely convergent. We find that one can use Ewald's method for this purpose.  
The result is that the symbol of the Toeplitz matrix is a bounded function, and that its range is always a superset of the range of the symbol of the integral operator, but that it might be strictly larger.
If this is the case, then stability of the delta-delta scheme implies well-posedness of the integral equation, but not vice versa: The numerical scheme does then not provide a spectrally correct approximation, and it might be unstable for values of the spectral parameter $\lambda$ for which any Galerkin scheme of the integral equation would be stable.

In Section~\ref{S:Examples} we study in detail five representative examples. 

Example \ref{Ex:1} concerns the one-dimensional singular integral equation defined by the finite Hilbert transformation. Here the numerical symbol has a simple explicit expression, and this can be used to get estimates for the resolvent of the discretized operator by the resolvent of the integral operator, with constant equal to $1$. This gives Theorem~\ref{T:Stab1D}, which is the ideal stability result that subsequent results are measured against. 

Examples \ref{Ex:2} and \ref{Ex:3} exhibit different behavior of the delta-delta scheme for two strongly singular integral operators in two dimensions. Whereas the two integral operators are equivalent, related by a simple rotation of the coordinate system, the two discrete systems show opposite behavior: We prove that in Example \ref{Ex:2} the ranges of the symbol of the integral operator and of the symbol of the infinite Toeplitz matrix are identical, whereas in Example \ref{Ex:3} there is an overshoot; the region of instability of the approximation scheme is strictly larger than the numerical range of the integral operator. 

In Example \ref{Ex:4}, we graphically illustrate the relations, proved in Sections~\ref{SS:kernels} and \ref{S:discrete},
 between the spectrum and numerical range of the system matrices and the numerical range of the singular integral operator by considering a non-selfadjoint case. The kernel is a complex-valued function whose real and imaginary parts are given by the kernels of Examples \ref{Ex:3} and \ref{Ex:2}, respectively. 
 
The kernels studied in Examples \ref{Ex:2} and \ref{Ex:3} are also the off-diagonal and diagonal terms, respectively, in the matrix-valued kernel of the quasi-static Maxwell volume integral operator, which is the subject of Example \ref{Ex:5}. We study this for dimensions $d\ge2$ and give more precise results for $d=2$ and $d=3$. In two dimensions we find the same overshoot of the numerical range of the numerical symbol versus the symbol of the integral operator as Example \ref{Ex:3}. In three dimensions this overshoot is even larger, and it can be verified numerically either by computing the numerical symbol using Ewald's method or by studying the asymptotic behavior of the smallest and largest eigenvalues of the matrix of the linear system as the mesh width tends to zero.

\subsection{Notation for Fourier transforms and Fourier series}\label{SS:notation}\ 

We use the following convention for the Fourier transformation in $\R^{d}$.
\begin{equation}
\label{E:FT}
\hat{f}(\xi)=\sF f(\xi)=\int_{\R^{d}}f(x)e^{i\xi\cdot x}dx\,.
\end{equation}
Inverse: 
\begin{equation}
\label{E:FTinv} 
f(x) = \sF^{-1}\hat{f}(x)=(2\pi)^{-d}\int_{\R^{d}}\hat f(\xi)e^{-ix\cdot \xi}d\xi\,.
\end{equation}
For Fourier series, we use the following notation.
For a sequence $a:\Z^{d}\to\C$, its Fourier series is defined as
\begin{equation}
\label{E:FS}
\tilde a(\tau) = \sum_{m\in\Z^{d}}a(m) e^{im\cdot \tau}\,,\quad
\tau \in Q=[-\pi,\pi]^{d} .
\end{equation}
Inverse:
\begin{equation}
\label{E:FSinv}
a(m) = (2\pi)^{-d}\int_{Q}\tilde a(\tau) e^{-im\cdot \tau}d\tau.
\end{equation}
The definitions are extended in the usual way from convergent sums and integrals to suitable spaces of functions and distributions. In particular, we have Parseval's theorem
\begin{equation}
\label{E:Parseval}
f\mapsto (2\pi)^{-\frac d2}\hat{f}: L^{2}(\R^{d}) \to L^{2}(\R^{d}) 
\quad\mbox{ and }\quad
a\mapsto (2\pi)^{-\frac d2}\tilde{a}: \ell^{2}(\Z^{d}) \to L^{2}(Q) 
\end{equation}
are unitary (i.e.\ isometric Hilbert space isomorphisms).

Combining the Parseval formula and the convolution theorem gives
\begin{equation}
\label{E:Par+conv,int}
\int_{\R^{2d}}\overline{u(x)}\,k(x-y)\,v(y)\,dy\,dx
= (2\pi)^{-d}\int_{\R^{d}}\overline{\hat u(\xi)}\,\hat k(\xi)\,\hat v(\xi)\,d\xi\,,
\end{equation}
\begin{equation}
\label{E:Par+conv,series}
\sum_{m.n\in\Z^{d}}\overline{a(m)}\,c(m-n)\,b(n) = 
(2\pi)^{-d} \int_{Q}\overline{\tilde{a}(\tau)}\,\tilde c(\tau) \,\tilde b(\tau)\, d\tau\,.
\end{equation}
From these formulas follows immediately that the operators of convolution with $k$ in $L^{2}(\R^{d})$ and of discrete convolution with $c$ in $\ell^{2}(\Z^{d})$ are bounded if and only if the ``symbols'' $\hat k$ and $\tilde c$ are bounded functions belonging to $L^{\infty}(\R^{d})$ and $L^{\infty}(Q)$, respectively. 

Sufficient conditions for this are that $k\in L^{1}(\R^{d})$ and $c\in\ell^{1}(\Z^{d})$. But these conditions are not necessary, and it is precisely the situation where they are not satisfied that will be relevant in the following.

We will use the \emph{Poisson summation formula} in the form
\begin{equation}
\label{E:Pois}
\sum_{m\in\Z^{d}} f(m) e^{im\cdot \tau} =
\sum_{n\in\Z^{d}} \hat f(\tau+2\pi n)\,.
\end{equation}
A sufficient (but in no way necessary) condition for \eqref{E:Pois} to hold for all $\tau$ is that 
$$
 f\on{\Z^{d}}\in\ell^{1}(\Z^{d})\quad \mbox{ and }\; \hat f\in L^{1}(\R^{d}).
$$ 

If we do not assume $f\on{\Z^{d}}\in\ell^{1}$, but only $\hat f\in L^{1}$, then $f$ is bounded, the left hand side of \eqref{E:Pois} converges in the distributional sense and the right hand side converges in $L^{1}(Q)$. Then
\eqref{E:Pois} is true in a weaker sense, the distributional left hand side being equal to the $L^{1}(Q)$ right hand side. 

\emph{Example}: Gaussian with parameter $s>0$.
\begin{equation}
\label{E:FGauss}
 f(x) = e^{-|x|^{2}s} \quad\Longleftrightarrow\quad
 \hat f(\xi)= (\tfrac\pi s)^{\frac d2} \,e^{-\frac{|\xi|^{2}}{4s}}\,.
\end{equation}
For this example the Poisson summation formula takes the form (for $\tau\in\R^{d}$)
\begin{equation}
\label{E:PGauss}
\sum_{m\in\Z^{d}} e^{-|m|^{2}s} e^{im\cdot \tau} =
\sum_{n\in\Z^{d}} (\tfrac\pi s)^{\frac d2} \,e^{-\frac{|\tau+2\pi n|^{2}}{4s}}\,.
\end{equation}

\subsection{Kernels and their symbols}\label{SS:kernels}
\subsubsection{Homogeneous kernels}\label{SSS:Homog}
Later on, we will consider a rather restricted class of strongly singular integral operators on $\R^{d}$ that are convolutions with 
kernel functions of the form 
\begin{equation}
\label{E:Kernels}
 K(x) = p(x)\,|x|^{-d-2} \quad \mbox{where $p$ is a homogeneous polynomial of degree $2$}\,.
\end{equation}

But first we recall some well-known general properties of homogeneous functions and distributions that can be found, for example, in \cite[Chap. III]{Gelfand-Shilov1_1964}.

Let $K$ be a function on $\R^{d}$, positively homogeneous of degree $-d$ and smooth outside of the origin. 
For a given $\epsilon>0$, one can define a distribution $K_{\epsilon}\in\sS'(\R^{d})$ that coincides with $K$ on $\R^{d}\setminus\{0\}$ by its action on a test function $\phi$ as
\begin{equation}
\label{E:Kepsphi}
 \langle K_{\epsilon}, \phi \rangle = \int_{|x|<\epsilon}K(x)(\phi(x)-\phi(0))dx
  + \int_{|x|>\epsilon} K(x)\phi(x)\,dx\,.
\end{equation}
This is independent of $\epsilon$ if and only if $K$ satisfies the cancellation condition on the unit sphere $\SS^{d-1}$
\begin{equation}
\label{E:Kvanish}
\int_{\SS^{d-1}}K\, ds=0\,.
\end{equation}
In this case, we denote the distribution simply by $K$, and we can take the limit 
$\epsilon\to0$, thus we get the Cauchy principal value.
\begin{equation}
\label{E:pvK}
 \langle K, \phi \rangle =
  \pv\!\!\int K(x)\phi(x)\, dx =
  \lim_{\epsilon\to0}\int_{|x|>\epsilon} K(x)\phi(x)\, dx\,.
\end{equation}
Another consequence of the cancellation condition \eqref{E:Kvanish} is that the Fourier transform $\hat K$ of the homogeneous distribution $K$ is a bounded function homogeneous of degree $0$, smooth outside of the origin and also satisfying the cancellation condition. The operator $A$ of convolution with $K$ is therefore bounded in $L^{2}(\R^{d})$.
Note that in the absence of condition \eqref{E:Kvanish}, $\hat K_{\epsilon}$ would have a logarithmic singularity at $0$.

The operator $A$ is diagonalized by Fourier transformation:
\begin{equation}
\label{E:FMul}
\sF Au = \hat K \,\hat u\,.
\end{equation}

Therefore in $L^{2}(\R^{d})$, we can obtain information about the spectrum $\Sp(A)$ and about the numerical range $W(A)$  from the corresponding easily checked information about the operator of multiplication by the symbol $\hat K$. 

We recall that the numerical range of $A$ is defined by
 $$
  W(A) = \{(u,Au) \mid \|u\|=1\}\,,
$$
where $(\cdot,\cdot)$ denotes the Hilbert space inner product. It is convex by the Toeplitz-Hausdorff theorem and it contains the spectrum of $A$.
Denote by $\im(\hat K)=\hat K(\R^{d})$ the image (range)  of $\hat K$. This is a compact set.
We note a first result implied by the unitary equivalence \eqref{E:FMul} with the multiplication operator.
\begin{lemma}
 \label{L:Sp+W(A)}
 The spectrum $\Sp(A)$ is the image $\im(\hat K)$, and the closure $\overline{W(A)}$ of the numerical range of $A$ is the convex hull of $\,\im(\hat K)$.
\end{lemma}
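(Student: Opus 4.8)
The plan is to reduce everything to the multiplication operator that appears in \eqref{E:FMul}. By the Parseval formula \eqref{E:Parseval}, the suitably normalized Fourier transform $\sF$ is a unitary isomorphism of $L^{2}(\R^{d})$ onto itself, and \eqref{E:FMul} states that it conjugates $A$ to the operator $M$ of multiplication by the symbol $\hat K$, namely $(Mg)(\xi)=\hat K(\xi)\,g(\xi)$. Since a unitary conjugation preserves both the spectrum and the numerical range, it suffices to show that $\Sp(M)=\im(\hat K)$ and that $\overline{W(M)}$ is the convex hull of $\im(\hat K)$. Here it is crucial that $M$ is a bounded \emph{normal} operator, its adjoint being multiplication by $\overline{\hat K}$.

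For the spectral part I would use that the spectrum of a bounded multiplication operator is the essential range of its symbol. The step that needs a word of justification is the passage from the essential range to the actual image $\im(\hat K)$. Because $\hat K$ is continuous on $\R^{d}\setminus\{0\}$ and positively homogeneous of degree $0$, its values are determined by those on the unit sphere, so $\im(\hat K)=\hat K(\SS^{d-1})$ is the continuous image of a compact set and hence compact. Every value $\hat K(\xi_{0})$ is attained on a full solid cone, a set of positive measure, so it belongs to the essential range; conversely, since $\{0\}$ is a null set, no value outside the (closed) image can be essentially attained. This gives $\Sp(A)=\Sp(M)=\im(\hat K)$.

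For the numerical range, for any $g\in L^{2}(\R^{d})$ with $\|g\|=1$ one has $(g,Mg)=\int_{\R^{d}}\hat K(\xi)\,|g(\xi)|^{2}\,d\xi$, which is the barycenter of the values of $\hat K$ against the probability measure $|g(\xi)|^{2}\,d\xi$; thus every element of $W(M)$ lies in the convex hull of $\im(\hat K)$. For the reverse inclusion I would argue that concentrating the mass $|g|^{2}$ near a single point $\xi_{0}$ drives the barycenter to $\hat K(\xi_{0})$, so $\overline{W(M)}$ contains $\im(\hat K)$, while mixing two densities $|g_{1}|^{2},|g_{2}|^{2}$ shows that the attained set is convex; taking closures then yields all of the convex hull. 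Alternatively one may simply invoke the classical theorem that for a bounded normal operator the closure of the numerical range equals the convex hull of the spectrum, and combine it with the identification of $\Sp(M)$ above.

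The only genuine subtleties, and hence where I expect the argument to require care rather than routine checking, are precisely these two points: identifying the essential range with the image $\im(\hat K)$, which rests on the continuity and degree-$0$ homogeneity of $\hat K$ together with $\{0\}$ being a null set, and the closedness of the convex hull of the \emph{compact} planar set $\im(\hat K)$. The latter holds because in $\C\cong\R^{2}$ the convex hull of a compact set is again compact by Carathéodory's theorem, so that the convex hull already equals its own closure; this is exactly what reconciles the possibly non-closed numerical range $W(A)$ with the closed set $\overline{W(A)}$ in the statement.
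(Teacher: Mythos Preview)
Your argument is correct and follows exactly the route the paper indicates: the lemma is stated as an immediate consequence of the unitary equivalence \eqref{E:FMul} with the multiplication operator, and you have simply spelled out the standard details (essential range versus image via homogeneity and continuity of $\hat K$, and the normal-operator identity $\overline{W}=\operatorname{conv}\Sp$) that the paper leaves implicit. There is nothing to add or correct.
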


It is well known (and easy to prove) that the numerical range allows estimates for the operator norm of the resolvent: For any 
$\lambda\in\C\setminus\overline{W(A)}$,
$$
 \|(\lambda\Id-A)^{-1}\| \le \dist(\lambda,W(A))^{-1}\,.
$$
It is also monotone with respect to inclusions of subspaces, a property not shared by the spectrum. Given an open set $\Omega\subset\R^{d}$, we denote by $A_{\Omega}$ the restriction of the convolution operator $A$ to $L^{2}(\Omega)$ and consider the strongly singular integral equation $(\lambda \Id -A_{\Omega})u=f$, or in detail
\begin{equation}
\label{E:SIE}
\lambda u(x) - \pv\!\! \int_{\Omega}K(x-y) u(y)\,dy = f(x)\quad (x\in\Omega).
\end{equation}
From the definition of the numerical range follows immediately the inclusion
$W(A_{\Omega})\subset W(A)$. 

We can summarize this discussion:
\begin{proposition}
 \label{P:W(AOmega)}
 Let $\cC\subset\C$ be a closed convex set such that 
 $\hat K(\xi)\in \cC$ for all $\xi\in \SS^{d-1}$. Then for all $\lambda\not\in\cC$ and any $f\in L^{2}(\Omega)$, the integral equation \eqref{E:SIE} has a unique solution $u\in L^{2}(\Omega)$, and there is a resolvent estimate in the $L^{2}(\Omega)$ norm
\begin{equation}
\label{E:resAOmega}
 \|(\lambda\Id-A_{\Omega})^{-1}\| \le \dist(\lambda,\cC)^{-1}\,.
\end{equation}
\end{proposition}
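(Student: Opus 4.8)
The plan is to deduce everything from the general facts already established for the full-space convolution operator $A$, using only the homogeneity of the symbol, the monotonicity inclusion $W(A_\Omega)\subset W(A)$, and the standard numerical-range resolvent bound recalled just before the statement.

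First I would exploit that the symbol $\hat K$ is positively homogeneous of degree $0$, so its image is determined by its restriction to the unit sphere: $\im(\hat K)=\hat K(\SS^{d-1})$. The hypothesis $\hat K(\xi)\in\cC$ for $\xi\in\SS^{d-1}$ thus gives $\im(\hat K)\subset\cC$. Since $\cC$ is closed and convex, it contains the convex hull of $\im(\hat K)$, which by Lemma~\ref{L:Sp+W(A)} equals $\overline{W(A)}$. Hence $\overline{W(A)}\subset\cC$, and by the monotonicity of the numerical range under restriction to the subspace $L^{2}(\Omega)$, also $\overline{W(A_\Omega)}\subset\overline{W(A)}\subset\cC$.

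Next, for $\lambda\notin\cC$ I would run the Lax-Milgram type argument. Because $\cC$ is closed and contains $\overline{W(A_\Omega)}$, we have $\lambda\notin\overline{W(A_\Omega)}$, so $\delta:=\dist(\lambda,W(A_\Omega))>0$. For every $u\in L^{2}(\Omega)$, Cauchy-Schwarz together with the definition of the numerical range yields
$$
 \|u\|\,\|(\lambda\Id-A_\Omega)u\| \ge |(u,(\lambda\Id-A_\Omega)u)| \ge \delta\,\|u\|^{2},
$$
so $\lambda\Id-A_\Omega$ is bounded below by $\delta$; in particular it is injective with closed range. Applying the same estimate to $A_\Omega^{*}$, whose numerical range is the complex conjugate set of $W(A_\Omega)$ and is therefore avoided by $\bar\lambda$, shows that $(\lambda\Id-A_\Omega)^{*}=\bar\lambda\Id-A_\Omega^{*}$ is likewise bounded below; hence $\lambda\Id-A_\Omega$ has dense range. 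Together these give bijectivity, so \eqref{E:SIE} has a unique solution, and the lower bound gives $\|(\lambda\Id-A_\Omega)^{-1}\|\le\delta^{-1}$.

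Finally I would pass from $\delta^{-1}$ to the claimed constant: since $W(A_\Omega)\subset\cC$, the distance to the larger set satisfies $\dist(\lambda,\cC)\le\dist(\lambda,W(A_\Omega))=\delta$, so $\delta^{-1}\le\dist(\lambda,\cC)^{-1}$, which is exactly \eqref{E:resAOmega}. The only step needing genuine care is the invertibility: boundedness below alone does not force surjectivity for a general non-selfadjoint operator, and it is the adjoint estimate that supplies the missing dense-range property. The rest is a direct assembly of Lemma~\ref{L:Sp+W(A)}, the monotonicity inclusion, and the inequality governing the distance to the enlarged set $\cC$.
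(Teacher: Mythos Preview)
Your proof is correct and follows the same route as the paper, which presents the proposition as a summary of the preceding discussion (Lemma~\ref{L:Sp+W(A)}, the standard resolvent bound via the numerical range, and the inclusion $W(A_\Omega)\subset W(A)$). You spell out the surjectivity step via the adjoint more explicitly than the paper, which simply cites the resolvent bound as well known, but the argument is the same.
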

\begin{remark}
 \label{R:StabGal}
The same argument implies stability for any Galerkin method: Let $X_{h}$ be any closed subspace of $L^{2}(\Omega)$, and let $A_{h}:X_{h}\to X_{h}$ be the operator defined by restricting the sesquilinear form $(u,Av)$ to $X_{h}\times X_{h}$. Then the statement of Proposition~\ref{P:W(AOmega)} remains true if we replace $A_{\Omega}$ by $A_{h}$.
\end{remark}
\begin{remark}
 \label{R:WAOmega=WA}
 Whereas there is, in general, no simple relation between the spectra $\Sp(A_{\Omega})$ and $\Sp(A)$, for the numerical ranges of our convolution operators with homogeneous kernels we not only have the inclusion
 $W(A_{\Omega})\subset W(A)$, but also the converse. Namely there holds
 \begin{equation}
\label{E:WAOmega=WA}
  \overline{W(A_{\Omega})} = \overline{W(A)}\quad\mbox{ for any open subset $\Omega\subset\C$.}
 \end{equation}
\begin{proof}
  The set of Rayleigh quotients $\frac{(u,Au)}{(u,u)}$, where 
  $u\in L^{2}(\R^{d})\setminus\{0\}$ has compact support, is a dense subset of $W(A)$. 
 We show that it is a subset of $W(A_{\Omega})$: Indeed,
  let $u$ be such a function and let $\rho>0$ and $a\in\R^{d}$ be chosen such that the support of the function 
$u_{\rho,a}$ defined by
$u_{\rho,a}(x)=u(\rho x +a)$ is contained in $\Omega$. Then
$$
\frac{(u,Au)}{(u,u)}=\frac{(u_{\rho,a},Au_{\rho,a})}{(u_{\rho,a},u_{\rho,a})}\in W(A_{\Omega}).
$$
\end{proof}
\end{remark}

\subsubsection{Special kernels}\label{SSS:Specific}
For $d=1$, there is essentially only one non-trivial kernel homogeneous of degree $-d$, namely $K(x)=\frac1x$.

In $\R^{d}$ for $d\ge2$, while some of the following analysis would be possible for more general homogeneous kernels, we focus now on the situation \eqref{E:Kernels}. 
This means that from now on, we fix a strongly singular kernel $K$ and a homogeneous polynomial $p$ of degree $2$ with $K(x)=p(x)|x|^{-d-2}$, satisfying \eqref{E:Kvanish}, considered as a distribution on $\R^{d}$ according to \eqref{E:pvK}, and we denote by $\hat K$ its Fourier transform. 
\begin{lemma}
 \label{L:Khat} 
 Let $K$ have the form \eqref{E:Kernels} and satisfy \eqref{E:Kvanish}. Then
 \begin{equation}
\label{E:FKhat}
   \hat K(\xi) = -\nu_{d} \frac{p(\xi)}{|\xi|^{2}}, \quad\mbox{ where }
   \nu_{d} = \frac{2\pi^{\frac d2}}{d\,\Gamma(\frac d2)}
   \mbox{ is the volume of the unit ball in }\R^{d}\,.
\end{equation}
\end{lemma}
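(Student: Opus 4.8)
The plan is to reduce the computation of $\hat K$ to the Fourier transform of the fundamental solution of the Laplacian. Writing $p(x)=\sum_{i,j}a_{ij}x_ix_j$ with $a$ symmetric, the first observation is that for a degree-$2$ homogeneous polynomial the cancellation condition \eqref{E:Kvanish} is equivalent to $p$ being harmonic. Indeed, by symmetry $\int_{\SS^{d-1}}x_ix_j\,ds=\tfrac{|\SS^{d-1}|}{d}\delta_{ij}$, so $\int_{\SS^{d-1}}p\,ds=\tfrac{|\SS^{d-1}|}{d}\operatorname{tr}(a)$, which vanishes precisely when $\operatorname{tr}(a)=\tfrac12\Delta p=0$. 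This harmonicity is the structural fact that makes everything go through.

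First, for $d\ge3$, I would differentiate the locally integrable function $|x|^{2-d}$ twice, obtaining for $x\ne0$ the pointwise identity
\[
\partial_i\partial_j|x|^{2-d}=(2-d)\delta_{ij}|x|^{-d}-d(2-d)\,x_ix_j|x|^{-d-2}.
\]
Contracting with $a_{ij}$ and using $\operatorname{tr}(a)=0$ to discard the non-integrable $|x|^{-d}$ term yields
\[
K(x)=p(x)|x|^{-d-2}=-\frac{1}{d(2-d)}\,p(\partial)\,|x|^{2-d},\qquad p(\partial)=\sum_{i,j}a_{ij}\partial_i\partial_j.
\]
I would then apply $\sF$: with the paper's convention $\widehat{\partial_j f}=-i\xi_j\hat f$, the operator $p(\partial)$ becomes multiplication by $-p(\xi)$, and I would insert the classical Riesz value $\widehat{|x|^{2-d}}(\xi)=\frac{2(d-2)\pi^{d/2}}{\Gamma(\frac d2)}|\xi|^{-2}$ (derived by writing $|x|^{2-d}$ as a $\Gamma$-integral of Gaussians and using \eqref{E:FGauss}). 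Collecting the constants and using $\nu_d=\frac{2\pi^{d/2}}{d\,\Gamma(d/2)}$ then gives exactly $\hat K(\xi)=-\nu_d\,p(\xi)/|\xi|^2$.

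The case $d=2$ requires the logarithmic fundamental solution in place of $|x|^{2-d}$. Here I would use $\partial_i\partial_j\log|x|=\delta_{ij}|x|^{-2}-2\,x_ix_j|x|^{-4}$, which after contracting with the trace-free $a_{ij}$ gives $K=-\tfrac12\,p(\partial)\log|x|$; combined with $\widehat{\log|x|}=-2\pi/|\xi|^2$ (read off from $\Delta\log|x|=2\pi\delta$) this reproduces $\hat K=-\pi\,p(\xi)/|\xi|^2=-\nu_2\,p(\xi)/|\xi|^2$, matching the formula since $\nu_2=\pi$.

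The step needing the most care is that the identity for $K$ must hold as a tempered distribution, not merely pointwise: the distributional second derivatives of $|x|^{2-d}$ (resp.\ $\log|x|$) differ from their pointwise values by a Dirac mass at the origin. By homogeneity this correction can only be a multiple of $\delta$ itself, and that multiple is proportional to $\operatorname{tr}(a)$, hence vanishes exactly by harmonicity. Thus the cancellation condition is used twice, once to drop $|x|^{-d}$ and once to drop the $\delta$-correction, after which both sides are homogeneous degree $-d$ distributions satisfying \eqref{E:Kvanish} that agree off the origin, so by \eqref{E:pvK} they coincide. As a conceptual cross-check, the fact that $\hat K$ must have the shape $c\,p(\xi)/|\xi|^2$ also follows from the Bochner--Hecke theorem, the Fourier transform preserving the $O(d)$-irreducible space of degree-$2$ solid harmonics, with the scalar $c=-\nu_d$ then pinned down by the single computation above.
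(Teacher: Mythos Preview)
Your argument is correct and is a genuinely different route from the paper's. The paper never mentions the Laplace fundamental solution; instead it writes $|x|^{-d-2}$ as a Gamma-integral of Gaussians, computes $\sF[p(x)e^{-|x|^2s}]$ directly on the spanning set $a_{jk},b_{jk}$ (obtaining the key identity $p(\partial_\xi)e^{-|\xi|^2/4s}=\tfrac1{4s^2}p(\xi)e^{-|\xi|^2/4s}$), and then integrates over $s$. Your approach identifies $K$ with $p(\partial)$ applied to the Riesz/log potential and imports the classical value of $\widehat{|x|^{2-d}}$; it is arguably more conceptual and makes the role of harmonicity (your observation that \eqref{E:Kvanish} $\Leftrightarrow$ $\operatorname{tr}a=0$) completely transparent. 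The paper's approach, on the other hand, is dimension-uniform with no separate $d=2$ case, and---more importantly for what follows---the Gaussian integral representation it establishes is precisely the starting point for the Ewald splitting in Section~\ref{SS:Ewald}, so the proof there is doing double duty.

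One small point of care in your $d=2$ branch: writing $\widehat{\log|x|}=-2\pi/|\xi|^2$ is formal, since $|\xi|^{-2}$ is not locally integrable in $\R^2$ and the actual Fourier transform of $\log|x|$ is a finite-part distribution plus a multiple of $\delta$. Your conclusion survives because multiplying by the degree-two polynomial $p(\xi)$ (vanishing to second order at $0$) kills both the $\delta$ and the regularization ambiguity, leaving the bounded function $-\pi\,p(\xi)/|\xi|^2$; you may want to say this explicitly rather than rely on the heuristic identity.
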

\begin{proof}
We first compute the Fourier transform of $p(x)e^{-|x|^{2}s}$, using \eqref{E:FGauss}
$$
 \sF_{x\mapsto\xi}[p(x)e^{-|x|^{2}s}] = 
 (\frac\pi s)^{\frac d2} p(-i\partial_{\xi}) e^{-\frac{|\xi|^{2}}{4s}} \,.
$$
The evaluation of these derivatives leads to the following simple result, as we will show:
\begin{equation}
\label{E:Fpexp}
 \sF_{x\mapsto\xi}[p(x)e^{-|x|^{2}s}] = -(\frac\pi s)^{\frac d2}\frac1{4s^{2}} p(\xi) e^{-\frac{|\xi|^{2}}{4s}}\,.
\end{equation}
For $j,k\in\{1,\dots,d\}$ with $j\ne k$, let
\begin{equation}
\label{E:ajk,bjk}
  a_{jk}(x)=x_{j}^{2}-x_{k}^{2}\,,\;\quad  b_{jk}(x)=x_{j}x_{k}\,.
\end{equation}
Any homogeneous polynomial of degree $2$ satisfying the cancellation condition $\int_{\SS^{d-1}}p=0$ is a linear combination of the $a_{jk}$ and $b_{jk}$, so we need to verify \eqref{E:Fpexp} only for these.

Note that $\partial_{\xi_{j}}e^{-\frac{|\xi|^{2}}{4s}} 
 = -\frac1{2s}\xi_{j} e^{-\frac{|\xi|^{2}}{4s}}$ and
$\partial_{\xi_{j}}^{2}e^{-\frac{|\xi|^{2}}{4s}} 
 = \big(-\frac1{2s}+ \frac1{4s^{2}}\xi_{j}^{2}\big) e^{-\frac{|\xi|^{2}}{4s}}$.
 
Then for $p=a_{jk}$, we see
$$
  (\partial_{\xi_{k}}^{2}-\partial_{\xi_{j}}^{2}) e^{-\frac{|\xi|^{2}}{4s}} =
  \frac1{4s^{2}}(\xi_{k}^{2}-\xi_{j}^{2}) e^{-\frac{|\xi|^{2}}{4s}}\,,
$$
and for $p=b_{jk}$, we see
$$
  \partial_{\xi_{j}}\partial_{\xi_{k}} e^{-\frac{|\xi|^{2}}{4s}} =
  \frac1{4s^{2}}\xi_{k}\xi_{j} e^{-\frac{|\xi|^{2}}{4s}}\,.
$$
Thus in both cases we have
\begin{equation}
\label{E:p(dxi)Gauss}
 p(\partial_{\xi}) e^{-\frac{|\xi|^{2}}{4s}} =
 \frac1{4s^{2}} p(\xi) e^{-\frac{|\xi|^{2}}{4s}}\,,
\end{equation}
and \eqref{E:Fpexp} is proved.

Now we use the definition of the Gamma function
$$
 \Gamma(a) = \int_{0}^{\infty}t^{a}e^{-t}\tfrac{dt}{t}
  = |x|^{2a}\int_{0}^{\infty} s^{a}\,e^{-|x|^{2}s}\tfrac{ds}{s}
$$
to write the kernel as an integral over Gaussians
\begin{equation}
\label{E:K=Gauss}
 K(x) = p(x)|x|^{-d-2} = 
 \frac1{\Gamma(\frac d2+1)}\int_{0}^{\infty} s^{\frac d2+1}p(x)e^{-|x|^{2}s}\tfrac{ds}{s}\,.
\end{equation}
Taking Fourier transforms and using \eqref{E:Fpexp}, we find with $u=\frac{|\xi|^{2}
}{4s}$
\begin{equation}
\label{E:KhatGauss}
-\hat K(\xi) = 
\frac{\pi^{\frac d2}}{4\Gamma(\frac d2+1)}\int_{0}^{\infty}\!\!\!\! s^{-1}p(\xi)e^{-\frac{|\xi|^{2}}{4s}}\tfrac{ds}{s}
= p(\xi) |\xi|^{-2} \frac{\pi^{\frac d2}}{\frac d2\Gamma(\frac d2)}\int_{0}^{\infty}\!\!\!\!  e^{-u}du
= \nu_{d}  \,p(\xi) \,|\xi|^{-2}
\end{equation}
as claimed.
\end{proof}

\subsection{Delta-delta discretization}\label{SS:DDD}\ 

Let $N\in\N$, fix some origin $a^{N}\in\R^{d}$ and define the cubic grid of meshwidth $h=\frac1N$ by
$$
 \Sigma^{N}=\{x^{N}_{m}=a^{N}+\frac{m}N \mid m\in\Z^{d}\}\,.
$$ 
We further define 
$$
 \omega^{N} = \{m\in\Z^{d} \mid x^{N}_{m}\in \Omega\}\,.
$$
Then a very simple discretization of the strongly singular integral equation 
$(\lambda \Id -A_{\Omega})u=f$
\eqref{E:SIE} is the following
\begin{equation}
\label{E:DDA}
 \lambda u_{m} - N^{-d}\!\!\!\!\sum_{n\in\omega^{N},m\ne n}K(x^{N}_{m}-x^{N}_{n})u_{n}= f(x^{N}_{m}) \,,\quad (m\in\omega^{N})\,,
\end{equation}
or in shorthand $(\lambda\Id - T^{N})U=F$.

The name ``delta-delta discretization'' points at the fact that this discretization formally looks like a Galerkin method for the integral equation \eqref{E:SIE} with Dirac deltas as both test and trial functions, except for the diagonal terms of $T^{N}$, where we put zero, which is natural in view of the cancellation condition \eqref{E:Kvanish}.

Our aim in this paper is to analyze the linear system \eqref{E:DDA}, in particular its stability in $\ell^{2}(\omega^{N})$, in the same way as we did above for the integral equation \eqref{E:SIE} in $L^{2}(\Omega)$, and to compare the two.

We state a general result here, which we prove in the next section. More precise results will be given below in Section~\ref{S:Examples} for some examples, in particular those mentioned in Subsection~\ref{SS:motiv}.
\begin{theorem}
 \label{T:stabgeneral}
Let $K$ be a strongly singular kernel satisfying \eqref{E:Kernels} and \eqref{E:Kvanish}. Then there exists a compact convex set $\cC\subset\C$ such that for any $\lambda\in\C\setminus\cC$, any $N\in\N$ for which $\omega^{N}$ is non-empty, and for any $F\in\ell^{2}(\Omega^{N})$, the system \eqref{E:DDA} has a unique solution, and there is a uniform estimate for the inverse in the $\ell^{2}(\Omega^{N})$ operator norm
\begin{equation}
\label{E:resTN}
 \|(\lambda\Id-T^{N})^{-1}\|_{\cL(\ell^{2}(\Omega^{N}))} \le \dist(\lambda,\cC)^{-1}\,.
\end{equation}
Furthermore, with the strongly singular integral operator $A$ defined above in Section~\ref{SSS:Homog}, there holds the inclusion
\begin{equation}
\label{E:WAsubsetC}
  W(A) \subset \cC\,.
\end{equation}
\end{theorem}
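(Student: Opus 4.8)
The plan is to recognize the matrices $T^N$ as finite sections of one infinite Toeplitz operator, to take $\cC$ to be the closed numerical range of that operator, and to deduce both the uniform resolvent bound and the inclusion \eqref{E:WAsubsetC} from the monotonicity of the numerical range under compression. The starting point is that homogeneity removes the $N$- and $a^N$-dependence of the matrix entries: since $K$ is positively homogeneous of degree $-d$,
\[
  N^{-d}K(x^N_m-x^N_n)=N^{-d}K\big(\tfrac{m-n}{N}\big)=K(m-n),
\]
so the entries of $T^N$ are $c(m-n)$ with $c(k)=K(k)$ for $k\ne0$ and $c(0)=0$, independent of $N$ and of the origin $a^N$. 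Hence $T^N=P_N T P_N$ is the compression to $\ell^2(\omega^N)$ of the discrete-convolution operator $T$ on $\ell^2(\Z^d)$ with kernel $c$, where $P_N$ denotes the orthogonal projection onto $\ell^2(\omega^N)\subset\ell^2(\Z^d)$.

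Next I would introduce the symbol $\tilde c(\tau)=\sum_{k\ne0}c(k)\,e^{ik\cdot\tau}$ on $Q=[-\pi,\pi]^d$ and, through the Parseval isometry \eqref{E:Parseval} and \eqref{E:Par+conv,series}, identify $T$ with multiplication by $\tilde c$ on $L^2(Q)$. Granting that $\tilde c\in L^\infty(Q)$, the operator $T$ is bounded and $\overline{W(T)}$ is the convex hull of the essential range of $\tilde c$, a compact convex set, which I take as $\cC$. The resolvent estimate \eqref{E:resTN} is then immediate: a compression cannot enlarge the numerical range, since $(U,T^NU)=(U,TU)$ for $U\in\ell^2(\omega^N)$ extended by zero, so $W(T^N)\subset W(T)\subset\cC$ for every $N$, and the standard numerical-range bound gives $\|(\lambda\Id-T^N)^{-1}\|\le\dist(\lambda,W(T^N))^{-1}\le\dist(\lambda,\cC)^{-1}$ for all $\lambda\notin\cC$, uniformly in $N$.

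It remains to prove \eqref{E:WAsubsetC}. By Lemma~\ref{L:Sp+W(A)}, $\overline{W(A)}$ is the convex hull of $\im\hat K$, so by convexity of $\cC$ it suffices to show $\hat K(\xi)\in\cC$ for each $\xi\ne0$. Homogeneity enters once more: writing $K(k)=h^d K(hk)$ for $h>0$,
\[
  \tilde c(h\xi)=\sum_{k\ne0}K(k)\,e^{ihk\cdot\xi}=\sum_{k\ne0}h^d K(hk)\,e^{i(hk)\cdot\xi}
\]
is a Riemann sum for $\pv\!\int K(x)e^{ix\cdot\xi}\,dx=\hat K(\xi)$, whence $\tilde c(h\xi)\to\hat K(\xi)$ as $h\to0^+$ and $\hat K(\xi)$ lies in the closed range of $\tilde c$, hence in $\cC$. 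I expect the genuine difficulties to be exactly the two ``granted'' points: first, proving $\tilde c\in L^\infty(Q)$ even though its defining series is only conditionally convergent, which is where Ewald's splitting of $K$ into a real-space-decaying and a Fourier-decaying part is needed; and second, justifying the Riemann-sum limit for this singular homogeneous kernel, where the cancellation condition \eqref{E:Kvanish} is precisely what forces the contribution near the excluded origin to converge to the principal value rather than diverge. This same analysis shows that $\tilde c$ has the direction-dependent limits $\hat K(\xi)$ as $\tau\to0$, mirroring the degree-zero homogeneity of $\hat K$; it also explains why $\cC$ may be strictly larger than $\overline{W(A)}$, namely when $\tilde c$ attains, away from $\tau=0$, values outside the convex hull of $\im\hat K$.
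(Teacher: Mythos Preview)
Your overall architecture is the same as the paper's: the homogeneity reduction $T^N=P_NTP_N$, the choice $\cC=\overline{W(T)}$ as the convex hull of the essential range of the symbol $\tilde c$, and the compression inequality $W(T^N)\subset W(T)$ yielding the uniform resolvent bound are exactly what the paper does in Section~\ref{SS:Toeplitz} and Lemma~\ref{L:TbyF}.

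The one point worth flagging is your treatment of the inclusion $W(A)\subset\cC$. Your Riemann-sum heuristic $\tilde c(h\xi)\to\hat K(\xi)$ is in fact the same assertion the paper proves, but the paper does not justify it by a Riemann-sum argument. Instead, the Ewald analysis (Proposition~\ref{P:NumSym}) yields the decomposition $F(\tau)=\hat K(\tau)+F_0(\tau)$ with $F_0$ analytic on $Q$ and $F_0(0)=0$; your limit then follows immediately from $F(h\xi)=\hat K(\xi)+F_0(h\xi)\to\hat K(\xi)$. The proof that $F_0(0)=0$ relies on the \emph{cubic} symmetry of the lattice (Lemma~\ref{L:cubicsym}), which forces the lattice sums of $p$ over shells $|m|^2=\text{const}$ to vanish; this is a bit more than ``the cancellation condition handling the origin'' in your Riemann-sum picture, and a direct Riemann-sum proof would be delicate because the integrand is neither in $L^1$ near $0$ nor near $\infty$. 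In short, both of your ``granted'' points are resolved simultaneously by one Ewald computation rather than by two separate arguments, and the second one is not really an independent Riemann-sum issue.
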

\begin{remark}
 \label{R:overshoot}
Note that the inclusion $W(A) \subset \cC$ implies that for $\lambda\not\in\cC$ the singular integral equation is uniquely solvable, too, and provides the a priori estimate \eqref{E:resAOmega}, for any domain $\Omega\subset\R^{d}$. On the other hand, in order to guarantee stability for $\lambda\in\C\setminus\cC$, the inclusion may need to be strict, as we shall see in the examples, and then there may be $\lambda\in\cC\setminus W(A)$ for which the singular integral equation is well posed, but the delta-delta discretization is \emph{unstable}.
\end{remark}

\section{The discrete system}\label{S:discrete}
Let $T^{N}$ be the matrix representing the discretized integral operator in \eqref{E:DDA}:
\begin{equation}
\label{E:TN}
T^{N} = (t^{N}_{mn})_{m,n\in\omega^{N}}
\quad\mbox{ with }\quad
 t^{N}_{mn} = \begin{cases}
                N^{-d}\, K(x^{N}_{m}-x^{N}_{n}) &(m\ne n)\\
                0 &(m=n)
 				\end{cases}
\,.
\end{equation}
Our aim is to bound the numerical range $W(T^{N})$ independently of $N$.

\subsection{Toeplitz structure}\label{SS:Toeplitz}\ 

The matrix elements $t^{N}_{mn}$ of $T^{N}$ do not depend on the choice of the origin $a^{N}$, and
since we assumed that $K$ is homogeneous of degree $-d$, we have
$$
  N^{-d}\, K(x^{N}_{m}-x^{N}_{n}) = K(m-n)\,,
$$
hence $T^{N}$ is a finite section of a fixed infinite Toeplitz (discrete convolution) matrix 
\begin{equation}
\label{E:Tinf}
 T = (t_{mn})_{m,n\in\Z^{d}}
\quad\mbox{ with }\quad
 t_{mn} = \begin{cases}
                K(m-n) &(m\ne n)\,,\\
                0 &(m=n)\,.
 				\end{cases}
\end{equation}
Theorem~\ref{T:stabgeneral} will be proved if we can show that $T$ defines a bounded linear operator in $\ell^{2}(\Z^{d})$ whose numerical range $W(T)$ contains $W(A)$. We can then choose $\cC$ as the closure of $W(T)$.

We use Fourier series and the convolution theorem to diagonalize the matrix $T$ and to  represent the sesquilinear form defined by the matrix $T^{N}$, compare \eqref{E:Par+conv,series}. 
For $U=(u_{m})_{m\in\omega^{N}}$, we find
\begin{equation}
\label{E:formTN}
 (U,T^{N}U) = (2\pi)^{-d}\int_{Q} F(\tau)\,|\tilde u(\tau)|^{2}\,d\tau\,.
\end{equation}
Here $\tilde u(\tau)=\sum_{m\in\omega^{N}}u_{m}e^{im\cdot \tau}$ and $Q=[-\pi,\pi]^{d}$. $F(\tau)$ is the symbol (characteristic function) of the Toeplitz matrix $T$:
\begin{equation}
\label{E:F(t)}
 F(\tau) = \sum_{m\in\Z^{d},m\ne0}K(m)\,e^{im\cdot \tau}\,.
\end{equation}
The problem is now reduced to the study of the operator of multiplication by the function $F$ in $L^{2}(Q)$.
\begin{lemma}
 \label{L:TbyF}
 The operator $T:\ell^{2}(\Z^{d}) \to \ell^{2}(\Z^{d})$ is bounded if and only if $F\in L^{\infty}(Q)$.\\
 The closure of $W(T)$ is the closed convex hull of the range $\im(F)=\{F(\tau)\mid \tau\in Q\}$ and is also equal to the closure of the union 
 $\bigcup_{N\in\N}W(T^{N})$\,. 
\end{lemma}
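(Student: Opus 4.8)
The plan is to exploit the unitary equivalence between $T$ and a multiplication operator on $L^{2}(Q)$ furnished by the Fourier series transform, and then to read off boundedness, the numerical range, and the finite-section statement from the symbol $F$. First I would set up this equivalence. By Parseval's theorem \eqref{E:Parseval}, the map $\mathcal{U}\colon u\mapsto (2\pi)^{-d/2}\tilde u$ is an isometric isomorphism of $\ell^{2}(\Z^{d})$ onto $L^{2}(Q)$. Applying the identity \eqref{E:Par+conv,series} with $a=b=u$ and $c(m)=K(m)$ for $m\neq0$, $c(0)=0$ (so that $\tilde c=F$), and extending it from finitely supported sequences to all of $\ell^{2}(\Z^{d})$, gives for every $u\in\ell^{2}(\Z^{d})$
\[
(u,Tu)=(2\pi)^{-d}\int_{Q} F(\tau)\,|\tilde u(\tau)|^{2}\,d\tau=\int_{Q} F(\tau)\,|g(\tau)|^{2}\,d\tau,\qquad g=\mathcal{U}u.
\]
That is, $\mathcal{U}T\mathcal{U}^{-1}=M_{F}$, the operator of multiplication by $F$. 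Since $\|M_{F}\|_{\cL(L^{2}(Q))}=\|F\|_{L^{\infty}(Q)}$, the operator $T$ is bounded if and only if $F\in L^{\infty}(Q)$, which settles the first assertion.

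Next, because unitary equivalence preserves the numerical range, $W(T)=W(M_{F})=\{\int_{Q}F|g|^{2}\,d\tau : \|g\|_{L^{2}(Q)}=1\}$, i.e.\ the set of averages of $F$ against the probability densities $|g|^{2}$. I would prove that $\overline{W(M_{F})}$ equals the closed convex hull of $\im(F)$ by two inclusions. For ``$\subseteq$'', each such average lies in the closed convex hull of the essential range of $F$, which coincides with that of $\im(F)$ since $F$ is continuous on the compact set $Q$. For the reverse inclusion, I would show that every value $F(\tau_{0})$ belongs to $\overline{W(M_{F})}$ by concentrating $g$ near $\tau_{0}$: choosing $g$ normalized and supported in a shrinking ball about $\tau_{0}$, continuity of $F$ yields $\int_{Q}F|g|^{2}\,d\tau\to F(\tau_{0})$. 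Hence $\im(F)\subseteq\overline{W(M_{F})}$, and as the numerical range is convex by the Toeplitz–Hausdorff theorem and its closure is closed, $\overline{W(M_{F})}$ contains the closed convex hull of $\im(F)$; the two inclusions give the desired description.

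It remains to identify $\overline{W(T)}$ with $\overline{\bigcup_{N}W(T^{N})}$. The inclusion $\bigcup_{N}W(T^{N})\subseteq W(T)$ is immediate: extending a unit vector of $\ell^{2}(\omega^{N})$ by zero to a unit vector of $\ell^{2}(\Z^{d})$ preserves its Rayleigh quotient, because $T^{N}$ is the principal submatrix of $T$ indexed by $\omega^{N}$. For the reverse inclusion I would use that the finitely supported unit vectors are dense in the unit sphere of $\ell^{2}(\Z^{d})$, so their Rayleigh quotients are dense in $\overline{W(T)}$. Given such a vector $u$ with finite support $S\subset\Z^{d}$, the fact that $\Omega$ is open and nonempty allows me to choose $N$ large and the grid origin $a^{N}$ so that $x^{N}_{m}=a^{N}+m/N\in\Omega$ for every $m\in S$, i.e.\ $S\subseteq\omega^{N}$; then $(u,Tu)=(u,T^{N}u)\in W(T^{N})$. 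Taking closures gives $\overline{W(T)}\subseteq\overline{\bigcup_{N}W(T^{N})}$, hence equality.

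I expect the genuine obstacles to be, first, the reverse inclusion $\overline{\mathrm{conv}}\,\im(F)\subseteq\overline{W(M_{F})}$ through the concentration argument, which relies on continuity of $F$ (or, if one only knows $F\in L^{\infty}$, on an argument at Lebesgue points); and second, the exhaustion property in the finite-section step, namely that any prescribed finite support is contained in some $\omega^{N}$. Both are routine once the unitary equivalence $T\cong M_{F}$ is established, but the exhaustion step is precisely where the geometry of $\Omega$ and the dependence of $\omega^{N}$ on the grid origin $a^{N}$ enter the argument.
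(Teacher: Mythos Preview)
Your argument is correct and follows exactly the route the paper has in mind: the paper's proof is the one-liner ``immediate from \eqref{E:formTN}'', and you have simply unpacked that formula into the unitary equivalence $T\cong M_{F}$ and read off the three assertions from standard facts about multiplication operators and principal submatrices. One small caveat: at this point in the paper $F$ is not yet known to be continuous (and in fact it is discontinuous at $\tau=0$, cf.\ Proposition~\ref{P:NumSym}), so your concentration argument should be phrased for $\tau_{0}\in Q\setminus\{0\}$ or via Lebesgue points, as you yourself note; this does not affect the conclusion since $\{0\}$ is a null set.
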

Proof: This is immediate from \eqref{E:formTN}.

\subsection{Ewald method}\label{SS:Ewald}\ 

The problem that makes the statement $F\in L^{\infty}(Q)$ non trivial is that the Fourier series \eqref{E:F(t)} is not absolutely convergent. The sequence $(K(m))_{m\in\Z^{d}}$ is of order $O(|m|^{-d})$ at infinity and therefore in $\ell^{p}(\Z^{d})$ for all $p>1$, but not for $p=1$. Its membership in $\ell^{2}(\Z^{d})$ implies, for example, that the series converges in the sense of $L^{2}(Q)$. 
The slow convergence of the Fourier series for $F$ makes it also unsuitable for using it in numerical computations to find bounds for $\im(F)$. 

We will use a variant of a method introduced by P.~P.~Ewald~\cite{Ewald1921} in 1921 as a tool to compute slowly converging lattice sums. It has become a routine method for the computation of periodic and quasi-periodic Green functions, with application in numerical electrodynamics and other fields where periodic structures appear. Among the many presentations of the method: Appendix A of the article \cite{Essmann-et-al_1995} or Section 2.13.3 in the book \cite{Ammari-et-al_2018}.

We use it here as a summation method for our slowly converging Fourier series. In our restricted setting it turns out to give surprisingly simple results.

The method introduces a decomposition $K=K^{F}+K^{P}$ for the coefficients and correspondingly $F=F^{F}+F^{P}$ for the Fourier series in such a way that both $K^{F}$ and the Fourier transform $\hat K^{P}$ of $K^{P}$ are exponentially decreasing at infinity, so that both the Fourier series for $F^{F}(\tau)$ and the Poisson sum (compare \eqref{E:Pois}) for $F^{P}(\tau)$ are rapidly convergent, which not only proves the boundedness of $F$, but gives also a fast numerical algorithm for its computation. 

In the literature one often labels the two terms in the decomposition ``spatial'' and ``spectral'' sums, but this is not pertinent to our situation, where the lattice sum runs over the Fourier variable, and the Fourier series runs overs spatial points. So we will use ``Fourier'' and ``Poisson'' sums as labels.

The idea of Ewald's method is to represent $K(x)$ by an integral over Gaussians from $0$ to $\infty$ as we did already in Section~\ref{SSS:Specific} above:
\begin{equation}
\label{E:intK}
 K(x) = p(x)|x|^{-d-2} = 
 \frac{p(x)}{\Gamma(\frac d2+1)}\int_{0}^{\infty} s^{\frac d2}e^{-|x|^{2}s}\,ds
\end{equation}
and then to split the integral at a point $\beta^{2}>0$:
\begin{align}
\label{E:intKF}
K^{F}(x) &= \frac{p(x)}{\Gamma(\frac d2+1)}\int_{\beta^{2}}^{\infty} s^{\frac d2}e^{-|x|^{2}s}\,ds\,,\\
\label{E:intKP}
K^{P}(x) &= \frac{p(x)}{\Gamma(\frac d2+1)}\int_{0}^{\beta^{2}} s^{\frac d2}e^{-|x|^{2}s}\,ds\,.
\end{align}
We see that $K^{F}$ is simply the product of $K$ by a function exponentially decreasing at infinity
\begin{equation}
\label{E:KF=KxGamma}
  K^{F}(x) = K(x) \, \frac{\Gamma(\tfrac d2+1,\beta^{2}|x|^{2})}{\Gamma(\frac d2+1)}
\end{equation}
with the (upper) incomplete Gamma function (see \cite[\S 6.5]{Abramowitz-Stegun_1964})
$$
  \Gamma(a,x)=\int_{x}^{\infty}t^{a-1}e^{-t}\,dt\,.
$$
Therefore 
 $K^{F}(x) = O(|x|^{2}e^{-\beta^{2} |x|^{2}})$ as $|x|\to\infty$\,, and the Fourier series for $F^{F}(\tau)$
\begin{equation}
\label{E:serFF}
 F^{F}(\tau) =  \sum_{m\in\Z^{d},m\ne0}K^{F}(m)\,e^{im\cdot \tau}
\end{equation}
converges rapidly, implying that $F^{F}$ is an analytic function on $\R^{d}/(2\pi\Z)^{d}$.

Consequently, the Fourier series for $F^{P}(\tau)$ converges as slowly as the one for $F(\tau)$, and we use instead the Poisson summation formula \eqref{E:Pois} and write
\begin{equation}
\label{E:serFP}
 F^{P}(\tau) = \sum_{n\in\Z^{d}}\hat K^{P}(\tau+2\pi n)\,.
\end{equation}
We can evaluate $\hat K^{P}$ with the formulas used for $\hat K$ in Lemma~\ref{L:Khat}. As in \eqref{E:KhatGauss} we obtain
\begin{equation}
\begin{aligned}
\label{E:KPhat=KhatxG}
 \hat K^{P}(\xi) &=
 \frac{-\pi^{\frac d2}}{4\Gamma(\frac d2+1)}\int_{0}^{\beta^{2}}\!\!\!\! s^{-1}p(\xi)e^{-\frac{|\xi|^{2}}{4s}}\tfrac{ds}{s}
=   -p(\xi) |\xi|^{-2} \frac{\pi^{\frac d2}}{\frac d2\Gamma(\frac d2)}\int_{\frac{|\xi|^{2}}{4\beta^{2}}}^{\infty} e^{-u}du\\
&= \hat K(\xi)  e^{-\frac{|\xi|^{2}}{4\beta^{2}}}\,.
\end{aligned}
\end{equation}
Therefore we also obtain a very simple form for the Fourier transform, namely that $\hat K^{P}$ is just the symbol of $A$ cut off at infinity, and therefore the series \eqref{E:serFP} converges absolutely and uniformly. At most one term in the sum may be discontinuous, when $\tau+2\pi n=0$, and for $\tau\in Q$ this is the term with $n=0$.
We can summarize the result.
\begin{proposition}
 \label{P:NumSym}
The symbol $F(\tau)$ of the infinite Toeplitz matrix $T$ is a bounded function given for any $\beta>0$ by the exponentially convergent sums
\begin{equation}
\label{E:Fewald}
 F(\tau) = 
 \sum_{m\in\Z^{d},m\ne0}K(m)\tfrac{\Gamma(\tfrac d2+1,\beta^{2}|m|^{2})}{\Gamma(\frac d2+1)}e^{im\cdot \tau} +
 \sum_{n\in\Z^{d}}\hat K(\tau+2\pi n)\,e^{-\frac{|\tau+2\pi n|^{2}}{4\beta^{2}}}\,.
\end{equation}
In the period cube $Q=[-\pi,\pi]^{d}$, it is $C^{\infty}$ outside of $\,0$, and it has the form
\begin{equation}
\label{E:F=F0+Khat} 
 F(\tau) = \hat K(\tau) + F_{0}(\tau) \quad\mbox{ where $F_{0}$ is analytic in $Q$ and $F_{0}(0)=0$}.  
\end{equation}
\end{proposition}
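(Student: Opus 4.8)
The plan is to obtain both assertions by assembling the splitting $F = F^F + F^P$ prepared above, once its legitimacy is checked, so the first task is to dispose of the two pieces separately and read off \eqref{E:Fewald} together with the boundedness of $F$. The series \eqref{E:serFF} for $F^F$ is absolutely convergent because $K^F(m) = O(|m|^2 e^{-\beta^2|m|^2})$ by \eqref{E:KF=KxGamma}; the same Gaussian decay of the coefficients shows that $F^F$ extends to an entire function of $\tau$, so it is bounded and real-analytic on $Q$. For the Poisson part, \eqref{E:KPhat=KhatxG} gives $\hat K^P(\xi) = \hat K(\xi)e^{-|\xi|^2/(4\beta^2)}$, and since $\hat K$ is bounded (Lemma~\ref{L:Khat}) each term of \eqref{E:serFP} is dominated by $\|\hat K\|_{L^\infty}\,e^{-|\tau+2\pi n|^2/(4\beta^2)}$; as the Gaussian lattice sum $\sum_n e^{-|\tau+2\pi n|^2/(4\beta^2)}$ is bounded uniformly in $\tau$, the series \eqref{E:serFP} converges absolutely and uniformly on $Q$, its sum being bounded on $Q$ and continuous on $Q\setminus\{0\}$. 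Adding the two pieces yields the boundedness of $F$ and the representation \eqref{E:Fewald}.

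The step that needs care is the passage from $F^P(\tau) = \sum_{m\ne0}K^P(m)e^{im\cdot\tau}$ to $\sum_n \hat K^P(\tau+2\pi n)$, since $K^P\on{\Z^d}$ is only $O(|m|^{-d})$ and hence not in $\ell^1$. Two facts rescue it. Because $p$ is homogeneous of degree $2$ we have $p(0)=0$, hence $K^P(0)=0$ by \eqref{E:intKP}, so the restricted sum $\sum_{m\ne0}$ equals the full lattice sum $\sum_m$; and although $K^P\on{\Z^d}\notin\ell^1$, the Gaussian factor puts $\hat K^P\in L^1(\R^d)$, so the weak form of the Poisson formula recorded after \eqref{E:Pois} applies. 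The left-hand side $\sum_m K^P(m)e^{im\cdot\tau}$ converges in $L^2(Q)$ (as $K^P\on{\Z^d}\in\ell^2$) and equals the right-hand side $\sum_n\hat K^P(\tau+2\pi n)$; since the latter is continuous on $Q\setminus\{0\}$ by the previous paragraph, \eqref{E:Fewald} holds pointwise there.

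Next I would extract the singularity. Every term in \eqref{E:Fewald} is analytic on $Q$ except the $n=0$ summand $\hat K(\tau)e^{-|\tau|^2/(4\beta^2)}$ of the Poisson part, whose only bad point is $\tau=0$ (for $n\ne0$ one has $\tau+2\pi n\ne0$ for all $\tau\in Q$). Writing $\hat K(\tau)e^{-|\tau|^2/(4\beta^2)} = \hat K(\tau) + \hat K(\tau)\big(e^{-|\tau|^2/(4\beta^2)}-1\big)$ and using $\hat K(\tau)=-\nu_d\,p(\tau)/|\tau|^2$ from Lemma~\ref{L:Khat}, the second term becomes $-\nu_d\,p(\tau)\,\frac{e^{-|\tau|^2/(4\beta^2)}-1}{|\tau|^2}$, i.e.\ $p(\tau)$ times an entire function of $|\tau|^2$; this is real-analytic on $\R^d$ and vanishes at $\tau=0$. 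Defining $F_0$ as the sum of $\hat K(\tau)\big(e^{-|\tau|^2/(4\beta^2)}-1\big)$, of $F^F$, and of the $n\ne0$ Poisson terms then gives $F = \hat K + F_0$ with $F_0$ analytic in $Q$.

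The one genuinely new ingredient, and the main obstacle, is the normalization $F_0(0)=0$. Evaluating the analytic terms at the origin gives $F_0(0) = \sum_{m\ne0}K^F(m) + \sum_{n\ne0}\hat K(2\pi n)e^{-|2\pi n|^2/(4\beta^2)}$. Both are lattice sums of the form $\sum p(m)\,w(|m|)$ with a radial weight: for the first, $K^F(m)=p(m)|m|^{-d-2}\Gamma(\tfrac d2+1,\beta^2|m|^2)/\Gamma(\tfrac d2+1)$, while homogeneity turns the second term into $-\nu_d\,p(n)|n|^{-2}e^{-\pi^2|n|^2/\beta^2}$. By linearity I reduce to the basis polynomials $a_{jk}=x_j^2-x_k^2$ and $b_{jk}=x_jx_k$ of \eqref{E:ajk,bjk}. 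The lattice $\Z^d$ and every radial weight are invariant under the swap $x_j\leftrightarrow x_k$, under which $a_{jk}$ is odd, and under the reflection $x_j\mapsto-x_j$, under which $b_{jk}$ is odd; pairing each $m$ with its image makes both sums vanish term by term. Hence $F_0(0)=0$, and the proof is complete. Apart from this symmetry cancellation, everything reduces to assembling the exponentially convergent sums already prepared in \eqref{E:serFF}--\eqref{E:KPhat=KhatxG}.
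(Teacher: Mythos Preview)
Your proof is correct and follows essentially the same route as the paper: assemble the Ewald splitting $F=F^F+F^P$, handle the missing $m=0$ term via $K^P(0)=p(0)\cdot(\text{const})=0$, isolate the $n=0$ Poisson term to extract $\hat K(\tau)$, and kill the two residual lattice sums at $\tau=0$ by symmetry. The only cosmetic difference is that the paper packages the symmetry step as a separate observation (its Lemma on cubically symmetric sets, grouping the lattice by spheres $\{|m|^2=M\}$), whereas you argue directly by pairing $m$ with its image under a coordinate swap or sign flip; since both sums are absolutely convergent, the two formulations are equivalent.
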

\begin{proof}
We have proved equation \eqref{E:Fewald} above, except for one point: From Poisson's summation formula follows that the Poisson sum \eqref{E:serFP} equals the Fourier series with coefficients $K^{P}(m)$, $m\in\Z^{d}$, including $m=0$. But in the Fourier series \eqref{E:F(t)} defining $F(t)$ as well as in \eqref{E:serFF} defining $F^{F}(t)$, we have excluded $m=0$. So we should compensate for $K^{P}(0)$, which according to \eqref{E:intKP} equals
$$
 K^{P}(0) = \frac{p(0)\beta^{d}}{\Gamma(\frac d2+1)}\,.
$$
Now, since we assumed $p(x)$ to be a homogeneous polynomial of degree $2$, we have $p(0)=0$ and hence no compensation is needed.

Representing $F_{0}$ as
$$
 F_{0}(\tau)=
  \sum_{m\in\Z^{d},m\ne0}K(m)\tfrac{\Gamma(\tfrac d2+1,\beta^{2}|m|^{2})}{\Gamma(\frac d2+1)}e^{im\cdot \tau} +
 \sum_{n\in\Z^{d},n\ne0}\hat K(\tau+2\pi n)e^{-\frac{|\tau+2\pi n|^{2}}{4\beta^{2}}}
 + \hat K(\tau)\big(e^{-\frac{|\tau|^{2}}{4\beta^{2}}}-1\big)\,,
$$
we see immediately that it is analytic. For finding $F_{0}(0)$, we can use the following observation.
\begin{lemma}
 \label{L:cubicsym}
Let $S\subset\R^{d}$ be a finite set that is cubically symmetric, i.\ e.\ invariant under reflections at coordinate planes and under permutations of the coordinates, and let $p$ be a homogeneous polynomial of degree $2$ satisfying the cancellation condition $\int_{\SS^{d-1}}p=0$. Then
$$
 \sum_{x\in S}p(x) = 0.
$$
\end{lemma}
This is immediately clear when $p$ is one of the $a_{jk}$ or $b_{jk}$ from \eqref{E:ajk,bjk}, and it is therefore true for all $p$ satisfying the (spherical) cancellation condition.

For any $M\in\R$, the set $\{m\in\Z^{d}\mid |m|^{2}=M\}$ is either empty or cubically symmetric.
Therefore for $\tau=0$, the two sums in the representation of $F_{0}(\tau)$ are $0$. The last term
$$
 \hat K(\tau)\big(e^{-\frac{|\tau|^{2}}{4\beta^{2}}}-1\big) 
 = -\nu_{d}\,p(\tau)\,\frac{e^{-\frac{|\tau|^{2}}{4\beta^{2}}}-1}{|\tau|^{2}}
$$ 
tends to $\nu_{D}p(0)/(4\beta^{2})=0$ as $\tau\to0$, and hence $F_{0}(0)=0$.
\end{proof}
Poposition~\ref{P:NumSym} implies Theorem~\ref{T:stabgeneral}, where $\cC$ is the closed convex hull of $\im(F)$. The inclusion $W(A)\subset\cC$ is easy to see from \eqref{E:F=F0+Khat}:\\
Given $\epsilon>0$, let $\delta>0$ be such that for $|\tau|<\delta$ we have $|F_{0}(\tau)|<\epsilon$.
Since $\hat K$ is homogeneous of degree zero, it takes all of its values already on the ball $B_{\delta}(0)$ of radius $\delta$. Thus
$$
 \im(\hat K) \subset F(B_{\delta}(0)) + B_{\epsilon}(0) \subset  \im(F) + B_{\epsilon}(0)\,.
$$
Taking convex hulls shows that
$$
 W(A) \subset \cC + B_{\epsilon}(0) \quad\mbox{ for all }\;\epsilon>0\,.
$$

\begin{remark}
 \label{R:simple}
The very simple form of the Ewald representation \eqref{E:Fewald} comes from the very simple form of the Fourier transforms \eqref{E:FKhat} and \eqref{E:Fpexp}, which in turn rely on the cancellation condition \eqref{E:Kvanish}. Now for the kernel $K$ this condition is natural, because it is necessary in order to represent $K$ as a homogeneous distribution and to have a bounded Fourier transform. But for the symbol $\hat K$ it is not as natural. We can add a constant and still have a function homogeneous of degree zero, which will then not satisfy the cancellation condition. An example is $\xi_{j}\xi_{k}|\xi|^{-2}$ for all $j,k$, even for $j=k$.

On the other hand, the representation $K(x)=p(x)|x|^{-d-2}$ may not be the most natural, one may come across cases (see Example~\ref{Ex:5} below) like 
$$K_{jk}(x)=\delta_{jk}|x|^{-d}-d\,x_{j}x_{k}|x|^{-d-2}\,,
$$ 
where for $j=k$ the two terms in the sum do not separately satisfy \eqref{E:Kvanish}. This fits into our framework, however, because
$$
K_{jk}(x)=-d\,b_{jk}(x)|x|^{-d-2}\quad\mbox{ for $j\ne k$, \quad and }\quad
K_{kk}(x)=\sum_{j=1}^{d}a_{jk}(x)|x|^{-d-2}\,.
$$  
If one treats the two terms individually, one may get formulas for Fourier transforms and for the Ewald splitting that are less symmetric than what we presented above.
\end{remark}

\subsection{An integral representation}\label{SS:FInt}\ \nopagebreak

We have another look at the Ewald splitting for the numerical symbol $F(\xi)=F^{F}(\xi)+F^{P}(\xi)$ described in \eqref{E:intKF}--\eqref{E:Fewald}
\begin{align}
\label{E:FFsumint}
 F^{F}(\xi) &= \sum_{m\in\Z^{d}} 
    \frac{p(m)}{\Gamma(\frac d2+1)}\int_{\beta^{2}}^{\infty} s^{\frac d2}e^{-|m|^{2}s}\,ds
    \,e^{im\cdot \xi} \\
\label{E:FPsumint}
 F^{P}(\xi) &= \sum_{n\in\Z^{d}}
    \frac{-\pi^{\frac d2}}{4\Gamma(\frac d2+1)}\int_{0}^{\beta^{2}}\!\!\!\! s^{-2}p(\xi+2\pi n)e^{-\frac{|\xi+2\pi n|^{2}}{4s}}\,ds\,.
\end{align}
These formulas are valid for any $0<\beta<\infty$. All the sums and integrals are converging absolutely here, and therefore we can interchange sums and integrals.
\begin{align}
\label{E:FFintsum}
 F^{F}(\xi) &= \int_{\beta^{2}}^{\infty}\!\!\! H^{F}(\xi,s)\,ds \quad\mbox{ with }\quad
 H^{F}(\xi,s) = \sum_{m\in\Z^{d}} 
    \frac{p(m)}{\Gamma(\frac d2+1)} s^{\frac d2}e^{-|m|^{2}s}
    \,e^{im\cdot \xi}\\
\label{E:FPintsum}
 F^{P}(\xi) &= \int_{0}^{\beta^{2}}\!\!\! H^{P}(\xi,s)\,ds \quad\mbox{ with }\quad
 H^{P}(\xi,s) =
 \sum_{n\in\Z^{d}}
    \frac{-\pi^{\frac d2}p(\xi+2\pi n)}{4\Gamma(\frac d2+1)} s^{-2}e^{-\frac{|\xi+2\pi n|^{2}}{4s}}\,.
\end{align}
From the definition \eqref{E:FFintsum} of $H^{F}$ and the fact that $|m|\ge1$ in the sum follows without difficulty that for any $0<\gamma<1$ there exists a constant $C$ such that
\begin{equation}
\label{E:HFstoinf} |H^{F}(\xi,s)| \le C\,e^{-\gamma s} \quad\mbox{ for all }s\ge1,\; \xi\in\R^{d}.
\end{equation}
To see the behavior of $H^{P}(\xi,s)$  from \eqref{E:FPintsum}, we decompose
$$
 H^{P}(\xi,s) = H_{0}(\xi,s) + H_{1}(\xi,s)
$$
with 
\begin{align}
\label{E:H0} 
 H_{0}(\xi,s) &= -\tfrac{\pi^{\frac d2}}{4\Gamma(\frac d2+1)}
 \!\!\!
  \sum_{n\in\Z^{d},n\ne0} \!\!\!
    p(\xi+2\pi n) s^{-2}e^{-\frac{|\xi+2\pi n|^{2}}{4s}}\,,\\
\label{E:H1}
  H_{1}(\xi,s) &= -\tfrac{\pi^{\frac d2}}{4\Gamma(\frac d2+1)}
     p(\xi) s^{-2}e^{-\frac{|\xi|^{2}}{4s}}\,.
\end{align}
Now we use the fact that for $\xi\in Q$ and $n\ne0$ we have $|\xi+2\pi n|\ge\pi$. 
Therefore for any $\delta<\frac{\pi^{2}}{4}$ there is a constant $C$ such that
\begin{equation}
\label{E:H0at0}
 |H_{0}(\xi,s)| \le C\, e^{-\frac\delta s} \quad\mbox{ for all }0<s\le1,\; \xi\in Q,
\end{equation}
and $H_{0}(\xi,s)$ is analytic in $\xi$ for all $s$.

It remains to analyze the term with $n=0$, i.e. $H_{1}$. It is clear that it vanishes for $\xi=0$, and for every $\xi\ne0$ there exists a constant $C_{\xi}$ and $0<\gamma<\frac{|\xi|^{2}}4$ such that 
\begin{equation}
\label{E:H1alls}
|H_{1}(\xi,s)| \le C_{\xi} \min\{s^{-2}, e^{-\frac\gamma s} \}
\quad\mbox{ for all } s\in(0,\infty)\,.
\end{equation}
Thus $H_{1}(\xi,s)$ is integrable over $s\in(0,\infty)$ for all $\xi$, but there is no uniform bound for $C_{\xi}$: 
Considering $\sup_{s>0}s^{-2}e^{-\frac{|\xi|^{2}}{4s}}$, one sees that 
$C_{\xi}=O(|\xi|^{-2})$ as $\xi\to0$.

Thus we see that $H^{F}$ is integrable as $s\to\infty$ according to \eqref{E:HFstoinf}, and $H^{P}$ is integrable as $s\to0$ according to \eqref{E:H0at0} and \eqref{E:H1alls}, but, because of Poisson's summation formula, they are in fact the same
$$
  H^{F}(\xi,s) = H^{P}(\xi,s)\,,
$$
so we can use all of the above estimates for both of them.
We can summarize
\begin{proposition}
 \label{P:FInt}
 The symbol $F(\xi)$ has the integral representation
\begin{equation}
\label{E:F=intH}
 F(\xi) = \int_{0}^{\infty} H(\xi,s)\,ds\,,
\end{equation}
where $H(\xi,s)$ is given either by the Fourier series $H^{F}$ in \eqref{E:FFintsum} or, equivalently, by the lattice sum $H^{P}$ in \eqref{E:FPintsum}.  
The decomposition $F=F_{0}+\hat K$ in Proposition~\ref{P:NumSym} corresponds to the decomposition $H=H_{0}+H_{1}$ with $H_{0}$ and $H_{1}$ defined in \eqref{E:H0} and \eqref{E:H1}, and there holds
\begin{equation}
\label{E:F01Int}
F_{0}(\xi) = \int_{0}^{\infty} H_{0}(\xi,s)\,ds \qquad
\mbox{ and }\quad
\hat K(\xi) = \int_{0}^{\infty} H_{1}(\xi,s)\,ds \,.
\end{equation}
In these integrals, the functions $s\mapsto H_{0}(\xi,s)$, $s\mapsto H_{1}(\xi,s)$, and $s\mapsto H(\xi,s)$ are integrable on $(0,\infty)$ for any $\xi\in Q$, for any $\xi\in\R^{d}\setminus\{0\}$, and for any $\xi\in Q\setminus\{0\}$, respectively.
\end{proposition}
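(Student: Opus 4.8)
The plan is to build the integral representation by splitting the $s$-integral in the Ewald decomposition $F=F^{F}+F^{P}$ from Proposition~\ref{P:NumSym} and gluing the two halves together. I would start from the iterated forms \eqref{E:FFsumint} and \eqref{E:FPsumint} and first justify exchanging the order of summation and integration in each. In \eqref{E:FFsumint} the Gaussian factor $e^{-|m|^{2}s}$ decays in both $m$ and $s\in[\beta^{2},\infty)$, and in \eqref{E:FPsumint} the factor $e^{-|\xi+2\pi n|^{2}/4s}$ decays in both $n$ and, together with the weight $s^{-2}$, over $s\in(0,\beta^{2}]$; in both cases the resulting double sum-integral converges absolutely, so Tonelli's theorem applies and yields \eqref{E:FFintsum} and \eqref{E:FPintsum} with inner sums $H^{F}$ and $H^{P}$.

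The central point is the pointwise identity $H^{F}(\xi,s)=H^{P}(\xi,s)$ for each fixed $s>0$. This is the Poisson summation formula \eqref{E:Pois} applied to $x\mapsto\frac{p(x)}{\Gamma(\frac d2+1)}s^{\frac d2}e^{-|x|^{2}s}$, whose Fourier transform equals, by \eqref{E:Fpexp}, $\frac{-\pi^{\frac d2}}{4\Gamma(\frac d2+1)}s^{-2}p(\xi)e^{-|\xi|^{2}/4s}$; summing this over the shifts $\xi+2\pi n$ reproduces exactly $H^{P}$, while the lattice sum of the function itself is $H^{F}$. Since the function is a Gaussian times a polynomial, its restriction to $\Z^{d}$ lies in $\ell^{1}$ and its Fourier transform lies in $L^{1}$, so Poisson summation holds in the strong pointwise sense. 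Writing $H:=H^{F}=H^{P}$ and adding the two halves then gives
\begin{equation*}
 F(\xi)=\int_{\beta^{2}}^{\infty}H(\xi,s)\,ds+\int_{0}^{\beta^{2}}H(\xi,s)\,ds=\int_{0}^{\infty}H(\xi,s)\,ds,
\end{equation*}
which is \eqref{E:F=intH} and is manifestly independent of the splitting parameter $\beta$.

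For the refined identities \eqref{E:F01Int} I would evaluate $\int_{0}^{\infty}H_{1}(\xi,s)\,ds$ directly: the substitution $u=|\xi|^{2}/(4s)$ reduces it to the elementary integral already performed in \eqref{E:KhatGauss}, recovering $\hat K(\xi)=-\nu_{d}\,p(\xi)/|\xi|^{2}$. Since $H=H_{0}+H_{1}$ by \eqref{E:H0}--\eqref{E:H1}, subtracting gives $F_{0}=F-\hat K=\int_{0}^{\infty}(H-H_{1})\,ds=\int_{0}^{\infty}H_{0}(\xi,s)\,ds$. The integrability statements then follow by using whichever representation is convenient at each endpoint. Near $s=\infty$ the estimate \eqref{E:HFstoinf} gives exponential decay of $H=H^{F}$, while near $s=0$ I would use $H=H_{0}+H_{1}$, with \eqref{E:H0at0} controlling $H_{0}$ (valid for $\xi\in Q$, where $|\xi+2\pi n|\ge\pi$ when $n\ne0$) and \eqref{E:H1alls} controlling $H_{1}$ (valid for $\xi\ne0$, where $C_{\xi}=O(|\xi|^{-2})$). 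The decay of $H_{0}$ as $s\to\infty$ comes from $H_{0}=H-H_{1}$, since $H$ decays exponentially and $H_{1}(\xi,s)=O(s^{-2})$ is integrable at infinity; and cubic symmetry (Lemma~\ref{L:cubicsym}) gives $H_{0}(0,s)=0$, covering $\xi=0$. This yields integrability of $H_{0}$ on $(0,\infty)$ for $\xi\in Q$, of $H_{1}$ for $\xi\in\R^{d}\setminus\{0\}$, and of $H$ for $\xi\in Q\setminus\{0\}$.

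The only genuinely nontrivial step is the fixed-$s$ identity $H^{F}=H^{P}$: it is what allows the ``Fourier'' representation, well behaved as $s\to\infty$, and the ``Poisson'' representation, well behaved as $s\to0$, to be treated as a single integrand $H$ that is integrable across all of $(0,\infty)$. Everything else is an organized use of absolutely convergent sums, the explicit Gaussian Fourier transform \eqref{E:Fpexp}, and the endpoint estimates already derived in this subsection.
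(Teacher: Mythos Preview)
Your proposal is correct and follows essentially the same route as the paper: the proposition is presented there as a summary of the preceding discussion, which proceeds exactly as you outline---interchange of sum and integral by absolute convergence, the fixed-$s$ Poisson identity $H^{F}=H^{P}$ for the Gaussian-times-polynomial, and the endpoint estimates \eqref{E:HFstoinf}, \eqref{E:H0at0}, \eqref{E:H1alls}. Your explicit verification of the Poisson summation hypotheses and the remark that $H_{0}(0,s)=0$ via Lemma~\ref{L:cubicsym} (equivalently $p(0)=0$) are useful details that the paper leaves implicit.
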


The integral representations \eqref{E:F=intH} and  \eqref{E:F01Int} will be used below to get bounds for the function $F(\xi)$ from estimates for $H(\xi,s)$. The latter will be a consequence of the following observation that can be proved using  Fourier representations \eqref{E:FFintsum} for $H$ and \eqref{E:Fpexp} for $H_{1}$. 

\begin{lemma}
The functions
 \label{L:heat}
$$
 (\xi,s)\mapsto s^{-\frac d2}H_{0}(\xi,s),\quad
 (\xi,s)\mapsto s^{-\frac d2}H_{1}(\xi,s),\quad
 (\xi,s)\mapsto s^{-\frac d2}H(\xi,s),\quad
$$
are solutions of the heat equation 
$$
(\partial_{s}-\Delta_{\xi})u(\xi,s)=0\quad\mbox{ in }\; Q\times(0,\infty).
$$
\end{lemma}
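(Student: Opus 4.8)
The plan is to exploit linearity. Since $H=H_0+H_1$ by \eqref{E:H0}--\eqref{E:H1} and the heat operator $\partial_s-\Delta_\xi$ is linear, it suffices to show that $s^{-\frac d2}H$ and $s^{-\frac d2}H_1$ solve the heat equation; then $s^{-\frac d2}H_0=s^{-\frac d2}H-s^{-\frac d2}H_1$ is automatically a solution. This is precisely why one wants the Fourier representation for $H$ and the Gaussian identity for $H_1$.

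For $H$ I would start from the Fourier series form \eqref{E:FFintsum} and divide by $s^{\frac d2}$ to obtain
\[
 s^{-\frac d2}H(\xi,s) = \frac1{\Gamma(\frac d2+1)}\sum_{m\in\Z^{d}} p(m)\,e^{-|m|^{2} s}\,e^{im\cdot\xi}\,.
\]
Each scalar building block $e^{-|m|^{2}s}e^{im\cdot\xi}$ solves the heat equation: applying $\partial_s$ produces the factor $-|m|^{2}$, while $\Delta_\xi e^{im\cdot\xi}=-|m|^{2}e^{im\cdot\xi}$ produces the same factor, so the two cancel in $(\partial_s-\Delta_\xi)$. Thus every partial sum is a solution. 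Because the factor $e^{-|m|^{2}s}$ decays exponentially in $m$, the series together with all of its $\xi$- and $s$-derivatives converges normally on every slab $Q\times[s_0,\infty)$ with $s_0>0$, so term-by-term differentiation is legitimate and $s^{-\frac d2}H$ solves $(\partial_s-\Delta_\xi)u=0$ on $Q\times(0,\infty)$.

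For $H_1$ I would use the Gaussian heat kernel directly. The function $g(\xi,s)=s^{-\frac d2}e^{-|\xi|^{2}/(4s)}$ is a constant multiple of the fundamental solution of the heat equation, hence solves $(\partial_s-\Delta_\xi)g=0$ for $s>0$ (a one-line verification). By the identity \eqref{E:p(dxi)Gauss}, namely $p(\partial_\xi)e^{-|\xi|^{2}/(4s)}=\frac1{4s^{2}}p(\xi)e^{-|\xi|^{2}/(4s)}$, and comparing with the definition \eqref{E:H1}, one gets
\[
 s^{-\frac d2}H_1(\xi,s) = -\frac{\pi^{\frac d2}}{\Gamma(\frac d2+1)}\,p(\partial_\xi)\,g(\xi,s)\,.
\]
Since $p(\partial_\xi)$ is a constant-coefficient differential operator in the $\xi$ variables alone, it commutes with both $\partial_s$ and $\Delta_\xi$; applying it to the solution $g$ therefore again yields a solution. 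This proves the claim for $H_1$, and combined with the first step it gives $H_0=H-H_1$ as well.

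The computations are all routine, and the genuinely delicate point is the interchange of differentiation and summation in the series for $s^{-\frac d2}H$. This is the one place that requires an estimate, but it is immediate: for $s\ge s_0>0$ the coefficients $p(m)e^{-|m|^{2}s}$, their $s$-derivatives, and the factors $(im)^{\alpha}$ coming from $\xi$-differentiation all carry at most polynomial growth in $m$, which is dominated by $e^{-|m|^{2}s_0}$, yielding the uniform convergence needed to differentiate inside the sum. I expect no other obstacle.
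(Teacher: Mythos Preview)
Your proof is correct and follows exactly the route the paper indicates: the Fourier series representation \eqref{E:FFintsum} for $H$ and the Gaussian identity \eqref{E:Fpexp} (equivalently \eqref{E:p(dxi)Gauss}) for $H_{1}$, with $H_{0}$ obtained by linearity. Your added justification of term-by-term differentiation and the explicit identification $s^{-d/2}H_{1}=-\tfrac{\pi^{d/2}}{\Gamma(d/2+1)}\,p(\partial_{\xi})g$ with $g$ the heat kernel are precisely the details the paper leaves to the reader.
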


\subsection{Matrix-valued kernels}\label{SS:matrix}\ 

Until now, we have considered kernel functions with values in $\C$ and integral operators acting on scalar functions. The generalization to vector-valued functions and matrix-valued kernels is simple and straightforward, and we do not find it necessary to introduce typographic distinctions for the vector-valued objects. The main difference is that in the general theory of Section~\ref{SS:kernels}, one has to use the numerical range $W(K(x))$ of the matrix $K(x)$ instead of the value $K(x)$ in statements such as Lemma~\ref{L:Sp+W(A)} and Proposition~\ref{P:W(AOmega)}. In particular
\begin{equation}
\label{E:NumranGen}
\mbox{$\overline{W(A)}$\; is the closed convex hull of }\;
\bigcup\nolimits_{\xi \in\R^{d}}W(\hat K(\xi))\,.
\end{equation}
Theorem~\ref{T:stabgeneral} remains literally true, but for the construction of the set $\cC$ one has once again to use the numerical range $W(F(t))$ of the matrix-valued function $F$. 
In Lemma~\ref{L:TbyF}, the characterization of the numerical range $W(T)$ is to be understood as follows.
\begin{lemma}
\label{L:TbyFEx5}
 The closure of $W(T)$ is the closed convex hull of $\bigcup_{\tau\in Q}W(F(\tau))$ and is also equal to the closure of the union 
 $\bigcup_{N\in\N}W(T^{N})$.
\end{lemma}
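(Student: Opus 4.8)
The plan is to reduce the statement, exactly as in the scalar Lemma~\ref{L:TbyF}, to the matrix-valued analogue of the form identity \eqref{E:formTN}. Applying the convolution formula \eqref{E:Par+conv,series} componentwise to the vector-valued sequence $U=(u_{m})_{m\in\omega^{N}}$, extended by zero to $\Z^{d}$, and to the matrix symbol $F$, one gets
\[
 (U,T^{N}U) = (2\pi)^{-d}\int_{Q}\bigl(\tilde u(\tau),F(\tau)\,\tilde u(\tau)\bigr)\,d\tau,
 \qquad
 \|U\|^{2}=(2\pi)^{-d}\int_{Q}|\tilde u(\tau)|^{2}\,d\tau,
\]
where $(\cdot,\cdot)$ now denotes the Euclidean inner product of the vector values and $\tilde u(\tau)=\sum_{m\in\omega^{N}}u_{m}e^{im\cdot\tau}$. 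The same identity holds for the infinite operator $T$ with an arbitrary $\tilde u\in L^{2}(Q)$, since $u\mapsto(2\pi)^{-d/2}\tilde u$ is a unitary isomorphism $\ell^{2}(\Z^{d})\to L^{2}(Q)$ by \eqref{E:Parseval}; in particular the boundedness $\sup_{\tau}\|F(\tau)\|<\infty$ from Proposition~\ref{P:NumSym} shows that $T$ is bounded. The identity exhibits the Rayleigh quotient $(U,T^{N}U)/\|U\|^{2}$ as the average of the pointwise quotients $(\tilde u(\tau),F(\tau)\tilde u(\tau))/|\tilde u(\tau)|^{2}$ against the probability measure $|\tilde u|^{2}\,d\tau/\!\int_{Q}|\tilde u|^{2}$. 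Each such pointwise quotient lies in $W(F(\tau))$, hence in the closed convex hull $\cC$ of $\bigcup_{\tau\in Q}W(F(\tau))$, which is compact because $F\in L^{\infty}$; an average of points of a compact convex set stays in it (test against supporting functionals), so $W(T^{N})\subset\cC$ for all $N$ and $W(T)\subset\cC$.

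For the reverse inclusion I would prove $W(F(\tau_{0}))\subset\overline{W(T)}$ for each $\tau_{0}\in Q\setminus\{0\}$ by concentration. Given a unit vector $\eta$, choose $v=\eta\,\mathbf 1_{B_{r}(\tau_{0})}$; since $F$ is continuous at $\tau_{0}\ne0$ by Proposition~\ref{P:NumSym}, the resulting quotient converges to $(\eta,F(\tau_{0})\eta)$ as $r\to0$, so every value of $W(F(\tau_{0}))$ lies in $\overline{W(T)}$. As $W(T)$ is convex by the Toeplitz--Hausdorff theorem, $\overline{W(T)}$ then contains all of $\cC$. The exceptional point $\tau_{0}=0$, where $F$ carries its homogeneous singularity $\hat K$, is harmless: the corresponding limit values $W(\hat K(\omega))$ are recovered as $\tau\to0$ along rays and are absorbed by the closure. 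This gives $\overline{W(T)}=\cC$.

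It remains to identify $\overline{W(T)}$ with $\overline{\bigcup_{N}W(T^{N})}$. By the Toeplitz structure of Section~\ref{SS:Toeplitz}, $T^{N}$ is the compression of $T$ to the finite-dimensional subspace of sequences supported in $\omega^{N}$, so the monotonicity of the numerical range under restriction to subspaces recalled in Section~\ref{SS:kernels} yields $W(T^{N})\subset W(T)$, hence $\overline{\bigcup_{N}W(T^{N})}\subset\overline{W(T)}$. For the converse, finitely supported sequences are dense in $\ell^{2}(\Z^{d})$ and $T$ is bounded, so the Rayleigh quotients of such sequences are dense in $W(T)$. For $u$ with finite support $S$, the translation invariance $t_{m+k,n+k}=t_{mn}$ of the Toeplitz matrix makes its quotient invariant under shifting $S$ by any $k\in\Z^{d}$; and since $\Omega$ is open and nonempty, for $N$ large enough $\omega^{N}$ contains a translate $S+k$, by an argument analogous to the proof of Remark~\ref{R:WAOmega=WA}. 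The shifted sequence lies in $\ell^{2}(\omega^{N})$ and realises the same quotient as an element of $W(T^{N})$, so $W(T)\subset\overline{\bigcup_{N}W(T^{N})}$, completing the chain of equalities.

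I expect the main obstacle to be the reverse inclusion $\cC\subset\overline{W(T)}$: one must carry out the concentration simultaneously in the frequency variable $\tau$ and in the vector direction $\eta$, and then pass from the individual sets $W(F(\tau_{0}))$ to their closed convex hull by invoking the Toeplitz--Hausdorff convexity of $W(T)$, while the accompanying bookkeeping of fitting a translate of the support $S$ into $\omega^{N}$ is the only genuinely new ingredient beyond the scalar case.
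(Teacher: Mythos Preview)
Your argument is correct and follows exactly the route the paper intends: the paper's entire proof is the single line ``From \eqref{E:formTNEx5} one can immediately read the properties of the numerical range stated in Lemma~\ref{L:TbyFEx5},'' and you have simply supplied the details behind that claim---the averaging inclusion, the concentration argument for the reverse inclusion, and the translate-of-support trick for identifying $\overline{W(T)}$ with $\overline{\bigcup_{N}W(T^{N})}$. Nothing is missing or different in spirit.
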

 
The basic Parseval-convolution formula \eqref{E:Par+conv,series} now has to be written, instead of the scalar version \eqref{E:formTN}, as
\begin{equation}
\label{E:formTNEx5}
 (U,T^{N}U) = (U,TU) = (2\pi)^{-d}\int_{Q} \overline{\tilde u(\tau)}^{\top} F(\tau)\,\tilde u(\tau)\,d\tau\,.
\end{equation}
Here $\tilde u(\tau)=\sum_{m\in\omega^{N}}u_{m}e^{im\cdot \tau}$, and $F(\tau)$ is the matrix-valued symbol of the block Toeplitz matrix 
$T=\big(K(m-n)\big)_{m,n\in\Z^{d}}$:
\begin{equation}
\label{E:F(t)Ex5}
 F(\tau) = \sum_{m\in\Z^{d},m\ne0}K(m)\,e^{im\cdot \tau}\,.
\end{equation}
From \eqref{E:formTNEx5} one can immediately read the properties of the numerical range stated in Lemma~\ref{L:TbyFEx5}.

 In this paper, most considered examples of kernels are real-valued and the matrices symmetric, in which case the integral operators are selfadjoint, and the numerical ranges consist of intervals in the real line.

\section{Examples}\label{S:Examples}
\newcounter{Ex}

\refstepcounter{Ex}\label{Ex:1}
\subsection{Example 1. Dimension $d=1$. Finite Hilbert transformation}\label{SS:1d}\ 

We start with the simplest example of a strongly singular integral equation and show that the stability of its delta-delta approximation can be completely analyzed, resulting in a kind of ideal stability theorem.

\subsubsection{The singular integral equation}\label{SS:SIE1D}
Let $a,b\in\R$ with $a<b$. On the interval $\Omega=(a,b)$ we consider the singular integral equation, abbreviated as  $(\lambda\Id-A_{\Omega})u=f$,
\begin{equation}
\label{E:IE1D}
 \lambda u(x) - \frac1{i\pi}\int_{\Omega}\frac{u(y)}{x-y}dy = f(x)\,,\quad x\in\Omega\,.
\end{equation}
The integral is understood in the Cauchy principal value sense. 
The kernel function $K(x)=\frac1{i\pi x}$ has the Fourier transform
$$
 \hat K(\xi)=\sign\,\xi.
$$
The operator $A$ of convolution with $K$ on $\R$ is the Hilbert transformation. It satisfies $A^{2}=\Id$, and its spectrum (in a large class of function spaces, for instance $L^{p}(\R)$ with $1<p<\infty$) is $\{-1,1\}$, consisting of two eigenvalues of infinite multiplicity. 

The finite Hilbert transformation $A_{\Omega}$ and its spectral theory are also well studied classical objects, see for example \cite{KoppelmanPincus1959}. Here the spectrum depends on the function space; for $L^{p}(\Omega)$ it is strongly dependent on $p$, but not on $\Omega$, as long as $\Omega$ is a proper subinterval of $\R$. For $p=2$ one has the following description.
\begin{lemma}
\label{L:SpAOmega}
The finite Hilbert transformation $A_{\Omega}$ is a bounded selfadjoint operator in $L^{2}(\Omega)$, unitarily equivalent to the operator of multiplication by $\sigma$ in $L^{2}(-1,1)$ with $\sigma(\xi)=\xi$.
Both the spectrum $\Sp(A_{\Omega})$ and the closure of the numerical range $\overline{W(A_{\Omega})}$ are equal to 
 $\cC=[-1,1]$.
 For all $\lambda\in\C\setminus\cC$ and any $f\in L^{2}(\Omega)$, the integral equation \eqref{E:IE1D} has a unique solution $u\in L^{2}(\Omega)$, and for the resolvent one has in the $L^{2}(\Omega)$ operator norm
\begin{equation}
\label{E:resAOmega1D}
 \|(\lambda\Id-A_{\Omega})^{-1}\| = \dist(\lambda,\cC)^{-1}\,.
\end{equation}
\end{lemma}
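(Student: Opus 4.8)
The plan is to establish the unitary equivalence between $A_\Omega$ and the multiplication operator $M_\sigma$ on $L^2(-1,1)$, since once this is in hand, every other assertion of the lemma follows from the spectral theory of multiplication operators. The natural tool is the \emph{finite Hilbert transform diagonalization} via the Mellin transform (or, equivalently, a logarithmic change of variables mapping the interval to a line). Concretely, for $\Omega=(a,b)$ I would first reduce to a normalized interval, say $(-1,1)$ or $(0,1)$, by the affine substitution $x\mapsto \frac{2x-(a+b)}{b-a}$; since the kernel $\frac1{i\pi x}$ is homogeneous of degree $-1$, this substitution preserves the operator up to unitary equivalence, so we may assume $\Omega=(0,1)$. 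Then the substitution $x=e^{-t}$ turns $L^2(0,1)$ (with a suitable weight) into $L^2(\R_+)$ or $L^2(\R)$ and turns the finite Hilbert kernel into a convolution kernel on the multiplicative group, whose Mellin symbol can be computed explicitly.

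The key computation is the following. Under the Mellin transform, the finite Hilbert transform $A_\Omega$ becomes (up to a unitary) a multiplication operator whose symbol is obtained from the Mellin transform of $\frac1{i\pi x}$ restricted appropriately. The classical result (see \cite{KoppelmanPincus1959}) is that this symbol, after the identification of the spectral variable $\xi$ running over $(-1,1)$, is precisely $\sigma(\xi)=\xi$. I would invoke this by computing the relevant Mellin integral
$$
 \frac1{i\pi}\int_0^1 \frac{x^{s-1}}{1-x}\,dx
$$
(understood as a principal value) and identifying its value on the critical line $\Re s=\frac12$ with a quantity ranging over $(-1,1)$; the explicit evaluation gives $\tanh$ or $\coth$ of the imaginary part, whose range is exactly the open interval $(-1,1)$. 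This is the step I expect to be the main obstacle: the finite (as opposed to infinite) Hilbert transform does not diagonalize by a single Mellin transform because the interval has two endpoints, so one must handle the boundary contributions carefully and verify that the principal-value kernel transforms into a genuine convolution on the half-line with an integrable (or at least bounded-symbol) structure.

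Granting the unitary equivalence $A_\Omega \cong M_\sigma$ with $\sigma(\xi)=\xi$ on $L^2(-1,1)$, the remaining assertions are routine. The spectrum of a multiplication operator by a continuous function equals the closure of its essential range, here $\overline{\sigma((-1,1))}=[-1,1]$, giving $\Sp(A_\Omega)=[-1,1]$; selfadjointness follows because $\sigma$ is real-valued. For the numerical range, one uses that for a multiplication operator $M_\sigma$ on $L^2$ with $\sigma$ real, $\overline{W(M_\sigma)}$ is the closed convex hull of the essential range, which here is again $[-1,1]$; this is also consistent with the general Lemma~\ref{L:Sp+W(A)} applied after the diagonalization. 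Finally, the sharp resolvent identity \eqref{E:resAOmega1D} with \emph{equality} (not just the inequality of Proposition~\ref{P:W(AOmega)}) holds because for a normal operator the norm of the resolvent equals $\dist(\lambda,\Sp)^{-1}$, and for a multiplication operator $\|(\lambda\Id-M_\sigma)^{-1}\|=\|(\lambda-\sigma)^{-1}\|_{L^\infty}=\dist(\lambda,\overline{\sigma((-1,1))})^{-1}=\dist(\lambda,\cC)^{-1}$. Thus the only substantive work is the diagonalization, and everything else is a direct reading-off from the spectral calculus of multiplication operators.
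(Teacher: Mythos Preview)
The paper does not give a proof of this lemma at all: it treats the diagonalization as a classical fact and simply refers to \cite{KoppelmanPincus1959} (and to \cite{Soehngen1954,Tricomi1957} for resolvent formulas). So there is no ``paper's own proof'' to compare with beyond a citation.

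Your plan correctly separates the work into (a) establishing the unitary equivalence $A_\Omega\cong M_\sigma$ and (b) reading off everything else. Part (b) is fine: once $A_\Omega$ is unitarily equivalent to multiplication by the identity on $L^2(-1,1)$, selfadjointness, $\Sp(A_\Omega)=[-1,1]$, $\overline{W(A_\Omega)}=[-1,1]$, and the sharp resolvent identity all follow exactly as you say.

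The substantive issue is part (a), and the obstacle you flag is real and not bypassed by your sketch. After the affine reduction to $(0,1)$ and the logarithmic substitution $x=e^{-t}$, the domain becomes the half-line $(0,\infty)$ and the kernel becomes a function of $t-s$, but the integration is still only over $s\in(0,\infty)$. What you obtain is therefore a Wiener--Hopf (truncated convolution) operator on $L^2(0,\infty)$, not a full-line convolution, and this is \emph{not} diagonalized by the Fourier/Mellin transform. The Mellin integral you write down, $\frac1{i\pi}\int_0^1\frac{x^{s-1}}{1-x}\,dx$, is the symbol of that Wiener--Hopf operator, not of a multiplication operator unitarily equivalent to $A_\Omega$. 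Passing from a Wiener--Hopf operator with a given symbol to an explicit spectral representation as multiplication by $\xi$ on $L^2(-1,1)$ is exactly the nontrivial content of the Koppelman--Pincus result, and it requires machinery (an explicit diagonalizing transform built from special functions, or the spectral theory of singular integral operators with Cauchy kernel on an arc) that your outline does not supply. In short: your Mellin step only trades one non-diagonal operator for another; to finish you would either have to invoke the cited literature (as the paper does) or carry out a genuinely different construction of the diagonalizing unitary.
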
 
Explicit formulas for the resolvent are known. For the infinite Hilbert transformation this is trivially obtained by algebra:
$$
 (\lambda\Id - A)^{-1} = 
 \frac1{\lambda^{2}-1} (\lambda\Id +  A)\,,
$$
and for the finite Hilbert transformation, formulas for the resolvent can be found for example in \cite{Soehngen1954} or \cite{Tricomi1957}.

\subsubsection{The discrete system}\label{SS:DA1D}
We use the notation of Section \ref{SS:DDA} with $d=1$, in particular $x^{N}_{m}=a^{N}+\frac{m}N$ and $\omega^{N} = \{m\in\Z \mid x^{N}_{m}\in \Omega\}$. The simple delta-delta discretization of our singular integral equation \eqref{E:IE1D} is
\begin{equation}
\label{E:DDA1D}
 \lambda u_{m} - \frac1{i\pi N}\sum_{n\in\omega^{N},m\ne n}\frac{u_{n}}{x^{N}_{m}-x^{N}_{n}}= f(x^{N}_{m}) \,,\quad (m\in\omega^{N})\,.
\end{equation}
The system matrix $T^{N}$ with matrix elements $\frac1{i\pi N}\frac1{x^{N}_{m}-x^{N}_{n}}$ ($m,n\in\omega^{N}$) is a finite section of the infinite Toeplitz matrix
$$
  T = \big( \frac1{i\pi(m-n)} \big)_{m,n\in\Z}\quad
  \mbox{ with zero on the diagonal.}
$$
The symbol $F(\tau)$ is now given by the Fourier series
\begin{equation}
\label{E:F1D}
 F(\tau) = \sum_{m\in\Z,m\ne0}\frac{e^{im\tau}}{i\pi m}
      =  \sum_{m=1}^{\infty} \frac{2\sin m\tau}{\pi m}\,,\quad \tau\in Q=[-\pi,\pi]\,.
\end{equation}
This series converges for all $t\in Q$ to the well known saw-tooth function
\begin{equation}
\label{E:sawtooth}
 F(\tau) = \sign \,\tau - \frac \tau\pi\quad (\tau\ne0)\,,\quad F(0)=0\,.
\end{equation}
The range of this function is the interval $(-1,1)$.

Properties of the matrix $T$ follow immediately from this symbol $F$ and can be summarized as follows.
\begin{lemma}
\label{L:SpT}
The infinite Toeplitz matrix $T$ defines a bounded selfadjoint operator in $\ell^{2}(\Z)$, unitarily equivalent to the operator of multiplication by $F$ in $L^{2}(-\pi,\pi)$ with $F$ given in \eqref{E:sawtooth}.
Both the spectrum $\Sp(T)$ and the closure of the numerical range $\overline{W(T)}$ are equal to 
 $\cC=[-1,1]$.
 For all $\lambda\in\C\setminus\cC$ the operator $\lambda\Id-T$ is invertible in $\ell^{2}(\Z)$, and for the resolvent one has in the $\ell^{2}(\Z)$ operator norm
\begin{equation}
\label{E:resT1D}
 \|(\lambda\Id-T)^{-1}\| = \dist(\lambda,\cC)^{-1}\,.
\end{equation}
\end{lemma}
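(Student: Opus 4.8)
\medskip

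The plan is to reduce the whole statement to the operator of multiplication by the symbol $F$ on $L^{2}(-\pi,\pi)$ and to read off all four assertions from the explicit saw-tooth formula \eqref{E:sawtooth}. First I would invoke the Parseval isometry of \eqref{E:Parseval}, that is, the unitary isomorphism $\mathcal{U}:a\mapsto(2\pi)^{-1/2}\tilde a$ from $\ell^{2}(\Z)$ onto $L^{2}(Q)$ with $Q=[-\pi,\pi]$. Under $\mathcal{U}$ the discrete convolution structure of $T$ (the convolution theorem underlying \eqref{E:Par+conv,series}) conjugates $T$ into the operator $M_{F}$ of multiplication by its symbol $F$. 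Since the series \eqref{E:F1D} has already been summed in \eqref{E:sawtooth} to the saw-tooth function, whose range is the bounded interval $(-1,1)$, we have $F\in L^{\infty}(Q)$; hence by Lemma~\ref{L:TbyF} the operator $T$ is bounded, and $\mathcal{U}T\mathcal{U}^{-1}=M_{F}$ is precisely the asserted unitary equivalence.

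Next, selfadjointness is immediate: the symbol $F$ in \eqref{E:sawtooth} is real-valued (equivalently, the convolution sequence $c(m)=1/(i\pi m)$ satisfies $\overline{c(-m)}=c(m)$), so $M_{F}$, and therefore $T$, is selfadjoint. For the spectrum I would use the standard identity that the spectrum of a bounded multiplication operator equals the essential range of its multiplier. Because $F$ is continuous and strictly monotone on each of $(-\pi,0)$ and $(0,\pi)$, every value in $(-1,1)$ is attained on a set of positive measure, while the one-sided limits at $\tau=0$ supply the endpoints $\pm1$; thus the essential range is the full closed interval $[-1,1]=\cC$ and $\Sp(T)=\cC$. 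For the numerical range I would invoke directly the second statement of Lemma~\ref{L:TbyF}: $\overline{W(T)}$ is the closed convex hull of $\im(F)=(-1,1)$, which is again $[-1,1]=\cC$.

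For the resolvent identity \eqref{E:resT1D} I would use that, for $\lambda\notin\cC$, the inverse $(\lambda\Id-M_{F})^{-1}$ is multiplication by $1/(\lambda-F)$, whose operator norm equals $\|1/(\lambda-F)\|_{L^{\infty}(Q)}=1/\operatorname{ess\,inf}_{\tau}|\lambda-F(\tau)|=\dist(\lambda,[-1,1])^{-1}$. Transporting this back through $\mathcal{U}$ yields the claimed \emph{equality} (not merely the inequality of Proposition~\ref{P:W(AOmega)}), which expresses the fact that the selfadjoint operator $T$ has resolvent norm exactly equal to the reciprocal distance to its spectrum.

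I expect no serious obstacle here once the symbol is known: the genuine difficulty of the problem---summing the conditionally (and not absolutely) convergent Fourier series \eqref{E:F1D} and showing that the resulting symbol is bounded---has already been settled by the closed form \eqref{E:sawtooth}. The only points needing a little care are the passage from the range of $F$ to its essential range in the spectrum computation (handled by the continuity and strict monotonicity of the saw-tooth on each half-period, which guarantee positive-measure preimages) and the verification that the lone discontinuity at $\tau=0$ contributes only the two limit values $\pm1$, enlarging neither the spectrum nor the numerical range beyond $[-1,1]$.
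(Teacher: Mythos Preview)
Your proposal is correct and matches the paper's approach exactly: the paper simply states that the properties ``follow immediately from this symbol $F$'' and gives no further argument, so what you have written is precisely the spelling-out of that immediacy via the unitary equivalence with the multiplication operator $M_{F}$. One small wording slip worth fixing: strict monotonicity of the saw-tooth on each half-interval means that each value in $(-1,1)$ is attained at a \emph{single} point, not on a set of positive measure; what you actually need for the essential-range computation (and what does hold) is that every \emph{neighborhood} of every point in $[-1,1]$ has a preimage of positive measure, which follows from continuity of $F$ on $(-\pi,0)\cup(0,\pi)$ together with $\overline{\im(F)}=[-1,1]$.
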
 
\begin{corollary}
 \label{C:StabTN1D}
 The matrix $T^{N}$ of the system \eqref{E:DDA1D} is selfadjoint with its eigenvalues in $\cC=[-1,1]$.  For $\lambda\in\C\setminus\cC$, there is a uniform resolvent estimate in the $\ell^{2}$ operator norm
\begin{equation}
 \label{E:resTN1D}
 \|(\lambda\Id-T^{N})^{-1}\| \le \dist(\lambda,\cC)^{-1}\,.
\end{equation}
\end{corollary}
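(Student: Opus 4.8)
The plan is to derive Corollary~\ref{C:StabTN1D} entirely from Lemma~\ref{L:SpT} together with the monotonicity of the numerical range under passage to finite sections, exactly as in the general framework of Section~\ref{SS:Toeplitz}. The point is that $T^{N}$ is the compression of the infinite Toeplitz operator $T$ to the finite coordinate subspace indexed by $\omega^{N}$, so it inherits its selfadjointness and its numerical-range confinement, with constants independent of $N$.

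First I would record the compression structure precisely. Let $P^{N}:\ell^{2}(\Z)\to\ell^{2}(\Z)$ be the orthogonal projection onto the span of the standard basis vectors $\{e_{m}\mid m\in\omega^{N}\}$, and identify $\ell^{2}(\omega^{N})$ with the range of $P^{N}$. Then $T^{N}=P^{N}TP^{N}$ on that range. Since $T$ is selfadjoint by Lemma~\ref{L:SpT}, its compression $T^{N}$ is selfadjoint as well; being finite-dimensional it is a Hermitian matrix, with real eigenvalues, and its numerical range $W(T^{N})$ is the closed real interval $[\lambda_{\min}(T^{N}),\lambda_{\max}(T^{N})]$.

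Next I would establish the containment $W(T^{N})\subset\overline{W(T)}=\cC=[-1,1]$, which is the $d=1$ instance of Lemma~\ref{L:TbyF}. This is immediate from the compression: for any $U\in\ell^{2}(\omega^{N})$ with $\|U\|=1$, extending $U$ by zero to $\tilde U\in\ell^{2}(\Z)$ of the same norm gives $(U,T^{N}U)=(\tilde U,T\tilde U)\in W(T)$. In particular every eigenvalue of $T^{N}$ lies in $[-1,1]$, which is the first assertion of the corollary. The uniform resolvent estimate then follows from the general numerical-range bound recalled just before Proposition~\ref{P:W(AOmega)}: for $\lambda\in\C\setminus\cC$, since $W(T^{N})\subset\cC$ we have $\dist(\lambda,W(T^{N}))\ge\dist(\lambda,\cC)$, whence
$$
\|(\lambda\Id-T^{N})^{-1}\|\le\dist(\lambda,W(T^{N}))^{-1}\le\dist(\lambda,\cC)^{-1}.
$$
Because the right-hand side does not involve $N$, this is precisely the claimed uniform bound; for the Hermitian matrix $T^{N}$ one may equivalently use the identity $\|(\lambda\Id-T^{N})^{-1}\|=\dist(\lambda,\Sp(T^{N}))^{-1}$ together with $\Sp(T^{N})\subset[-1,1]$.

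As for difficulties, there is essentially no obstacle here, which is exactly why the statement is a corollary rather than a theorem: all the work sits in Lemma~\ref{L:SpT}, where the explicit saw-tooth symbol \eqref{E:sawtooth} pins down $\overline{W(T)}=[-1,1]$. The single substantive point to keep in view is uniformity in $N$, and this is guaranteed automatically because the enclosing set $\cC$ arises from the one fixed infinite symbol $F$ and is therefore the same for every section size.
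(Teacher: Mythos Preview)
Your proof is correct and follows exactly the reasoning the paper intends: the corollary is stated without proof, as an immediate consequence of Lemma~\ref{L:SpT} via the numerical-range inclusion $W(T^{N})\subset\overline{W(T)}=[-1,1]$ for finite sections (Lemma~\ref{L:TbyF}). You have simply spelled out the compression argument that the paper leaves implicit.
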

The converse is also true: If there is a uniform stability estimate
$$
 \|(\lambda\Id-T^{N})^{-1}\| \le C \quad\mbox{ for all $N$ },
$$
then one also has (by a standard Galerkin argument)
$\|(\lambda\Id-T)^{-1}\|\le C$, hence $\dist(\lambda,\cC)\ge\frac1C$ and $\lambda\not\in\cC$.
Combining this with Lemma~\ref{L:SpAOmega}, we obtain the following description of the stability result for our delta-delta discretisation of the finite Hilbert transform.
\begin{theorem}
 \label{T:Stab1D}
 For $\lambda\in\C$ the following are equivalent:\\
 (i) The singular integral equation \eqref{E:IE1D} has a unique solution $u\in L^{2}(\Omega)$ for any $f\in L^{2}(\Omega)$.\\
 (ii) The discretization method \eqref{E:DDA1D} is stable in the $\ell^{2}$ norm.\\
 (iii) $\lambda\not\in\cC$, where $\cC=[-1,1]$.\\
 For such $\lambda$, there is an estimate for the operator norms 
 \begin{equation}
\label{E:Res1D}
 \|(\lambda\Id-T^{N})^{-1}\|_{\cL(\ell^{2}(\omega^{N}))} \le \|(\lambda\Id-A_{\Omega})^{-1}\|_{\cL(L^{2}(\Omega))}\,.
\end{equation}
\end{theorem}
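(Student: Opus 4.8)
The plan is to establish Theorem~\ref{T:Stab1D} as a synthesis of the three spectral descriptions already in hand, by showing separately that each of (i) and (ii) is equivalent to the explicit condition (iii), and then reading off \eqref{E:Res1D} from the sharp resolvent identities. The entire statement hinges on the fortunate coincidence, visible in Lemma~\ref{L:SpAOmega} and Lemma~\ref{L:SpT}, that the continuous operator $A_\Omega$ and the infinite Toeplitz operator $T$ have exactly the same spectrum $\cC=[-1,1]$, even though $A_\Omega$ is unitarily equivalent to multiplication by $\sigma(\xi)=\xi$ on $L^2(-1,1)$ while $T$ is unitarily equivalent to multiplication by the sawtooth symbol $F$ on $L^2(-\pi,\pi)$. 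This coincidence is special to $d=1$ and is exactly what fails in the higher-dimensional examples.

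For (i) $\Leftrightarrow$ (iii) I would simply observe that unique solvability of \eqref{E:IE1D} for every right-hand side is by definition the bijectivity of $\lambda\Id-A_\Omega$ on $L^2(\Omega)$, i.e.\ the condition $\lambda\notin\Sp(A_\Omega)$; since Lemma~\ref{L:SpAOmega} gives $\Sp(A_\Omega)=\cC$, this is precisely $\lambda\notin\cC$.

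For (iii) $\Rightarrow$ (ii), stability---uniform boundedness of the inverses $(\lambda\Id-T^N)^{-1}$---is exactly the content of Corollary~\ref{C:StabTN1D}, which furnishes the $N$-independent bound $\dist(\lambda,\cC)^{-1}$. The converse (ii) $\Rightarrow$ (iii) is the one step needing genuine care, namely the finite-section (Galerkin) argument already sketched after that corollary. I would make the compression structure explicit: with $P_N$ the orthogonal projection of $\ell^2(\Z)$ onto the coordinates indexed by $\omega^N$, one has $T^N=P_NTP_N$, so that for any $v$ supported in $\omega^N$ one gets $(\lambda\Id-T^N)v=P_N(\lambda\Id-T)v$ and hence $\|(\lambda\Id-T^N)v\|\le\|(\lambda\Id-T)v\|$. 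A uniform bound $\|(\lambda\Id-T^N)^{-1}\|\le C$ means $\|(\lambda\Id-T^N)v\|\ge\tfrac1C\|v\|$ on the range of $P_N$; combining this with the previous inequality and using the translation invariance of $T$ to shift any finitely supported vector into $\omega^N$ for $N$ large (legitimate because $\omega^N$ is an integer interval of growing length), I obtain $\|(\lambda\Id-T)v\|\ge\tfrac1C\|v\|$ first on finitely supported $v$ and then, by density, on all of $\ell^2(\Z)$. Since $T$ is selfadjoint this lower bound forces $\lambda\Id-T$ to be invertible, so $\lambda\notin\Sp(T)=\cC$ by Lemma~\ref{L:SpT}. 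I expect the only real obstacle to be the bookkeeping here---checking carefully that $T^N$ really is the compression $P_NTP_N$ and that the translation step is clean; the passage from ``bounded below'' to ``invertible'' is automatic from selfadjointness.

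Finally, for \eqref{E:Res1D} I would combine the two sharp facts directly: Corollary~\ref{C:StabTN1D} gives $\|(\lambda\Id-T^N)^{-1}\|\le\dist(\lambda,\cC)^{-1}$, while the equality in Lemma~\ref{L:SpAOmega}---a consequence of the normality of $A_\Omega$---gives $\|(\lambda\Id-A_\Omega)^{-1}\|=\dist(\lambda,\cC)^{-1}$, and chaining the two yields \eqref{E:Res1D} at once. I would point out that the inequality (rather than equality) on the discrete side reflects that for each fixed $N$ the set $\overline{W(T^N)}$ is only a proper subinterval of $\cC$, even though the union over all $N$ fills $\cC$ by Lemma~\ref{L:TbyF}.
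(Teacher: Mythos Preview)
Your proposal is correct and follows essentially the same approach as the paper: the equivalences (i)$\Leftrightarrow$(iii) and (iii)$\Rightarrow$(ii) are read off from Lemma~\ref{L:SpAOmega} and Corollary~\ref{C:StabTN1D}, (ii)$\Rightarrow$(iii) is the ``standard Galerkin argument'' (which the paper names without details and you spell out via the compression $T^N=P_NTP_N$ and selfadjointness of $T$), and \eqref{E:Res1D} is obtained by chaining the inequality \eqref{E:resTN1D} with the equality \eqref{E:resAOmega1D}. Your extra bookkeeping on the translation step and the passage from ``bounded below'' to ``invertible'' is sound and simply makes explicit what the paper leaves implicit.
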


\refstepcounter{Ex}\label{Ex:2}
\subsection{Example 2. Dimension $d=2$, kernel $x_{1}x_{2}|x|^{-4}$}\label{SS:2db12}\ 

We consider now the simplest higher-dimensional example where in the notation of Section~\ref{SSS:Specific} $d=2$ and $p(x)=-\frac1\pi b_{12}(x)$, see \eqref{E:ajk,bjk}. 
We show that the stability of its delta-delta approximation follows a similar simple pattern as in the previous one-dimensional example, although the proof is non-trivial. 

\subsubsection{The singular integral equation}
The kernel and its Fourier transform are
\begin{equation}
\label{E:KandKhatforb}
 K(x) = -\frac{x_{1}x_{2}}{\pi|x|^{4}}\,,\qquad
 \hat K(\xi) = \frac{\xi_{1}\xi_{2}}{|\xi|^{2}}\,.
\end{equation}
For $\Omega\subset\R^{2}$, we consider the singular integral equation 
$(\lambda \Id -A_{\Omega})u=f$ as in \eqref{E:SIE}
\begin{equation}
\label{E:IEEx2}
\lambda u(x) - \pv\!\! \int_{\Omega}K(x-y) u(y)\,dy = f(x).
\end{equation}
Observing that the range of the function $\hat K$ is the interval 
$[-\frac12,\frac12]$, we can formulate the result of Proposition~\ref{P:W(AOmega)} as follows
\begin{lemma}
 \label{L:AOmegaEx2}
Let $\cC=[-\frac12,\frac12]$. For $\Omega=\R^{2}$, both the spectrum $\Sp(A_{\Omega})$ and the closure of the numerical range $\overline{W(A_{\Omega})}$ in $L^{2}(\Omega)$ are equal to $\cC$. 
For any open subset $\Omega\subset\R^{2}$, the closure of the numerical range 
in $L^{2}(\Omega)$ satisfies $\overline{W(A_{\Omega})}\subset\cC$, and there is a resolvent estimate in the $L^{2}(\Omega)$ operator norm
\begin{equation}
\label{E:resAOmega2DEx2}
 \|(\lambda\Id-A_{\Omega})^{-1}\| \le \dist(\lambda,\cC)^{-1}\,.
\end{equation}
\end{lemma}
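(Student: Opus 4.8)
The plan is to recognize this statement as a direct instantiation of the general machinery developed in Section~\ref{SS:kernels}, specialized to the kernel \eqref{E:KandKhatforb}. The only genuine computation needed is to verify that the range of the symbol $\hat K(\xi)=\xi_{1}\xi_{2}/|\xi|^{2}$ is exactly the interval $\cC=[-\frac12,\frac12]$; once this is known, every assertion in the lemma follows mechanically from Lemma~\ref{L:Sp+W(A)}, Remark~\ref{R:WAOmega=WA}, and Proposition~\ref{P:W(AOmega)}.

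First I would determine $\im(\hat K)$. Since $\hat K$ is homogeneous of degree zero, it suffices to restrict $\xi$ to the unit circle and write $\xi=(\cos\theta,\sin\theta)$, so that $\hat K(\xi)=\cos\theta\sin\theta=\tfrac12\sin(2\theta)$. As $\theta$ ranges over $[0,2\pi)$ this takes exactly all values in $[-\tfrac12,\tfrac12]$, hence $\im(\hat K)=\cC$. Because $\cC$ is already an interval of the real line, it is its own convex hull.

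With this in hand the remaining steps are immediate. For $\Omega=\R^{2}$, Lemma~\ref{L:Sp+W(A)} gives $\Sp(A_{\Omega})=\im(\hat K)=\cC$ and $\overline{W(A_{\Omega})}=\operatorname{conv}(\im(\hat K))=\cC$. For a general open $\Omega\subset\R^{2}$, the inclusion $W(A_{\Omega})\subset W(A)$ from the monotonicity of the numerical range under restriction to subspaces yields $\overline{W(A_{\Omega})}\subset\overline{W(A)}=\cC$ (in fact Remark~\ref{R:WAOmega=WA} shows equality, but only the inclusion is asserted here). The resolvent estimate \eqref{E:resAOmega2DEx2} is then exactly the conclusion of Proposition~\ref{P:W(AOmega)} applied with this closed convex set $\cC$: for every $\lambda\notin\cC$ the equation has a unique solution and $\|(\lambda\Id-A_{\Omega})^{-1}\|\le\dist(\lambda,\cC)^{-1}$.

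There is essentially no obstacle here; the lemma is a worked-out special case rather than a new result. The one point requiring mild care is confirming that the kernel \eqref{E:KandKhatforb} actually fits the hypotheses of the general theory---namely that $p(x)=-\tfrac1\pi x_{1}x_{2}$ is a homogeneous polynomial of degree $2$ satisfying the spherical cancellation condition \eqref{E:Kvanish}, so that $K$ defines a bona fide homogeneous distribution with bounded symbol, and that $\hat K$ has the stated form. The cancellation $\int_{\SS^{1}}x_{1}x_{2}\,ds=0$ holds by the odd symmetry in each variable, and the formula for $\hat K$ is the case $d=2$, $p(x)=-\tfrac1\pi x_{1}x_{2}$ of Lemma~\ref{L:Khat}, using $\nu_{2}=\pi$. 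Once these verifications are noted, the proof is complete.
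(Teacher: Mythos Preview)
Your proof is correct and follows exactly the approach of the paper, which simply notes that the range of $\hat K$ is $[-\tfrac12,\tfrac12]$ and then invokes Proposition~\ref{P:W(AOmega)} (together with Lemma~\ref{L:Sp+W(A)}). Your write-up is in fact more detailed than the paper's, which treats the lemma as an immediate instantiation and gives no separate proof.
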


\subsubsection{The discrete system}\label{SS:DA2dEx2}
Let  now $\Omega$ be a bounded domain in $\R^{2}$.
In the notation of Section \ref{SS:DDD} with $d=2$, the regular grid consists of the points $x^{N}_{m}=a^{N}+\frac{m}N$, indexed by $\omega^{N} = \{m\in\Z^{2} \mid x^{N}_{m}\in \Omega\}$. The simple delta-delta discretization of our singular integral equation \eqref{E:IEEx2} is
\begin{equation}
\label{E:DDA2dEx2}
 \lambda u_{m} + \frac{1}{\pi} N^{-2}
 \sum_{n\in\omega^{N},m\ne n} 
 \frac{(x^{N}_{m,1}-x^{N}_{n,1})(x^{N}_{m,2}-x^{N}_{n,2})}{|x^{N}_{m}-x^{N}_{n}|^{4}}
  u_{n}
 = f(x^{N}_{m}) \,,\quad (m\in\omega^{N})\,.
\end{equation}
The system matrix $T^{N}$  is now a finite section of the infinite Toeplitz matrix
$$
  T = -\frac1\pi \Big(  
  \frac{(m_{1}-n_{1})(m_{2}-n_{2})}{|m-n|^{4}}
  \Big)_{m,n\in\Z^{2}}\quad
  \mbox{ with zero on the diagonal.}
$$
Its symbol is therefore given by the Fourier series for $\tau\in Q=[-\pi,\pi]^{2}$
\begin{equation}
\label{E:FEx2}
 F(\tau) = -\sum_{m\in\Z^{2},m\ne0}\frac{m_{1}m_{2}}{\pi|m|^{4}}\,e^{im\cdot \tau}
  = \frac4\pi \sum_{m_{1},m_{2}=1}^{\infty}
    \frac{m_{1}m_{2}}{(m_{1}^{2}+m_{2}^{2})^{2}}
    \sin(m_{1}\tau_{1}) \sin(m_{2}\tau_{2})
        \,.
\end{equation}
Whereas we do not know an explicit closed form expression for this function, we know from the results of Section~\ref{SS:Ewald} using Ewald's method that it is bounded and that it can be written as in equation~\eqref{E:F=F0+Khat}
\begin{equation}
\label{E:F=F0+KhatEx2} 
 F(\tau) = \hat K(\tau) + F_{0}(\tau) \quad
  \mbox{ where $F_{0}$ is analytic in $Q$ and $F_{0}(0)=0$}.  
\end{equation}
In addition, we know from \eqref{E:FEx2} that $F$ vanishes on the boundary of $Q$, hence
\begin{equation}
\label{E:F0boundary}
 F_{0}(\tau) = -\hat K(\tau) \qquad (\tau\in\partial Q)\,.
\end{equation}
In the previous example, we used the explicit expression of $F(\tau)$ for finding the range of $F$. In fact, the function $F_{0}$ in that case was just the linear interpolation between the two values of the symbol $\hat K$ on $\partial Q$, which implied that the closed convex hull of $\im(F)$ was the same as the convex hull of $\im(\hat K)$. In the present case, we do not have a simple formula, but we can still prove that the conclusion is true.
\begin{lemma}
 \label{L:im(Fex2)}
 Let $F(\tau)$ be as defined in \eqref{E:FEx2}. Then for any $\tau\in Q$
\begin{equation}
\label{E:Fpos}
  F(\tau) \in \cC=[-\frac12,\frac12].
\end{equation}
\end{lemma}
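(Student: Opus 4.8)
The plan is to exploit the product structure that the polynomial $p(x)=-\tfrac1\pi x_1x_2$ induces in the integral representation of Proposition~\ref{P:FInt}, to reduce the two-variable bound to a one-variable (diagonal) bound by Cauchy--Schwarz, and then to settle the diagonal bound by a parabolic maximum principle.

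First I would specialize the Fourier form \eqref{E:FFintsum} to $d=2$, $p(m)=-\tfrac1\pi m_1m_2$, $\Gamma(\tfrac d2+1)=1$. Since $e^{-|m|^2s}m_1m_2=(m_1e^{-m_1^2s})(m_2e^{-m_2^2s})$, the lattice sum factorizes, and Proposition~\ref{P:FInt} gives
\[
 F(\tau)=\frac1\pi\int_0^\infty s\,\phi(\tau_1,s)\,\phi(\tau_2,s)\,ds,
 \qquad
 \phi(t,s)=-2\sum_{m\ge1}m\sin(mt)\,e^{-m^2s}.
\]
Here $\phi(\cdot,s)$ is the $t$-derivative of the periodic heat kernel $\sum_m e^{-m^2s}e^{imt}$; in particular $\tfrac1\pi\phi(\tau_1,s)\phi(\tau_2,s)=s^{-1}H(\tau,s)$ solves the heat equation by Lemma~\ref{L:heat}, each factor $\phi(\cdot,s)$ solves the one-dimensional heat equation, and $\phi(t,s)<0$ for $t\in(0,\pi)$ because the periodic heat kernel is unimodal with maximum at $t=0$.

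Next, applying Cauchy--Schwarz in $L^2\big((0,\infty),s\,ds\big)$ gives, for all $\tau\in Q$,
\[
 |F(\tau_1,\tau_2)|\le
 \Big(\tfrac1\pi\!\int_0^\infty\! s\,\phi(\tau_1,s)^2\,ds\Big)^{1/2}
 \Big(\tfrac1\pi\!\int_0^\infty\! s\,\phi(\tau_2,s)^2\,ds\Big)^{1/2}
 =\sqrt{F(\tau_1,\tau_1)}\,\sqrt{F(\tau_2,\tau_2)},
\]
and $F(t,t)=\tfrac1\pi\int_0^\infty s\,\phi(t,s)^2\,ds\ge0$. Hence it suffices to prove the \emph{diagonal bound} $F(t,t)\le\tfrac12$ for $t\in(0,\pi)$, the cases $t=0,\pi$ giving $F=0$.

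For the diagonal bound I would use the Poisson form $\phi=\sum_{n\in\Z}\phi_n$ with $\phi_n(t,s)=-\tfrac{\sqrt\pi}2\,s^{-3/2}(t+2\pi n)e^{-(t+2\pi n)^2/(4s)}$ and isolate the central term $\phi_0$. A direct computation gives $\tfrac1\pi\int_0^\infty s\,\phi_0(t,s)^2\,ds=\tfrac12$ for every $t\ne0$, so the claim reduces to the pointwise comparison $|\phi(t,s)|\le|\phi_0(t,s)|$ on $(0,\pi)\times(0,\infty)$. This is the main obstacle: the difference $\phi-\phi_0=\sum_{n\ne0}\phi_n$ has delicate sign cancellation (its defining sum is not absolutely summable after integration), so a termwise estimate fails. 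I would bypass this by observing that $w:=\phi-\phi_0$ again solves $\partial_sw=\partial_t^2w$ on $(0,\pi)\times(0,\infty)$ (both $\phi$ and the shifted heat-kernel derivatives $\phi_n$ do), that $w$ extends continuously to the parabolic boundary with $w(0,s)=0$, $w(\pi,s)>0$, and $w(t,0^+)=0$ for $t\in(0,\pi)$, and that $w$ is bounded; the initial and boundary behaviour follows from $|t+2\pi n|\ge\pi$ for $n\ne0$, $t\in(0,\pi)$, exactly the decay already quantified in \eqref{E:H0at0}. By the minimum principle for the heat equation, $w\ge0$, i.e.\ $\phi\ge\phi_0$; combined with $\phi_0<0$ and $\phi\le0$ on $(0,\pi)$ this yields $|\phi|\le|\phi_0|$, hence $F(t,t)\le\tfrac12$, completing the proof. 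The only genuine work beyond routine verification lies in justifying the continuity and boundedness of $w$ up to the parabolic boundary so that the maximum principle applies.
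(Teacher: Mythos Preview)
Your argument is correct. The factorization of $H(\tau,s)$ into a product in $\tau_1$ and $\tau_2$, the Cauchy--Schwarz reduction to the diagonal, the computation $\tfrac1\pi\int_0^\infty s\,\phi_0(t,s)^2\,ds=\tfrac12$, and the maximum-principle argument for $w=\phi-\phi_0$ all check out. The one external ingredient you invoke is the unimodality of the periodic heat kernel (equivalently $\phi\le0$ on $(0,\pi)$); this is indeed a classical fact about the Jacobi theta function, so citing it is legitimate.

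Your route is genuinely different from the paper's. The paper works directly in two variables: it applies the parabolic maximum principle on $Q_{++}\times(0,\infty)$ to $s^{-1}H_0$ (getting $F_0\le0$, hence $F\le\hat K\le\tfrac12$) and, via a limiting argument, to $s^{-1}H$ (getting $F\ge0$). You instead exploit the product structure $p(m)=-\tfrac1\pi m_1m_2$ to factor $H(\tau,s)=\tfrac s\pi\,\phi(\tau_1,s)\phi(\tau_2,s)$ and reduce everything to a one-dimensional problem via Cauchy--Schwarz. What your approach buys is a conceptually cleaner one-variable picture and an elegant use of the inner-product structure; what the paper's approach buys is that it stays within the general framework set up in Section~\ref{S:discrete} (no special factorization is needed), it avoids appealing to unimodality as a separate input, and it yields the slightly stronger pointwise statements $F\ge0$ and $F_0\le0$ on all of $Q_{++}$ in one stroke. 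Note, incidentally, that on the diagonal the two arguments coincide: $H_0((t,t),s)=\tfrac s\pi(\phi^2-\phi_0^2)$, so your bound $|\phi|\le|\phi_0|$ is exactly the diagonal case of the paper's $H_0\le0$.
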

The proof is not obvious, although the claim is numerically evident if we compute $F$ using Ewald's method and plot its graph, see the contour plot in Figure~\ref{F:ContEx2}.

\begin{figure}[h]
\centering
\includegraphics[width=0.6\textwidth]{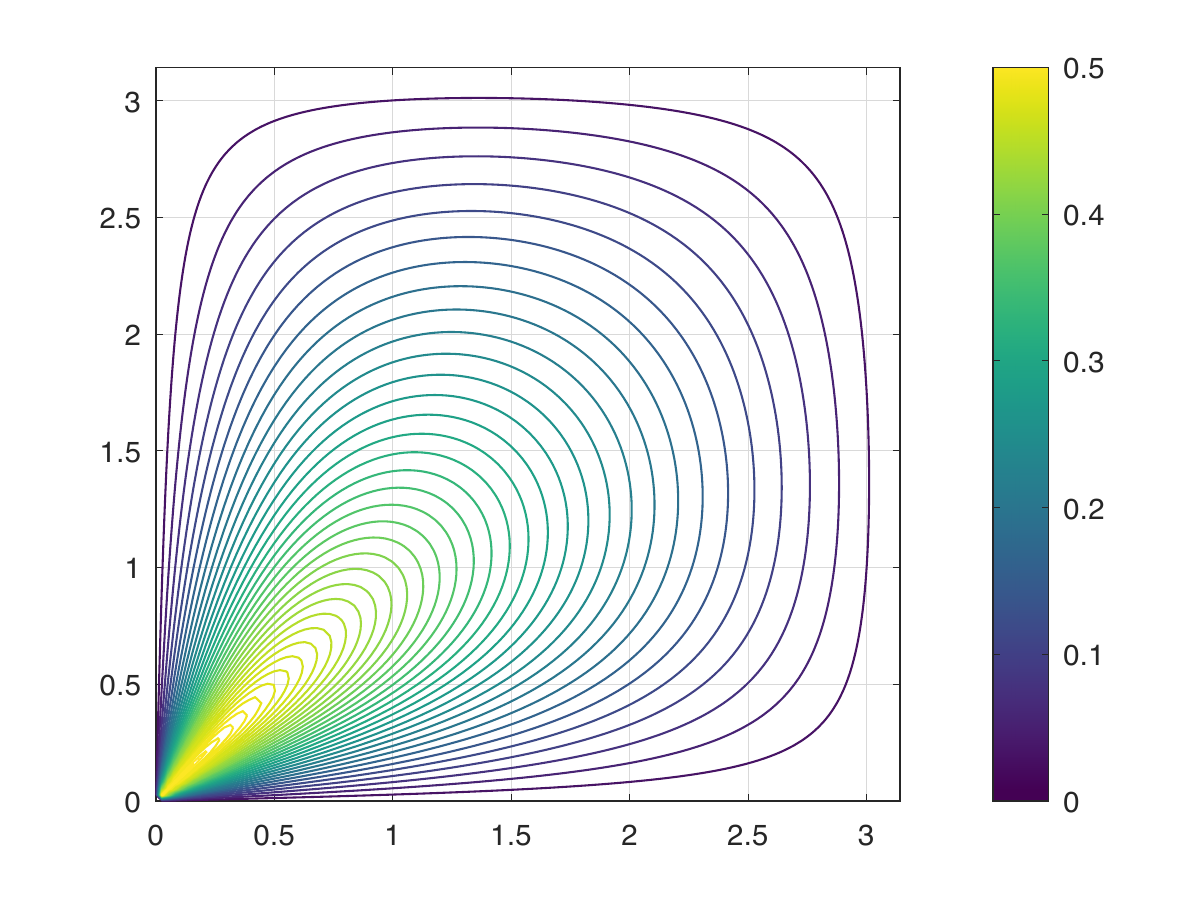}
\caption{Contour plot of $F(\xi)$ on the quarter square $Q_{++}$, Example \ref{Ex:2}.}
\label{F:ContEx2}
\end{figure}

Before we give the proof, let us draw the conclusion for the stability of the numerical scheme~\eqref{E:DDA2dEx2}.
\begin{corollary}
 \label{C:stabEX2}
 Let $\cC=[-\frac12,\frac12]$ and $\lambda\in\C\setminus\cC$. Then for any $N$ the linear system~\eqref{E:DDA2dEx2} has a unique solution, and there is a uniform resolvent estimate \begin{equation}
\label{E:ResTNex2}
 \|(\lambda\Id-T^{N})^{-1}\|_{\cL(\ell^{2}(\Omega^{N}))} \le \dist(\lambda,\cC)^{-1}\,.
\end{equation}
\end{corollary}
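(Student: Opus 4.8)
The plan is to read the Corollary directly off Lemma~\ref{L:im(Fex2)} together with the general machinery of Section~\ref{S:discrete}; no new computation is required once the containment $\im(F)\subset\cC$ is granted. In effect, the Corollary is the specialization of Theorem~\ref{T:stabgeneral} to the present example, the only gain being that the compact convex set $\cC$ is now \emph{explicitly} identified as $[-\frac12,\frac12]$.

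First I would record that Lemma~\ref{L:im(Fex2)} asserts $F(\tau)\in\cC$ for every $\tau\in Q$, i.e.\ $\im(F)\subset\cC$. Since $\cC=[-\frac12,\frac12]$ is closed and convex, it coincides with its own closed convex hull, so by Lemma~\ref{L:TbyF} the set $\overline{W(T)}$ — which is precisely the closed convex hull of $\im(F)$ — satisfies $\overline{W(T)}\subset\cC$. Lemma~\ref{L:TbyF} also identifies $\overline{W(T)}$ with the closure of $\bigcup_{N}W(T^{N})$, whence $W(T^{N})\subset\cC$ for every $N$. (Equivalently, one may invoke the monotonicity $W(T^{N})\subset W(T)$ of the numerical range under compression of $T$ to the finite section $\ell^{2}(\omega^{N})$.)

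It then remains only to convert this numerical-range bound into the resolvent estimate \eqref{E:ResTNex2}. For $\lambda\in\C\setminus\cC$ we have $\lambda\notin\overline{W(T^{N})}$, so $\lambda\Id-T^{N}$ is injective, hence (being a finite matrix) invertible. The standard numerical-range estimate recalled in Section~\ref{SSS:Homog} then gives
\begin{equation*}
 \|(\lambda\Id-T^{N})^{-1}\| \le \dist(\lambda,W(T^{N}))^{-1} \le \dist(\lambda,\cC)^{-1},
\end{equation*}
the second inequality using $W(T^{N})\subset\cC$; this is exactly \eqref{E:ResTNex2}.

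The point to stress is that all the work is hidden in Lemma~\ref{L:im(Fex2)}: once the sharp range bound $\im(F)\subset[-\frac12,\frac12]$ is available, the Corollary carries no further obstacle. Thus the genuine difficulty lies entirely in establishing that range bound for the Ewald-summed symbol $F$ of \eqref{E:FEx2}, for which the explicit saw-tooth formula of Example~\ref{Ex:1} has no analogue; that — and not the passage to the resolvent estimate — is the main obstacle of this subsection.
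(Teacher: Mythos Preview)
Your proposal is correct and matches the paper's approach exactly: the paper treats Corollary~\ref{C:stabEX2} as an immediate consequence of Lemma~\ref{L:im(Fex2)} via the general machinery of Section~\ref{S:discrete} (Lemma~\ref{L:TbyF} and the numerical-range resolvent estimate), stating the Corollary with the words ``let us draw the conclusion'' and reserving the subsequent \texttt{proof} environment for Lemma~\ref{L:im(Fex2)} itself. Your observation that all the genuine work lies in establishing the range bound $\im(F)\subset[-\tfrac12,\tfrac12]$ is precisely the paper's own view of the matter.
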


\begin{proof}
For symmetry reasons, it is sufficient to prove \eqref{E:Fpos} for $\tau\in Q_{++}=(0,\pi)^{2}$.
For the proof of Lemma~\ref{L:im(Fex2)}, we will show the following:
\begin{equation}
\label{E:FposF0neg}
\mbox{ For any }\xi\in Q_{++}\,,\quad
F(\xi)\ge0 \;\mbox{ and } \; F_{0}(\xi)\le0\,.
\end{equation}
This implies $0\le F(\xi)\le \hat K(\xi)\le\frac12$, hence \eqref{E:Fpos}\,.

We use the integral representations from Proposition~\ref{P:FInt}
\begin{equation}
\label{E:F0Int}
 F_{0}(\xi) = \int_{0}^{\infty} H_{0}(\xi,s)ds\,,\quad
 \hat K(\xi) = \int_{0}^{\infty} H_{1}(\xi,s)\,ds\,,\quad
 F(\xi) = \int_{0}^{\infty} H(\xi,s)\,ds\,.
\end{equation}
According to \eqref{E:H0}, $H_{0}(\xi,s)=H(\xi,s)-H_{1}(\xi,s)$ with
\begin{equation}
\label{E:H0H1Ex2}
 H_{1}(\xi,s)= \frac{\xi_{1}\xi_{2}}{4s^{2}} e^{-\frac{|\xi|^{2}}{4s}}\,,\quad
 H(\xi,s)=  \sum_{n\in\Z^{2}} H_{1}(\xi+2\pi n,s)\,.
\end{equation}
Let $0\le\epsilon<T$ and $\Sigma_{\epsilon}^{T}=Q_{++}\times (\epsilon,T)$.
In $\Sigma_{\epsilon}^{T}$, we want to use the maximum principle for the heat equation (see Lemma~\ref{L:heat}) for the functions 
$\tilde H_{0}(\xi,s)=s^{-1}H_{0}(\xi,s)$ and $\tilde H(\xi,s)=s^{-1}H(\xi,s)$. 

Since $\tilde H_{1}(\xi,s)=s^{-1}H_{1}(\xi,s)$ is continuous for $(\xi,s)\in\R^{2}\times[0,\infty)\setminus\{0,0\}$ and the Poisson series 
$$
 \tilde H_{0}(\xi,s) =  \sum_{n\in\Z^{2},n\ne0} \tilde H_{1}(\xi+2\pi n,s)
$$
converges uniformly for $(\xi,s)\in\overline{\Sigma_{0}^{T}}$ for all $T>0$, we see that 
$\tilde H_{0}$ is continuous in $\overline{\Sigma_{0}^{T}}$ with initial value 
$\tilde H_{0}(\xi,0)=0$. 
On the lateral boundary we use the Fourier representation (see \eqref{E:FFintsum})
$$
 \tilde H(\xi,s) = \frac4\pi \sum_{m_{1},m_{2}=1}^{\infty}
    m_{1}m_{2} e^{-|m|^{2}s}
    \sin (m_{1}\xi_{1}) \sin(m_{2}\xi_{2})\,.
$$
If $\xi_{1}$ or $\xi_{2}$ is in $\{0,\pi\}$, this implies that $\tilde H=0$ and therefore
$$
 \tilde H_{0}(\xi,s) = -\tilde H_{1}(\xi,s)\le0 
 \quad\mbox{ for }(\xi,s)\in\partial Q_{++}\times(0,T]\,.
$$
According to Lemma \ref{L:heat}, $\tilde H_{0}$ satisfies the heat equation
$(\partial_{s}-\Delta_{\xi})\tilde H_{0}=0$ in $\Sigma_{0}^{T}$.
Thus we can apply the maximum principle to $\tilde H_{0}$ and obtain 
$\tilde H_{0}(\xi,s)\le0$ in $\Sigma_{0}^{T}$, hence also $H_{0}(\xi,s)\le0$.
Integrating over $s\in(0,\infty)$ yields
$$
 F_{0}(\xi) \le 0 \quad\mbox{ for }\xi\in Q_{++}\,.
$$

For $\tilde H$, we cannot apply the maximum principle directly in $\Sigma_{0}^{T}$, because $\tilde H$ is not continuous at $(0,0)\in\overline{\Sigma_{0}^{T}}$, but we can apply it in
$\Sigma_{\epsilon}^{T}$ for any $0<\epsilon<T$. On the lateral boundary, $\tilde H$ vanishes as seen above, and for the initial value at $s=\epsilon$ we have
$$
 \tilde H(\xi,\epsilon) = \tilde H_{0}(\xi,\epsilon) + \tilde H_{1}(\xi,\epsilon)
 \ge \tilde H_{0}(\xi,\epsilon) \ge \delta(\epsilon)
$$
with $\delta(\epsilon)=\inf_{\xi\in Q_{++}}\tilde H_{0}(\xi,\epsilon)$.
Hence by the maximum principle, in $\overline{\Sigma_{\epsilon}^{T}}$ we have 
$$
 \tilde H(\xi,s) \ge \min\{0,\delta(\epsilon)\}\,.
$$
Now, as we have seen above, $\tilde H_{0}(\cdot,s)$ tends to $0$ uniformly as $s\to0$, hence $\delta(\epsilon)\to0$ as $\epsilon\to0$, which implies $\tilde H(\xi,s)\ge0$ for any $s>0$ and $\xi\in Q_{++}$. After integrating over $s$, we finally get $F(\xi)\ge0$ for $\xi\in Q_{++}$, and the proof of the Lemma is complete.
\end{proof}

\begin{remark}
 \label{R:idealEx2}
 In conclusion, for this example we find the same ``ideal'' stability estimate as in the previous one-dimensional example.
\end{remark}

\refstepcounter{Ex}\label{Ex:3}
\subsection{Example 3. Dimension $d=2$, kernel $(x_{1}^{2}-x_{2}^{2})|x|^{-4}$}\label{SS:2da12}\ 

We consider another two-dimensional example where in the notation of Section~\ref{SSS:Specific} $d=2$ and $p(x)=-\frac1{2\pi} a_{12}(x)$, see \eqref{E:ajk,bjk}. 
We show that the complement of the stability zone in this case is strictly larger than the image of the symbol of the integral operator.

\subsubsection{The singular integral equation}\label{SSS:SIOEx3}
We use the same notation for analogous objects as in the preceding example. Therefore in this section, the letters $K$, $\hat K$, $T$ etc.\ are redefined to have new meanings.
The kernel and its Fourier transform are now
\begin{equation}
\label{E:KandKhatfora}
 K(x) = \frac{x_{2}^{2}-x_{1}^{2}}{2\pi|x|^{4}}\,,\qquad
 \hat K(\xi) = \frac{\xi_{1}^{2}-\xi_{2}^{2}}{2|\xi|^{2}}
   = \frac{\xi_{1}^{2}}{|\xi|^{2}} - \frac12\,.
\end{equation}
The normalization is chosen so that the range of the function $\hat K$ is again the interval 
$[-\frac12,\frac12]$. 

In fact, this kernel is the same as in the previous example \eqref{E:KandKhatforb} after a $45^{\circ}$ rotation of the coordinate system. 
Therefore if we write the singular integral equation as in \eqref{E:IEEx2}, we can copy verbatim the statement of the previous example concerning the numerical range of the integral operator $A_{\Omega}$ (see Lemma~\ref{L:AOmegaEx2}) and the resolvent estimate \eqref{E:resAOmega2DEx2}.
\begin{lemma}
 \label{L:AOmegaEx3}
 Lemma \ref{L:AOmegaEx2} is true for the singular integral equation~\eqref{E:IEEx2} defined with the kernel~\eqref{E:KandKhatfora}.
\end{lemma}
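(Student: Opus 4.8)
The plan is to exploit the fact that the entire content of Lemma~\ref{L:AOmegaEx2} rests on a single quantitative input --- that the range of the symbol $\hat K$ is exactly the interval $\cC=[-\frac12,\frac12]$ --- together with the general machinery of Section~\ref{SS:kernels}. So the first step is to confirm that the new kernel \eqref{E:KandKhatfora} fits that framework. It has the form $K(x)=p(x)|x|^{-d-2}$ with $d=2$ and $p(x)=\frac1{2\pi}(x_2^2-x_1^2)=-\frac1{2\pi}a_{12}(x)$ in the notation \eqref{E:ajk,bjk}, and the cancellation condition \eqref{E:Kvanish} holds because $\int_{\SS^{1}}(x_1^2-x_2^2)\,ds=0$ by symmetry; Lemma~\ref{L:Khat} then reproduces the degree-zero symbol $\hat K$ written in \eqref{E:KandKhatfora}.

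The second step is to compute $\im(\hat K)$. Writing $\hat K(\xi)=\frac{\xi_1^2}{|\xi|^2}-\frac12$ and letting $\xi$ run over $\SS^{1}$, the quantity $\xi_1^2/|\xi|^2$ takes every value in $[0,1]$, so $\im(\hat K)=[-\frac12,\frac12]=\cC$, precisely as in Example~\ref{Ex:2}. From here the three assertions follow mechanically. For $\Omega=\R^2$ we have $A_\Omega=A$, so Lemma~\ref{L:Sp+W(A)} gives $\Sp(A)=\im(\hat K)=\cC$ and identifies $\overline{W(A)}$ with the convex hull of $\im(\hat K)$, which is again $\cC$ since that set is already a segment. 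Remark~\ref{R:WAOmega=WA} upgrades this to $\overline{W(A_\Omega)}=\overline{W(A)}=\cC$ for every open $\Omega$, in particular yielding the inclusion claimed in Lemma~\ref{L:AOmegaEx2}; and since $\hat K(\xi)\in\cC$ for all $\xi\in\SS^{1}$, Proposition~\ref{P:W(AOmega)} delivers the resolvent bound \eqref{E:resAOmega2DEx2}.

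As an alternative route, closer to the remark preceding the statement, I would make the rotation explicit. With the $45^{\circ}$ rotation $\eta=R\xi$, $\eta_1=\tfrac1{\sqrt2}(\xi_1-\xi_2)$, $\eta_2=\tfrac1{\sqrt2}(\xi_1+\xi_2)$, one checks $\eta_1\eta_2=\tfrac12(\xi_1^2-\xi_2^2)$ and $|\eta|=|\xi|$, so the present symbol equals the Example~\ref{Ex:2} symbol composed with $R$; since the Fourier transform intertwines rotations, the two kernels satisfy $K(x)=K_{\mathrm{Ex2}}(Rx)$, and the unitary change of variables $u\mapsto u\circ R$ on $L^2$ exhibits $A_\Omega$ as unitarily equivalent to the Example~\ref{Ex:2} operator on the rotated domain. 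Unitary equivalence preserves spectrum and numerical range, and because Lemma~\ref{L:AOmegaEx2} is asserted for \emph{every} open set, the conclusion transfers verbatim. There is no genuine obstacle in either approach; the only points needing care are the verification of the cancellation condition so that the theory of Section~\ref{SS:kernels} applies, and the confirmation that $\im(\hat K)$ is the full closed interval $\cC$ rather than a proper subset. Both are immediate, and I would adopt the direct computation as the primary argument, mentioning the rotation as the conceptual reason why the two examples share the same integral-operator numerical range.
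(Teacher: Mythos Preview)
Your proposal is correct. The paper's own justification is precisely your ``alternative route'': it simply observes that the kernel \eqref{E:KandKhatfora} is obtained from \eqref{E:KandKhatforb} by a $45^{\circ}$ rotation and therefore inherits Lemma~\ref{L:AOmegaEx2} verbatim. Your primary argument --- checking directly that the kernel has the form \eqref{E:Kernels} with the cancellation condition, computing $\im(\hat K)=[-\tfrac12,\tfrac12]$, and then invoking Lemma~\ref{L:Sp+W(A)}, Remark~\ref{R:WAOmega=WA} and Proposition~\ref{P:W(AOmega)} --- is a slightly more self-contained route that bypasses the unitary-equivalence step, but in substance both approaches reduce to the same trivial verification that the symbol has range $\cC$.
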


\subsubsection{The discrete system}\label{SS:DA2dEx3}
To the delta-delta discretization 
\begin{equation}
\label{E:DDA2dEx3}
 \lambda u_{m} - N^{-2}\sum_{n\in\omega^{N},m\ne n} K(x^{N}_{m}-x^{N}_{n}) u_{n}
   = f(x^{N}_{m}) \quad (m\in\omega^{N})
\end{equation}
corresponds the finite section $T^{N}$ of the infinite Toeplitz matrix
$$
  T = \frac1{2\pi} \Big(  
  \frac{(m_{2}-n_{2})^{2}-(m_{1}-n_{1})^{2}}{|m-n|^{4}}
  \Big)_{m,n\in\Z^{2}}\quad
  \mbox{ with zero on the diagonal.}
$$ 
The numerical symbol (symbol of $T$) is now defined as
\begin{equation}
\label{E:FEx3}
 F(\tau) = \sum_{m\in\Z^{2},m\ne0}\frac{m_{2}^{2}-m_{1}^{2}}{2\pi|m|^{4}}\,e^{im\cdot \tau}
 \,.
\end{equation}
\begin{lemma}
 \label{L:im(Fex3)}
 Let $$\Lambda_{0}=\dfrac{\Gamma(\frac14)^{4}}{32\pi^{2}}=0.5471...\,.$$ 
 Let $F(\tau)$ be as defined in \eqref{E:FEx3}. 
 Then there exists $\Lambda_{+}\ge\Lambda_{0}$ such that 
 $F(Q)=\cC=[-\Lambda_{+},\Lambda_{+}]$. 
 \end{lemma}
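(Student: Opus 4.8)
The plan is to determine the exact range of the symbol
\begin{equation*}
 F(\tau) = \sum_{m\in\Z^{2},m\ne0}\frac{m_{2}^{2}-m_{1}^{2}}{2\pi|m|^{4}}\,e^{im\cdot \tau}
\end{equation*}
and to show that, unlike in Example~\ref{Ex:2}, the extreme values $\pm\Lambda_{+}$ are attained at corners or edges of $Q$ rather than via the homogeneous symbol $\hat K$. The key structural fact is the decomposition $F=\hat K+F_{0}$ from \eqref{E:F=F0+Khat}, with $F_{0}$ analytic and $F_{0}(0)=0$. Since $\hat K$ is odd under the swap $\tau_{1}\leftrightarrow\tau_{2}$ and $F$ inherits the cubic symmetries of the lattice, the range is symmetric about $0$, so it suffices to compute $\Lambda_{+}=\max_{\tau\in Q}F(\tau)$ and argue $\min_{\tau\in Q}F(\tau)=-\Lambda_{+}$. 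First I would locate the maximizer. From the series \eqref{E:FEx3} and the symmetries one expects the maximum of $F$ to occur where $\hat K$ is largest, namely along the $\tau_{1}$-axis direction, but the crucial point is that the maximizer of $F$ is pushed to the \emph{boundary} of $Q$; I would check that $F$ attains its extreme value at the midpoint of an edge, $\tau=(\pi,0)$ (equivalently $(0,\pi)$ by antisymmetry giving the minimum). The claim $\Lambda_{+}\ge\Lambda_{0}$ then reduces to evaluating $F(\pi,0)$.

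The heart of the matter is the explicit evaluation giving the constant $\Lambda_{0}=\Gamma(\frac14)^{4}/(32\pi^{2})$. I would use the Ewald representation \eqref{E:Fewald}, or better the integral representation of Proposition~\ref{P:FInt}, to write $F(\pi,0)=\int_{0}^{\infty}H(\pi,0,s)\,ds$ and evaluate the resulting one-dimensional lattice sum. Setting $\tau=(\pi,0)$ makes $e^{im\cdot\tau}=(-1)^{m_{1}}$, so
\begin{equation*}
 F(\pi,0)=\frac1{2\pi}\sum_{(m_{1},m_{2})\ne0}\frac{(m_{2}^{2}-m_{1}^{2})(-1)^{m_{1}}}{(m_{1}^{2}+m_{2}^{2})^{2}}\,.
\end{equation*}
The appearance of $\Gamma(\frac14)^{4}$ signals a connection to the lemniscate constant and to lattice sums associated with the Gaussian integers; I would evaluate this via a theta-function / Eisenstein-type computation, writing each denominator as a Gaussian integral $\frac1{(m_{1}^{2}+m_{2}^{2})^{2}}=\int_{0}^{\infty}s\,e^{-(m_{1}^{2}+m_{2}^{2})s}\,ds$ and using the one-dimensional theta identities with the alternating sign, reducing the double sum to an integral of products of Jacobi theta functions. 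The special value of the lemniscatic period integral then produces $\Gamma(\frac14)^{4}$.

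The main obstacle is twofold. First, proving rigorously that the maximum of $F$ over $Q$ is actually attained on the boundary (and precisely at the edge midpoint), rather than merely checking it numerically as suggested by the contour plot: a maximum-principle or monotonicity argument along the lines of the heat-equation technique used in Example~\ref{Ex:2} does not directly give the location of the extremum here, because the overshoot means the extreme value exceeds the range of $\hat K$. I would try to establish monotonicity of $F$ along suitable one-parameter families (e.g.\ along rays or along $\tau_{2}=0$) using the integral representation and the positivity structure of $H$, reducing the two-dimensional optimization to the one boundary point. Second, the closed-form evaluation of the lattice sum to obtain exactly $\Gamma(\frac14)^{4}/(32\pi^{2})$ requires care with conditional convergence, which is precisely why the Ewald/integral representation is essential: it legitimizes the interchange of summation and integration. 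I expect the bound $\Lambda_{+}\ge\Lambda_{0}$ to follow once $F(\pi,0)=\Lambda_{0}$ is established, with equality $\Lambda_{+}=\Lambda_{0}$ being the sharper statement that the global maximum coincides with this boundary value.
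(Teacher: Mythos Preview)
Your reading of the statement is slightly off, and this leads you to manufacture an obstacle that is not there. The lemma asserts only that the image $F(Q)$ is a symmetric interval $[-\Lambda_{+},\Lambda_{+}]$ for \emph{some} $\Lambda_{+}\ge\Lambda_{0}$; the identification $\Lambda_{+}=\Lambda_{0}$, i.e.\ that the global maximum is attained at $(\pi,0)$, is explicitly separated out as a Conjecture and is \emph{not} proved in the paper. So your ``first main obstacle'' (locating the maximizer on the edge midpoint via monotonicity or a heat-equation argument) is simply not part of what has to be shown. Once you drop that, the lemma reduces to two easy structural facts plus one computation: (i) $F$ is odd under the swap $\tau_{1}\leftrightarrow\tau_{2}$, so the image is symmetric about $0$; (ii) $F=\hat K+F_{0}$ with $F_{0}$ continuous on $Q$ and $\hat K$ bounded, so $F$ is bounded and (being continuous on the connected set $Q\setminus\{0\}$) has an interval as image and attains its supremum; (iii) $F(\pi,0)=\Lambda_{0}$, which gives $\Lambda_{+}\ge\Lambda_{0}$. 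You do state (iii) correctly at the end, but the surrounding discussion obscures that this is all that is needed.

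For the evaluation $F(\pi,0)=\Lambda_{0}$, your proposed route via the Gaussian-integral/theta-function representation and lemniscatic periods is plausible but different from the paper's. The paper instead applies the one-dimensional Poisson summation formula to $f(x)=(x-iy)^{-1}$, differentiates in $y$, and combines the results for $\pm y$ to obtain the closed form
\[
\sum_{n\in\Z}\frac{n^{2}-y^{2}}{(n^{2}+y^{2})^{2}}=-\frac{\pi^{2}}{\sinh^{2}\pi y}\,.
\]
Summing over the remaining index then collapses the double lattice sum to the rapidly convergent single series $\Lambda_{0}=\frac{\pi}{6}-\pi\sum_{n\ge1}(-1)^{n}/\sinh^{2}\pi n$, and the identification with $\Gamma(\tfrac14)^{4}/(32\pi^{2})$ is taken from the lattice-sum literature (Zucker). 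This is more elementary and more explicit than a general theta-function argument, and it sidesteps the conditional-convergence issue directly rather than going through the Ewald representation.
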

\begin{conjecture}
 \label{C:Lambda+=Lambda0}
 Numerical evidence suggests equality 
 \begin{equation}
\label{E:Lambda+=Lambda0}
 \Lambda_{+}=\Lambda_{0}\,.
\end{equation}
\end{conjecture}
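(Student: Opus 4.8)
The assertion to prove is the sharp identity $\Lambda_{+}=\Lambda_{0}$, i.e.\ that the lower bound $\Lambda_{+}\ge\Lambda_{0}$ coming from $F(\pi,0)=\Lambda_{0}$ is in fact an equality, so that $(\pi,0)$ is a global maximizer of $F$ on $Q$ and $\max_{Q}F=\Lambda_{0}$. By the symmetries of the kernel — $F$ is even in each variable and $F(\tau_{1},\tau_{2})=-F(\tau_{2},\tau_{1})$ — it suffices to work on the closed triangle $\mathcal R=\{(\tau_{1},\tau_{2}):0\le\tau_{2}\le\tau_{1}\le\pi\}$, on which one expects $0\le F\le\Lambda_{0}$ with equality only at the corner $(\pi,0)$; on the diagonal edge $\tau_{1}=\tau_{2}$ one has $F=0$, and the two remaining edges meet at $(\pi,0)$. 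The entire difficulty is the upper bound $F\le\Lambda_{0}$ on $\mathcal R$, since the lower bound is already known.

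First I would pin down the lower bound by computing $F(\pi,0)$ in closed form. Summing the series \eqref{E:FEx3} over $m_{2}$ for fixed $m_{1}$ turns $\tau_{1}\mapsto F(\tau_{1},0)$ into a one-dimensional Fourier series with coefficients $c(m_{1})=\frac1{2\pi}\sum_{m_{2}}\frac{m_{2}^{2}-m_{1}^{2}}{(m_{1}^{2}+m_{2}^{2})^{2}}$, which are evaluated by the classical identity $\sum_{m_{2}\in\Z}(a+m_{2}^{2})^{-1}=\frac{\pi}{\sqrt a}\coth(\pi\sqrt a)$ and its derivative, yielding $c(0)=\tfrac\pi6$ and, for $m_{1}\ne0$, an explicit expression in $\coth(\pi|m_{1}|)$ and $\operatorname{csch}^{2}(\pi|m_{1}|)$. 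Evaluation at $\tau_{1}=\pi$ produces the alternating lattice sum $F(\pi,0)=\frac1{2\pi}\sum_{m\ne0}(-1)^{m_{1}}\frac{m_{2}^{2}-m_{1}^{2}}{|m|^{4}}$, an Epstein-type sum for the Gaussian lattice $\Z^{2}=\Z[i]$ twisted by the character $(-1)^{m_{1}}$. Because $\tau=i$ is the self-dual (lemniscatic) modulus, this sum evaluates through theta-constant / elliptic-integral identities to $\frac{\Gamma(\frac14)^{4}}{32\pi^{2}}=\frac{\varpi^{2}}{4\pi}$, with $\varpi$ the lemniscate constant; this is the origin of $\Lambda_{0}$ and re-proves $\Lambda_{+}\ge\Lambda_{0}$.

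For the upper bound I would first reduce to the boundary. Using the Ewald representation $F=\hat K+F_{0}$ of Proposition~\ref{P:NumSym} and the integral/heat structure of Section~\ref{SS:FInt}, one checks that $F$ is continuous on $\mathcal R\setminus\{0\}$ and that near $0$ it stays strictly below $\Lambda_{0}$ (there $F\approx\hat K\in[-\tfrac12,\tfrac12]$ and $\tfrac12<\Lambda_{0}$), so a maximum exceeding $\tfrac12$ is attained either in the interior or on the two non-diagonal edges. On the edge $\tau_{2}=0$ one has the one-dimensional series of the previous step, and on $\tau_{1}=\pi$ a companion one-dimensional series; the plan is to prove that each is monotone towards $(\pi,0)$, so that along the boundary the maximum sits at that corner and equals $\Lambda_{0}$. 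The remaining task is to exclude interior maxima larger than $\Lambda_{0}$.

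Here lies the main obstacle, and it is exactly what separates this example from Example~\ref{Ex:2}. There the clean inequality $F\le\hat K$ held on $Q_{++}$, and the sign $F_{0}\le0$ followed from the heat-equation maximum principle applied to $\tilde H_{0}$. In the present case the overshoot $\Lambda_{0}>\tfrac12=\max\hat K$ forces $F_{0}(\pi,0)>0$, so $F\le\hat K$ is false and $F_{0}$ is not sign-definite on $\mathcal R$. Moreover a computation using Lemma~\ref{L:heat} gives $\Delta_{\xi}F=-G$, where $G$ is the periodization of $2(\xi_{1}^{2}-\xi_{2}^{2})|\xi|^{-4}$; since the principal cell contributes a positive term on $\mathcal R$, $F$ is, at least near the relevant region, \emph{superharmonic}, and for superharmonic functions the maximum principle does not confine the maximum to the boundary. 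Thus neither the comparison with $\hat K$ nor the maximum-principle argument of Example~\ref{Ex:2} transfers, and proving $F\le\Lambda_{0}$ appears to require a genuinely sharp, quantitative control of the analytic part $F_{0}$ on all of $\mathcal R$ — for instance through refined estimates of the Ewald integrand $H_{0}(\xi,s)$ uniform in $s$, or a monotonicity argument for the full profile. This is precisely the gap that keeps the equality $\Lambda_{+}=\Lambda_{0}$ at the level of a conjecture, even though it is strongly supported by the Ewald-based numerical evaluation of $F$.
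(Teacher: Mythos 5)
You are right not to claim a proof here: the paper itself leaves \eqref{E:Lambda+=Lambda0} as a conjecture, and your assessment of exactly where the obstruction lies agrees with the paper's. Your evaluation of $F(\pi,0)$ is essentially the paper's proof of \eqref{E:F(pi)=Lambda0} inside Lemma~\ref{L:im(Fex3)}: summing over $m_{2}$ for fixed $m_{1}$ via the $\coth$ identity and its derivative is equivalent to the paper's route through Poisson summation of $(x-iy)^{-1}$ (formula \eqref{E:1Ex3}), differentiation in $y$, and the limit \eqref{E:sum0Ex3}; both land on the same rapidly convergent series \eqref{E:Lambda0}, whose closed form $\Gamma(\frac14)^{4}/(32\pi^{2})$ the paper takes from Zucker's tables rather than deriving it through the lemniscatic theta identities you sketch. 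Your diagnosis of why the Example~\ref{Ex:2} argument does not transfer is also accurate, and in one respect sharper than what the paper states: the computation that $\Delta_{\xi}F$ is minus the periodization of $2(\xi_{1}^{2}-\xi_{2}^{2})|\xi|^{-4}$, so that $F$ is superharmonic where $\xi_{1}>\xi_{2}$, explains why no maximum principle applied to $F$ itself can push its maximum to the boundary of your triangle $\mathcal{R}$; indeed $F_{0}(\pi,0)=\Lambda_{0}-\tfrac12>0$, so the sign condition that drove Example~\ref{Ex:2} fails.

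What your proposal misses is the paper's partial salvage of the maximum-principle method. Instead of abandoning it, the paper rotates to the lozenge $Q_{\lozenge}$, on whose boundary $F$ vanishes but which contains \emph{two} singular points of $F$, and refines the splitting to $F=F_{00}+\hat K(\xi)+\hat K(\xi-(2\pi,0))$ with $F_{00}$ continuous on the closure of $Q_{\lozenge}$. The heat-equation argument of Lemma~\ref{L:im(Fex2)}, via Lemma~\ref{L:heat} and Proposition~\ref{P:FInt}, then applies verbatim and yields $F\ge0$ and $F_{00}\le0$ on $Q_{\lozenge}$, hence the unconditional bound $\Lambda_{+}\le1$ of \eqref{E:Lambda+Ex3} --- weaker than the conjectured $\Lambda_{0}\approx0.5471$ precisely because the comparison function $\hat K(\xi)+\hat K(\xi-(2\pi,0))$ peaks at $1$, but a theorem nonetheless. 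Your boundary-reduction plan (monotonicity along the edges toward $(\pi,0)$) would, even if completed, still leave interior maxima unexcluded, as you yourself note; so the status you arrive at --- lower bound $\Lambda_{+}\ge\Lambda_{0}$ proved, sharp upper bound open, equality supported by the Ewald-based evaluations --- is exactly the paper's, minus the explicit bound $\Lambda_{+}\le1$.
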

\par\noindent{\it Proof of Lemma~\ref{L:im(Fex3)}}. \ignorespaces
 The function $F$ is odd with respect to permutation of $\xi_{1}$ and $\xi_{2}$. The decomposition $F=F_{0}+\hat K$ with $F_{0}$ continuous on $Q$ implies that $F$ takes its maximum $\Lambda_{+}$ on $Q$. Therefore its image $F(Q)$ is a closed symmetric interval 
$\cC=[-\Lambda_{+},\Lambda_{+}]$. We are going to show that 
\begin{equation}
\label{E:F(pi)=Lambda0}
 F(\pi,0) = \Lambda_{0}\,.
\end{equation}
The conjecture \eqref{E:Lambda+=Lambda0} then corresponds to the claim that $F$ attains its maximum on $Q$ in the point $\tau=(\pi,0)$.

To prove \eqref{E:F(pi)=Lambda0}, we first transform the slowly converging double Fourier series
\begin{equation}
\label{E:FLambda0}
 F(\pi,0) = 
  \sum_{m\in\Z^{2},m\ne0}(-1)^{m_{1}}\frac{m_{2}^{2}-m_{1}^{2}}{2\pi(m_{1}^{2}+m_{2}^{2})^{2}}
\end{equation}
into a rapidly convergent single series. One way to get this is to start with the Poisson summation formula applied to the function $f(x)=(x-iy)^{-1}$ whose Fourier transform is 
$\hat f(\xi)=2\pi i \mathds{1}_{+}(\xi)e^{-y\xi}$ for $y>0$. The result is then valid for all $y\ne0$. It can be written for $t\in[-\pi,\pi]$ as
\begin{equation}
 \label{E:1Ex3}
 \sum_{n\in\Z}\frac{e^{int}}{n-iy} =
 i \pi \frac{e^{y\sigma(t)}}{\sinh(\pi y)} 
 \quad \mbox{ with }
 \sigma(t) = -t + \pi\,\sign\, t\,.
\end{equation}
Remark: Euler's formula \eqref{E:sawtooth} is a simple consequence of this.\\
Taking the derivative with respect to $y$ and subtracting the formulas for $y$ and $-y$ leads to
\begin{equation}
 \label{E:sum1Ex3}
 \sum_{n\in\Z}\frac{n^{2}-y^{2}}{(n^{2}+y^{2})^{2}}e^{int} =
 \pi \,
  \frac{\sigma(t)\sinh(\sigma(t)y)\sinh(\pi y)-\pi\cosh(\sigma(t)y)\cosh(\pi y)}{\sinh^{2}\pi y} .
 \end{equation}
This can be used to reduce the double Fourier series for $F(\xi)$ to a single rapidly convergent Fourier series. We are here only interested in the limit $t\to0$:
\begin{equation}
 \label{E:sum0Ex3}
 \sum_{n\in\Z}\frac{n^{2}-y^{2}}{(n^{2}+y^{2})^{2}} =
  \frac{-\pi^{2}}{\sinh^{2}\pi y} \,.
 \end{equation}
Hence, by decomposing the double sum $\sum_{m\in\Z^{2}\setminus\{0\}}$ as
$\sum_{m_{1}=0,m_{2}\in\Z\setminus\{0\}}+\sum_{m_{1}\in\Z\setminus\{0\},m_{2}\in\Z}$ and using
$\sum_{n=1}^{\infty}\frac1{n^{2}}=\frac{\pi^{2}}6$ (which can also be obtained from \eqref{E:sum0Ex3} by looking at the pole in $y=0$)
we finally get
\begin{equation}
\label{E:Lambda0}
\Lambda_{0}=
\sum_{m\in\Z^{2},m\ne0}(-1)^{m_{1}}\frac{m_{2}^{2}-m_{1}^{2}}{2\pi(m_{1}^{2}+m_{2}^{2})^{2}}
=  \frac\pi6 - \sum_{n=1}^{\infty}\frac{(-1)^{n}\pi}{\sinh^{2}\pi n}\,.
\end{equation}
This series converges rapidly, with $5$ terms giving $15$ significant digits:
$\Lambda_{0}=0.547109903806619...$
The series is covered by the formulas for $\mathrm{IX}_{s}$ with $s=2$ in \cite{Zucker1979}.
The explicit expression for $\Lambda_{0}$ given in the Lemma can be deduced from this.
\qed\endtrivlist

\begin{remark}
 \label{R:confrad}
 The conformal radius (or logarithmic capacity) of the unit square is known to be \cite[Tables]{PolyaSzego1951}
 $$
   R_{\Box}= \frac{\Gamma(\frac14)^{2}}{4\pi^{\frac32}}\,.
 $$
 This implies the remarkable relation
 \begin{equation}
\label{E:ConfArea}
  \pi R_{\Box}^{2}= 2 \Lambda_{0} \,.
\end{equation}
\end{remark}
The conjecture that $\Lambda_{+}=\Lambda_{0}$ is clearly supported by numerical evidence. Here are the results of two different approaches for the approximation of $\Lambda_{+}$: 

In Table~\ref{T:supF} we approximate the numerical symbol $F$ from \eqref{E:FEx3} using the Ewald method \eqref{E:Fewald} from Proposition~\ref{P:NumSym}.
\begin{equation}
\label{E:FewaldEx3}
 F(\tau) \approx 
 \sum_{|m_{1}|,|m_{2}|\le M,m\ne0}K(m)\Gamma(2,\pi|m|^{2})e^{im\cdot \tau} +
 \sum_{|n_{1}|,|n_{2}|\le M}\hat K(\tau+2\pi n)\,e^{-\frac{|\tau+2\pi n|^{2}}{4\pi}}\,.
\end{equation}
We take the maximum of $F(\tau)$ over a regular $N\times N$ grid discretizing the period square $Q=[-\pi,\pi]^{2}$. Results are shown for $N=1001$, so that the point $(\pi,0)$ is included. One sees the rapid convergence of the sums in the Ewald method. 
\begin{table}[htbp]
   \centering
    \begin{tabular}{@{} lcr @{}} 
      \toprule
      $M$    & Maximum & diff with $\Lambda_{0}$\\
      \midrule
1 & 0.5466820485568409 & -0.00043 \\
2 & 0.5471099022284376 & -1.578e-9\\
3 & 0.5471099038066192 & 1.11e-16\\
4 & 0.5471099038066192 & 1.11e-16\\
      \bottomrule
   \end{tabular}\vglue1ex
   \caption{Computation of $\Lambda_{+}$}
   \label{T:supF}
\end{table}

In Table~\ref{T:maxEv} we show the maximum eigenvalue of the matrix $T^{N}$ where $\Omega$ is the unit square, together with an extrapolated value and its difference with $\Lambda_{0}$. 
\begin{table}[htbp]
   \centering
    \begin{tabular}{@{} lccr @{}} 
      \toprule
      $N$    &  $\lambda_{\max}(T^{N})$ & extrap. & diff with $\Lambda_{0}$\\
      \midrule
16 & 0.541802946417726 &  &\\
24 & 0.544571778645890 &  & \\
36 & 0.545922219922679 &  0.547207966733364 & 9.81e-5\\
54 & 0.546562896841136 & 0.547141211191569 & 3.13e-5\\
81 & 0.546860792009930 & 0.547119678405314 & 9.77e-6\\
      \bottomrule
   \end{tabular}\vglue1ex
   \caption{Computation of $\max(\Sp(T^{N}))$, Example \ref{Ex:3}}
   \label{T:maxEv}
\end{table}

For comparison, we show in Table~\ref{T:maxEvEx2} the analogous computations for the matrices from Example \ref{Ex:2}, where $\Lambda_{+}=0.5$.
\begin{table}[htbp]
   \centering
    \begin{tabular}{@{} lccr @{}} 
      \toprule
      $N$    &  $\lambda_{\max}(T^{N})$ & extrap. & diff with $0.5$\\
      \midrule
16 & 0.4299869696672885 &  &\\
24 & 0.4526591158216325 &  & \\
36 & 0.4683227545642122 &  0.5033301483277116 & 3.33e-3\\
54 & 0.4789372344435390 & 0.5012512843991882 & 1.25e-3\\
81 & 0.4860451011088278 & 0.5004526691660266 & 4.53e-4\\
      \bottomrule
   \end{tabular}\vglue1ex
   \caption{Computation of $\max(\Sp(T^{N}))$, Example \ref{Ex:2}}
   \label{T:maxEvEx2}
\end{table}

In the previous Example \ref{Ex:2}, we were able to prove the equation $\Lambda_{+}=0.5$ using an argument involving the maximum principle for the heat equation, see the proof of Lemma~\ref{L:im(Fex2)}, in particular \eqref{E:FposF0neg}.
While we have no proof for the equation $\Lambda_{+}=\Lambda_{0}$ here,
it is possible to use an analogous argument to obtain an upper bound for $\Lambda_{+}$. The square $Q_{++}$ (of area $\pi^{2}$) of the previous example now has to be turned by $45^{\circ}$ and to be replaced by the lozenge (a square of area $2\pi^{2})$
$$
  Q_{\lozenge} = \big\{\xi\in\R^{2} \mid 0<\xi_{1}<2\pi;\; |\xi_{2}|<\min\{\xi_{1}, 2\pi-\xi_{1}\}\big\}. 
$$ 
Then one can see again that $F(\xi)=0$ on $\partial Q_{\lozenge}$. 
But now $\partial Q_{\lozenge}$ contains two points of discontinuity of $F$, the origin $(0,0)$ and the point $(2\pi,0)$. Therefore the decomposition $F=F_{0}+\hat K$ has to be refined into
$$
  F(\xi) = F_{00}(\xi) + \hat K(\xi) + \hat K(\xi-(2\pi,0))\,.
$$
The function $F_{00}$ defined by this will then be continuous on the closure of $Q_{\lozenge}$. Now one can use the integral representation from Proposition~\ref{P:FInt} similarly to \eqref{E:F0Int} and use the maximum principle for the heat equation as before to conclude that
$$
 \mbox{ For any }\xi\in Q_{\lozenge}\,,\quad
F(\xi)\ge0 \;\mbox{ and } \; F_{00}(\xi)\le0\,.
$$
This implies $0\le F(\xi)\le \hat K(\xi)+ \hat K(\xi-(2\pi,0))$ in $Q_{\lozenge}$, and hence by taking the maximum,
\begin{equation}
\label{E:Lambda+Ex3}
\Lambda_{+}\le 1\,.
\end{equation}
Unfortunately, this estimate is much less sharp than the estimate by $\frac12$ in the previous example.

To illustrate the behavior of the numerical symbol $F(\xi)$, we present in Figure~\ref{F:NumSymEx3} a surface graph of $F$ on the square $Q$ and a contour plot on the lozenge $Q_{\lozenge}$. The parts exceeding the range of the symbol $\hat K$ are indicated in bright red hues. The maximum at the midpoint $(\pi,0)$ of $Q_{\lozenge}$ is in clear evidence.
\begin{figure}[h]
\centering
\includegraphics[width=0.495\textwidth]{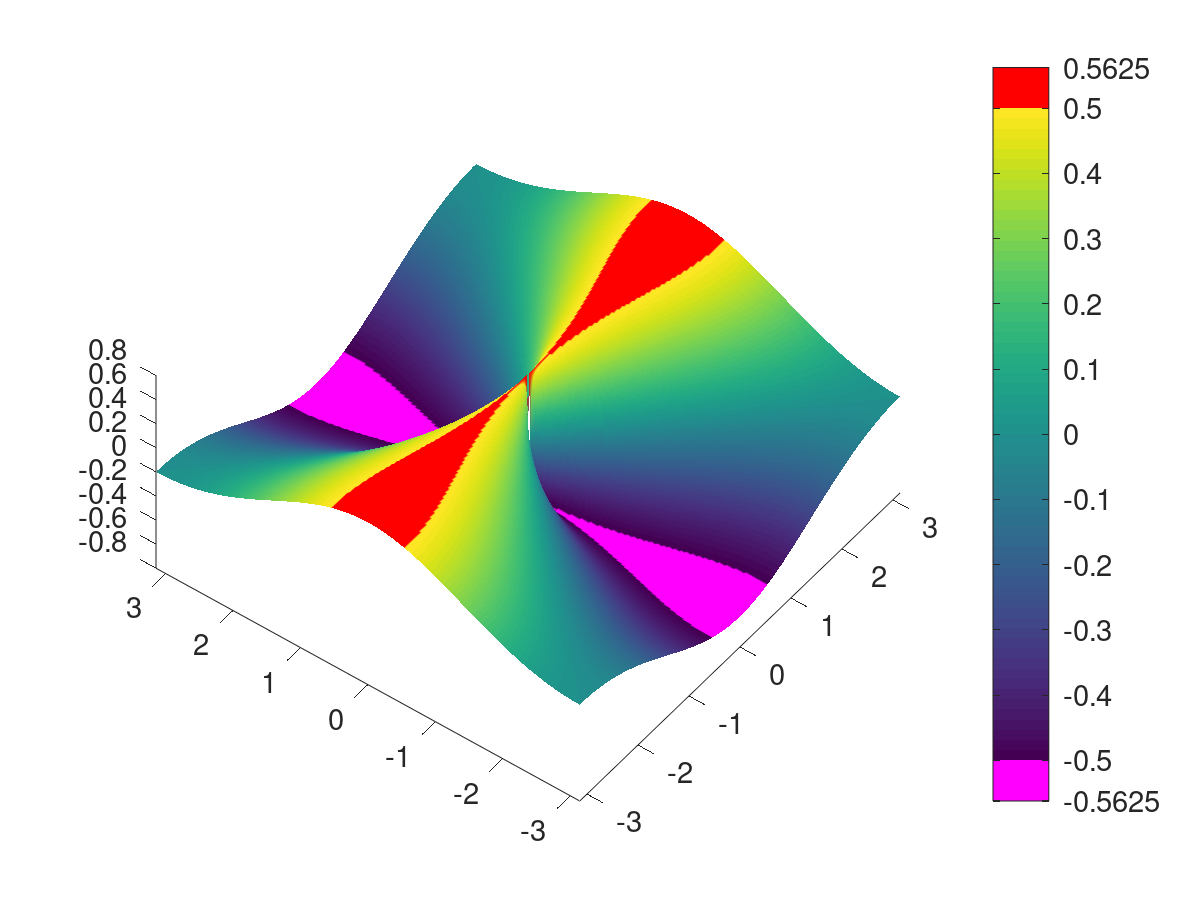}
\includegraphics[width=0.495\textwidth]{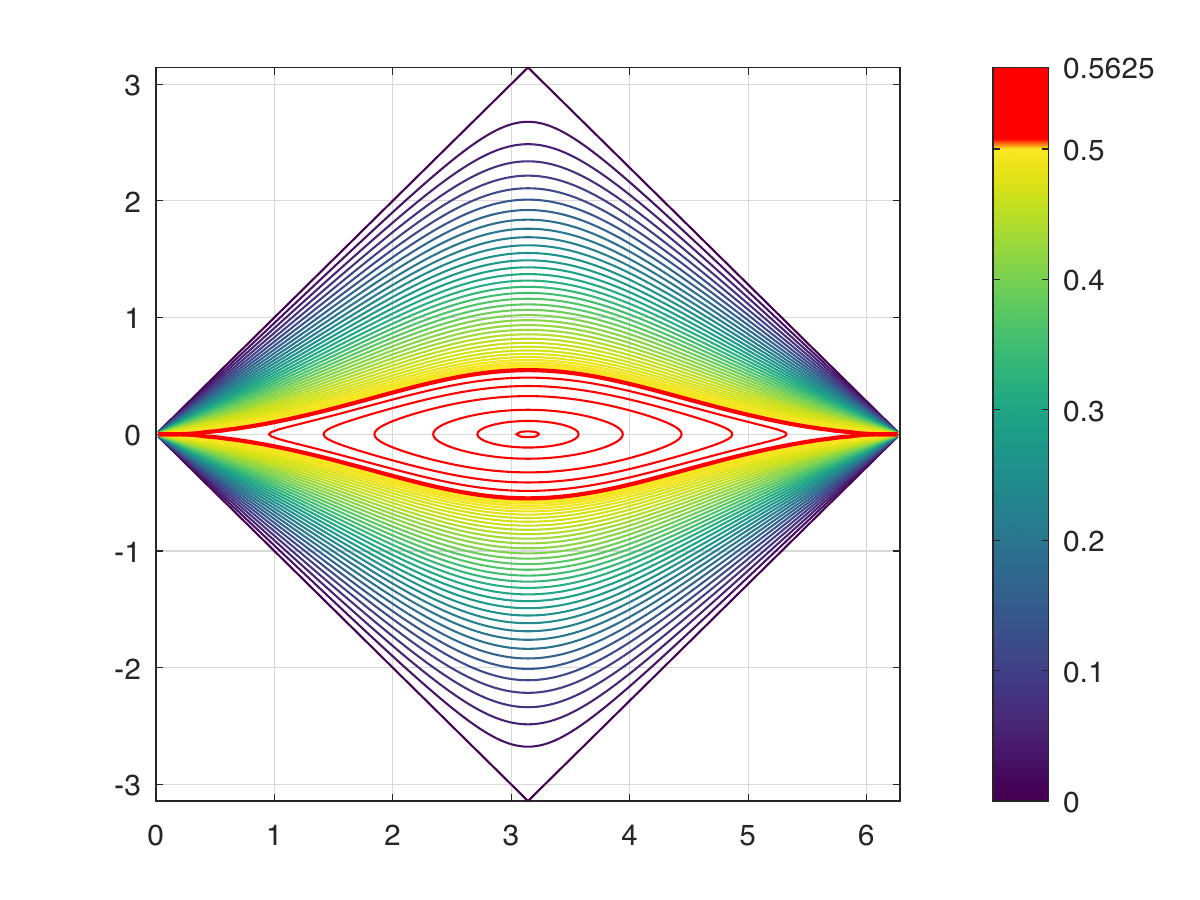}
\caption{Numerical symbol for Example \ref{Ex:3}.
Left: $F$ on $Q$, right: $F$ on $Q_{\lozenge}$.}
\label{F:NumSymEx3}
\end{figure}

Let us summarize the stability result obtained for this example.

\begin{corollary}
 \label{C:stabEX3}
 Let $\cC=[-\Lambda_{+},\Lambda_{+}]$ and $\lambda\in\C\setminus\cC$. Then for any $N$ the linear system~\eqref{E:DDA2dEx2} has a unique solution, and there is a uniform resolvent estimate 
\begin{equation}
\label{E:ResTNex3}
 \|(\lambda\Id-T^{N})^{-1}\|_{\cL(\ell^{2}(\Omega^{N}))} \le \dist(\lambda,\cC)^{-1}\,.
\end{equation}
For $\lambda\in[-\Lambda_{+},-\frac12)\cup(\frac12,\Lambda_{+}]$ the integral equation 
$(\lambda \Id -A_{\Omega})u=f$ with kernel \eqref{E:KandKhatfora} is well-posed in $L^{2}(\Omega)$, but the corresponding delta-delta approximation scheme \eqref{E:DDA2dEx3} is unstable.
\end{corollary}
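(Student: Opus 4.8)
The plan is to treat the two assertions separately. For the uniform resolvent bound \eqref{E:ResTNex3} when $\lambda\notin\cC$, nothing beyond the general theory is needed: by Lemma~\ref{L:im(Fex3)} the image $F(Q)=[-\Lambda_{+},\Lambda_{+}]$ is already a convex interval and hence coincides with its own closed convex hull, so $\cC$ is exactly the set produced by Theorem~\ref{T:stabgeneral} for this kernel and \eqref{E:ResTNex3} is that theorem's conclusion \eqref{E:resTN}. Equivalently one argues directly from Lemma~\ref{L:TbyF}: the inclusion $W(T^{N})\subset\overline{W(T)}=\cC$ gives $\dist(\lambda,W(T^{N}))\ge\dist(\lambda,\cC)$, and the numerical-range resolvent estimate yields $\|(\lambda\Id-T^{N})^{-1}\|\le\dist(\lambda,\cC)^{-1}$; unique solvability for each fixed $N$ is the invertibility of $\lambda\Id-T^{N}$.

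For $\lambda\in[-\Lambda_{+},-\tfrac12)\cup(\tfrac12,\Lambda_{+}]$ the well-posedness of the integral equation is immediate from Lemma~\ref{L:AOmegaEx3}: since $\Lambda_{+}\ge\Lambda_{0}>\tfrac12$, every such $\lambda$ satisfies $|\lambda|>\tfrac12$ and therefore lies outside $[-\tfrac12,\tfrac12]$, which contains $\overline{W(A_{\Omega})}$ and hence $\Sp(A_{\Omega})$; thus $\lambda\Id-A_{\Omega}$ is boundedly invertible, with the estimate already recorded in Lemma~\ref{L:AOmegaEx3}.

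The substance of the corollary is the instability of the scheme \eqref{E:DDA2dEx3}, and I would prove the stronger statement that it is unstable for \emph{every} $\lambda\in\Sp(T)=[-\Lambda_{+},\Lambda_{+}]$, which contains the stated range. Since the kernel \eqref{E:KandKhatfora} is real and even, each $T^{N}$ is real symmetric and $T$ is selfadjoint with real symbol $F$, whose essential range is $[-\Lambda_{+},\Lambda_{+}]$ by Lemma~\ref{L:im(Fex3)}; hence $\Sp(T)=[-\Lambda_{+},\Lambda_{+}]$. For selfadjoint $T^{N}$ one has $\|(\lambda\Id-T^{N})^{-1}\|=\dist(\lambda,\Sp(T^{N}))^{-1}$, so instability is equivalent to $\liminf_{N}\dist(\lambda,\Sp(T^{N}))=0$. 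Fix $\lambda\in\Sp(T)$ and $\epsilon>0$. As $\lambda$ lies in the spectrum of the selfadjoint operator $T$, there is a Weyl vector, a unit vector $w\in\ell^{2}(\Z^{d})$ with $\|(T-\lambda)w\|<\epsilon$; localizing in the Fourier variable around a point $\tau_{0}\ne0$ with $F(\tau_{0})=\lambda$ (such a point exists by continuity of $F$ on the connected set $Q\setminus\{0\}$) and truncating the tail, one may take $w$ of finite support. This is the converse of stability, in the spirit of the Galerkin argument invoked after Theorem~\ref{T:Stab1D}.

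The delicate step, which I expect to be the main obstacle, is transplanting $w$ into the moving index window $\omega^{N}$. Because $T$ is translation invariant, every integer translate $w(\cdot-c)$ is again a unit vector with $\|(T-\lambda)w(\cdot-c)\|<\epsilon$. Since $\Omega$ is a domain it contains a ball, and therefore $\omega^{N}$ contains a Euclidean ball in $\Z^{d}$ whose radius grows linearly in $N$; for $N$ large enough one can choose $c=c_{N}$ so that $\operatorname{supp}w(\cdot-c_{N})\subset\omega^{N}$. Setting $v_{N}=w(\cdot-c_{N})$ and using that $T^{N}$ is the compression $P_{\omega^{N}}TP_{\omega^{N}}$, one obtains $\|(T^{N}-\lambda)v_{N}\|=\|P_{\omega^{N}}(T-\lambda)v_{N}\|\le\|(T-\lambda)v_{N}\|<\epsilon$ with $\|v_{N}\|=1$. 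Selfadjointness of $T^{N}$ then forces $\dist(\lambda,\Sp(T^{N}))<\epsilon$, so $\|(\lambda\Id-T^{N})^{-1}\|>\epsilon^{-1}$ for all large $N$; as $\epsilon>0$ is arbitrary, $\sup_{N}\|(\lambda\Id-T^{N})^{-1}\|=\infty$ and the scheme is unstable. The whole difficulty is thus concentrated in ensuring that fixed finite-support approximate eigenvectors fit inside the geometrically constrained grid $\omega^{N}$, which the translation invariance of $T$ together with the interior-ball property of $\Omega$ supplies.
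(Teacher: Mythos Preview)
Your proof is correct and follows the paper's approach. The paper presents Corollary~\ref{C:stabEX3} as a summary without a separate proof: the resolvent estimate is exactly Theorem~\ref{T:stabgeneral} with $\cC=\overline{W(T)}=F(Q)=[-\Lambda_{+},\Lambda_{+}]$ from Lemmas~\ref{L:TbyF} and~\ref{L:im(Fex3)}, well-posedness is Lemma~\ref{L:AOmegaEx3}, and instability is the contrapositive of the ``standard Galerkin argument'' the paper invokes after Corollary~\ref{C:StabTN1D}.

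Your contribution is to make that last argument explicit. The paper's one-line appeal to a Galerkin argument tacitly assumes that uniform stability of the finite sections $T^{N}$ forces invertibility of $T$; you spell this out via finitely supported Weyl vectors and, crucially, observe that the translation invariance of $T$ together with the fact that $\omega^{N}$ contains lattice cubes of side growing linearly in $N$ lets you transplant a \emph{fixed} finite-support approximate eigenvector into $\omega^{N}$ for all large $N$. This detail is genuinely needed, since the index sets $\omega^{N}$ are not nested, and it is only implicit in the paper. Your remark that the argument actually yields instability on all of $\Sp(T)=[-\Lambda_{+},\Lambda_{+}]$, not just on the overshoot intervals, is a correct and slightly sharper formulation than the corollary states.
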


\refstepcounter{Ex}\label{Ex:4}
\subsection{Example 4. Dimension $d=2$, kernel $(x_{1}+ix_{2})^{2}|x|^{-4}$}\label{SS:2da+ib}\ 

Let $d=2$ and $p(x)=-\frac1{1\pi} (a_{12}(x)+2ib_{12}(x))$. The corresponding kernel and its Fourier transform are
\begin{equation}
\label{E:KandKhatforEx4}
 K(x) = \frac{x_{2}^{2}-x_{1}^{2}-2ix_{1}x_{2}}{\pi|x|^{4}}\,,\qquad
 \hat K(\xi) = \frac{(\xi_{1}+i\xi_{2})^{2}}{|\xi_{1}+i\xi_{2}|^{2}}.
\end{equation}
The normalization is chosen so that $|\hat K(\xi)|=1$ for $\xi\in\R^{2}$.
We include this example, which has features combining those of the two preceding examples, mainly for purposes of illustration. Because the singular integral operator and the system matrices of the corresponding delta-delta discretization in this case are non-selfadjoint, we expect to see less trivial relations between spectra and numerical ranges than in the selfadjoint case.

It is obvious from the definition \eqref{E:KandKhatforEx4} that the spectrum $\Sp(A)$ of the operator of convolution with $K$ in $L^{2}(\R^{2})$ is the unit circle 
$\{\xi\in\C\mid |\xi|=1\}$ and that its numerical range is the unit disk. Whereas we do not know the spectrum $\Sp(A_{\Omega})$ for a bounded domain $\Omega\subset\R^{2}$, the numerical range is still the unit disk, compare \eqref{E:WAOmega=WA}, 
\begin{equation}
\label{E:SpW(AOmega)Ex4}
 \Sp(A_{\Omega})\subset W(A_{\Omega}) = W(A) = \{\xi\in\C\mid |\xi|\le1\}.
\end{equation}

For the system matrices $T^{N}$ of the delta-delta discretization scheme, Theorem~\ref{T:stabgeneral} and Lemma~\ref{L:TbyF} provide the following relations.
\begin{equation}
\label{E:SpW(TN)Ex4}
 \Sp(T^{N})\subset W(T^{N}) \subset \overline{W(T)} = \overline{\mathop{\rm conv}}\bigcup_{M\in\N}W(T^{M})\quad\mbox{ and }\quad
 W(A) \subset \overline{W(T)}\,.
\end{equation}

\begin{figure}[h]
\centering
\includegraphics[width=0.49\textwidth]{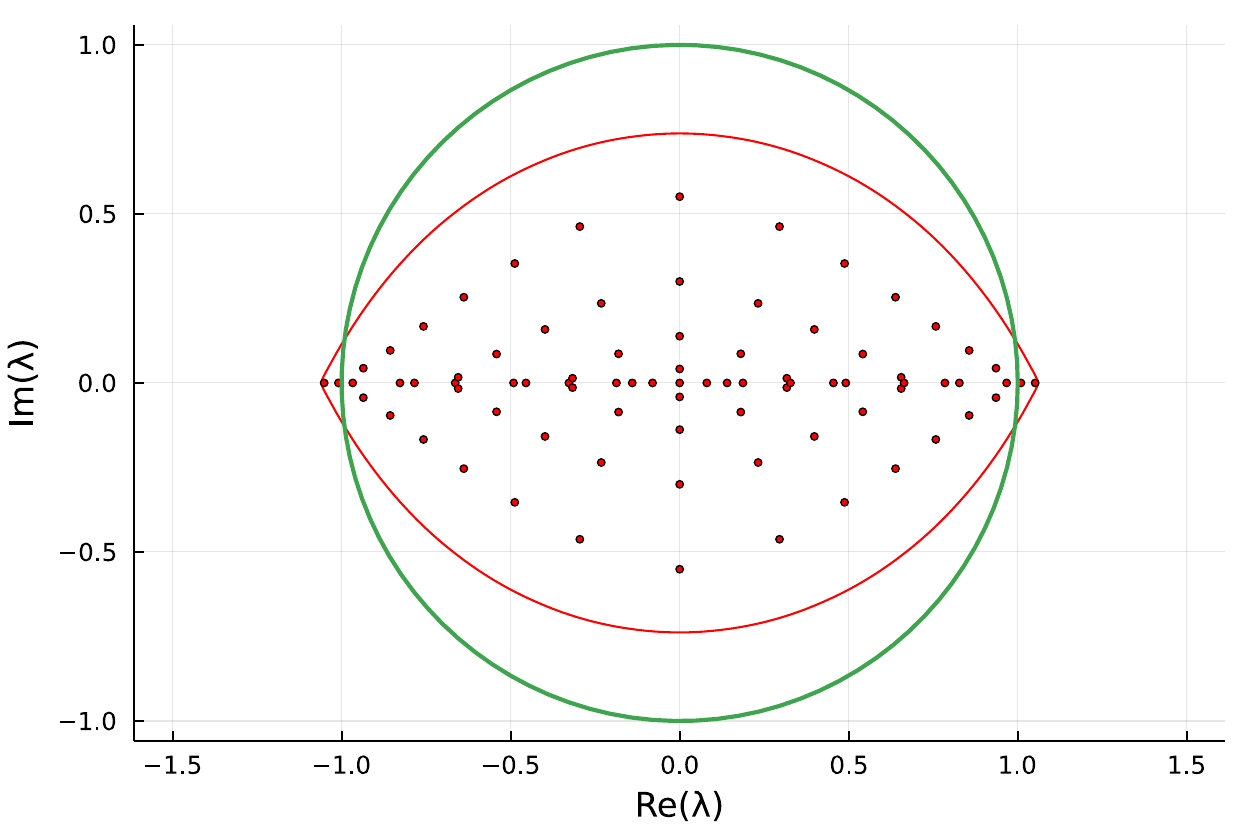}
\includegraphics[width=0.49\textwidth]{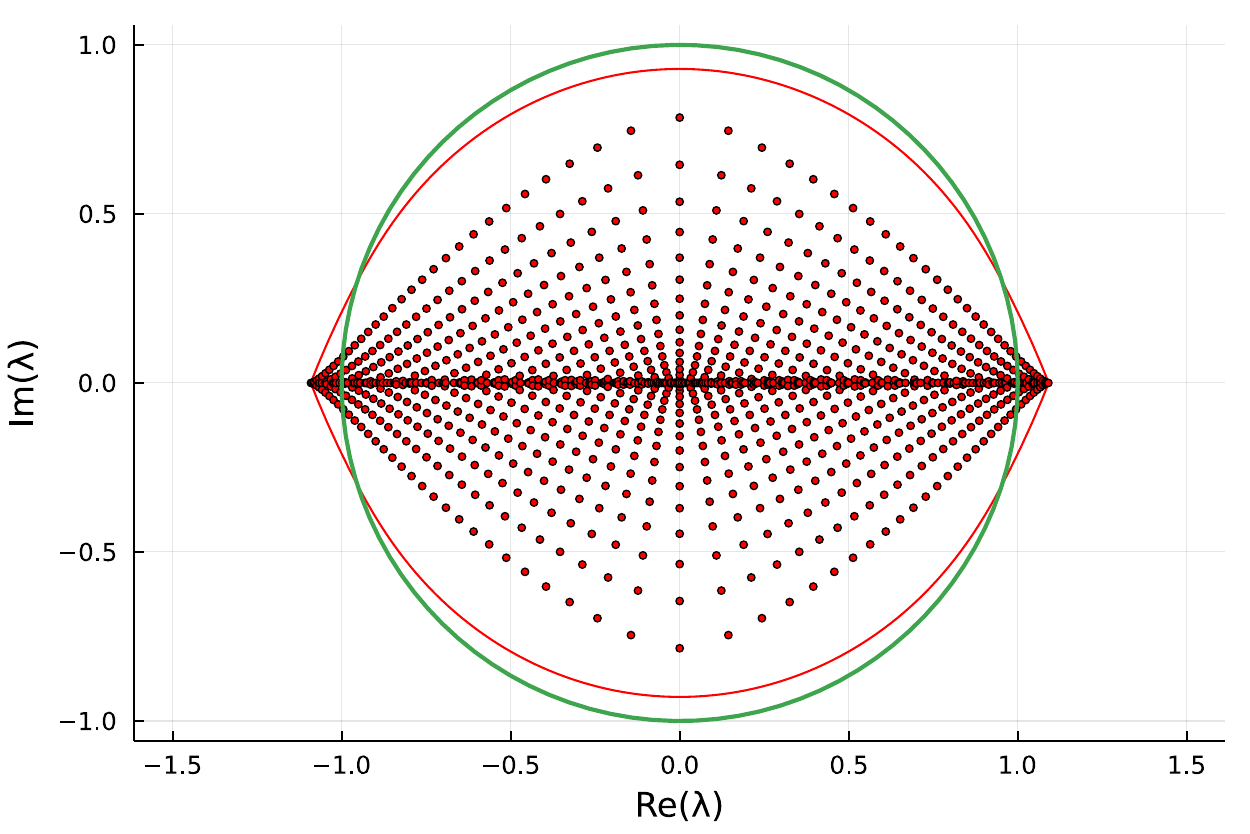}
\caption{Spectrum and numerical range.
Left: $N=8$, right: $N=32$.}
\label{F:Ex4}
\end{figure}

In Figure~\ref{F:Ex4} we show for the case of a square domain $\Omega$ and two values of $N$ the spectrum $\Sp(T^{N})$ (red points), the boundary of the numerical range $W(T^{N})$ (red line), and the unit circle, which is the boundary of $W(A)$ (green line).
We can see the inclusions from \eqref{E:SpW(TN)Ex4} between $\Sp(T^{N})$ and $W(T^{N})$, and we can perceive the asymptotic inclusion of $W(A)$ in $W(T^{N})$ as $N$ tends to infinity.

We can also see that the eigenvalues of the matrices, in contrast to the numerical range, will not fill the whole unit disk asymptotically. On the other hand, we clearly see the overshoot 
$W(T)\setminus W(A_{\Omega})$, that is the region of $\lambda$ where the volume integral equation is uniquely solvable and the operator $\lambda\Id-A_{\Omega}$ is sectorial, so that every $L^{2}$-conforming Galerkin method would converge, whereas the delta-delta scheme is unstable. It appears that the limits for the real part of this overshoot are the same (scaled by a factor $2$) as in the previous example, that is 
$\pm2\Lambda_{0}$ with $\Lambda_{0}$ defined in Lemma~\ref{L:im(Fex3)}.

\refstepcounter{Ex}\label{Ex:5}
\subsection{Example 5. Dimension $d\ge2$. Volume Integral Equation for the Quasi-static Maxwell system}\label{SS:QSM}\ 

In the quasi-static Maxwell volume integral equation (see Section~\ref{SS:DDA}), the right hand side and the solution are $\C^{d}$-valued functions, and the singular integral operator is defined as the matrix of second distributional derivatives of the convolution with the free-space Green function $g$ for the Laplace operator, see equation \eqref{E:VIEdist}. 
If we call this operator $A^{0}$, then it is not the same as the operator $A$ defined by the Cauchy principal value of the integral with the same kernel, but there is a simple relation:
Let 
\begin{equation}
\label{E:KmaxQS}
K(x)= -D^{2}g(x) = \big(-\partial_{i}\partial_{j}g(x)\big)_{i,j\in1,\dots,d} \quad\mbox{ for } x\ne0\,.
\end{equation}
Then
\begin{equation}
\label{E:D2VSPV}
A^{0}u(x) = -\nabla\div\int_{\R^{d}}g(x-y)u(y)dy =
 \pv\!\!\int_{\R^{d}} K(x-y)u(y)dy  + \frac1d u(x) =
 (A+\frac1d\Id)u(x)\,.
\end{equation}
This is most easily seen by first using the symmetries of the kernel with respect to reflections at coordinate axes and permutations of the variables in order to deduce that the distribution kernel of $A^{0}-A$ must be a scalar multiple of the $d\times d$ identity matrix 
$\Id_{d}$ times the Dirac distribution $\delta_{0}$, and then determining this multiple by taking traces:
$\mathop{\mathrm{tr}}(-D^{2}g)=-\Delta g =\delta_{0}= \mathop{\mathrm{tr}}(\frac1d \Id_{d}\delta_{0})$.

\subsubsection{The singular integral equation}\label{SSS:SIOEx5}
We consider the strongly singular integral equation, still written as 
$(\lambda \Id -A_{\Omega})u=f$,
\begin{equation}
\label{E:VIOMQS}
 \lambda u(x) - \pv\!\! \int_{\Omega}K(x-y) u(y)\,dy = f(x) \quad
 \mbox{ with } K \mbox{ given in \eqref{E:KmaxQS}}\,.
\end{equation}
The function space is now $L^{2}(\Omega;\C^{d})$.

Let us note the explicit form of the kernel, valid in any dimension $d\ge2$, where we consider points in $\R^{d}$ as column vectors,
\begin{equation}
\label{E:Kexplicit}
 K(x) = -\frac1{\nu_{d}} (x\,x^{\top}-\frac1d\Id_{d}|x|^{2})\,|x|^{-d-2}\,,
  \quad\mbox{ with }\;
  \nu_{d} = \frac{2\pi^{\frac d2}}{d\,\Gamma(\frac d2)}\,.\end{equation}
The simplest way to see this is to first look at the symbol of the operator. For this we employ $d$-dimensional Fourier transformation and use the fact that $\hat g(\xi)=|\xi|^{-2}$, hence
\begin{equation}
\label{E:SymMQS}
 \sF(-D^{2}g)(\xi) = \frac{\xi\,\xi^{\top}}{|\xi|^{2}} \quad\mbox{ and }\;
 \hat K(\xi) = \frac{\xi\,\xi^{\top}-\frac1d\Id_{d}|\xi|^{2}}{|\xi|^{2}}\,.
\end{equation}
We check that $\mathop{\mathrm{tr}}\hat K=0$ and that $\hat K$ satisfies the spherical cancellation condition. Indeed, in the notation of Lemma~\ref{L:Khat}, the (matrix-valued) polynomial $p(\xi)$ is given by
\begin{equation}
\label{E:pxiMQS}
  p(\xi)= -\frac1{\nu_{d}} (\xi\,\xi^{\top}-\frac1d\Id_{d}|\xi|^{2})\,.
\end{equation}
Thus the off-diagonal elements of the matrix $p(x)$ are given by
$$
  -\frac1{\nu_{d}}x_{j}x_{k} = -\frac{b_{jk}(x)}{\nu_{d}}, 
$$ 
and the diagonal elements by
$$
 -\frac1{\nu_{d}}(x_{k}^{2}-\frac1d|x|^{2})= \frac1{d\,\nu_{d}}\sum_{j=1}^{d}a_{jk}(x)\,,
$$
compare Remark~\ref{R:simple}.

From our formulas of Section~\ref{SSS:Homog} we find the explicit form \eqref{E:Kexplicit} for our kernel. For $d=2$, we have $\nu_{d}=\pi$, and we recognize the kernels studied in the Examples~\ref{Ex:2} and \ref{Ex:3}.

The matrix $\frac{\xi\,\xi^{\top}}{|\xi|^{2}}$ is an orthogonal projection matrix, hence its numerical range is the interval $[0,1]$. Therefore $W(\hat K(\xi))=[-\frac1d,1-\frac1d]$ for any $\xi\ne0$. We immediately get the following instance of Proposition~\ref{P:W(AOmega)}.
\begin{lemma}
 \label{L:WAMQS}
 Let $\cC=[-\frac1d,1-\frac1d]$. Then for all $\lambda\not\in\cC$ and any $f\in L^{2}(\Omega;\C^{d})$, the integral equation \eqref{E:VIOMQS} has a unique solution $u\in L^{2}(\Omega;\C^{d})$, and there is a resolvent estimate in the $L^{2}(\Omega;\C^{d})$ operator norm
\begin{equation}
\label{E:resAOmegaMQS}
 \|(\lambda\Id-A_{\Omega})^{-1}\| \le \dist(\lambda,\cC)^{-1}\,.
\end{equation} 
\end{lemma}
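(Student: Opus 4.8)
The plan is to read this lemma as the matrix-valued instance of Proposition~\ref{P:W(AOmega)}, following the adaptation spelled out in Section~\ref{SS:matrix}. In that framework the relevant object is no longer the scalar value $\hat K(\xi)$ but the numerical range $W(\hat K(\xi))$ of the $d\times d$ matrix $\hat K(\xi)$, and the governing identity is \eqref{E:NumranGen}, which says that $\overline{W(A)}$ is the closed convex hull of $\bigcup_{\xi}W(\hat K(\xi))$. Thus the entire argument reduces to a single computation: determining $W(\hat K(\xi))$ and checking that it is contained in $\cC$. First I would therefore fix the candidate set $\cC=[-\tfrac1d,1-\tfrac1d]$ and aim to verify the hypothesis $W(\hat K(\xi))\subset\cC$ for every $\xi\in\SS^{d-1}$.

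The one computation to carry out uses \eqref{E:SymMQS}: one writes $\hat K(\xi)=P_{\xi}-\tfrac1d\Id_{d}$, where $P_{\xi}=\xi\,\xi^{\top}/|\xi|^{2}$ is the orthogonal projection onto the line $\R\xi$. Being a rank-one orthogonal projection, $P_{\xi}$ is Hermitian with spectrum $\{0,1\}$ (eigenvalue $1$ on $\R\xi$, eigenvalue $0$ on the orthogonal complement), so its numerical range is the convex hull $[0,1]$ of its eigenvalues. Subtracting the scalar matrix $\tfrac1d\Id_{d}$ merely translates the numerical range, giving $W(\hat K(\xi))=[-\tfrac1d,1-\tfrac1d]=\cC$. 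Since $\hat K$ is homogeneous of degree $0$, this value is independent of $\xi\ne0$, so the required inclusion holds on all of $\SS^{d-1}$, in fact with equality.

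To assemble the conclusion I would invoke the matrix form of the numerical-range machinery of Section~\ref{SSS:Homog}. By Plancherel and the Fourier-multiplier structure \eqref{E:FMul}, the Rayleigh quotient $(u,Au)/(u,u)$ equals a $|\hat u(\xi)|^{2}$-weighted average of the pointwise scalars $\overline{\hat u(\xi)}^{\top}\hat K(\xi)\hat u(\xi)/|\hat u(\xi)|^{2}\in W(\hat K(\xi))=\cC$, hence lies in $\cC$ by convexity; this gives $\overline{W(A)}\subset\cC$. Monotonicity of the numerical range under restriction to the subspace $L^{2}(\Omega;\C^{d})$ then yields $\overline{W(A_{\Omega})}\subset\overline{W(A)}\subset\cC$. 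Finally, for $\lambda\notin\cC$ the standard resolvent bound recalled after Lemma~\ref{L:Sp+W(A)}, together with $\dist(\lambda,\overline{W(A_{\Omega})})\ge\dist(\lambda,\cC)$, gives both unique solvability of \eqref{E:VIOMQS} and the estimate \eqref{E:resAOmegaMQS}.

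There is no serious obstacle here: the only substantive step is recognizing $\hat K(\xi)$ as a shifted rank-one orthogonal projection and reading off its numerical range as $[0,1]$, after which everything is bookkeeping already established in Sections~\ref{SS:kernels} and~\ref{SS:matrix}. The single point demanding a little care is purely conceptual, namely that the numerical range in play is that of the matrix $\hat K(\xi)$ acting on $\C^{d}$ rather than the scalar image of $\hat K$, and correspondingly that the Rayleigh quotient must be treated as a weighted average of the \emph{matrix} numerical ranges $W(\hat K(\xi))$.
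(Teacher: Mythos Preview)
Your proposal is correct and follows essentially the same approach as the paper: recognize that $\hat K(\xi)=P_{\xi}-\tfrac1d\Id_{d}$ with $P_{\xi}$ a rank-one orthogonal projection, read off $W(\hat K(\xi))=[0,1]-\tfrac1d=\cC$, and then invoke the matrix-valued version of Proposition~\ref{P:W(AOmega)} as set up in Section~\ref{SS:matrix}. The paper states this in one line just before the lemma; your expansion of the Plancherel/Rayleigh-quotient step merely makes explicit what is already packaged there.
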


\subsubsection{The discrete system}\label{SS:DA2dEx5}
With the $d\times d$ matrix-valued kernel $K$ and vector-valued functions $u$ and $f$, we can write the delta-delta discretization $(\lambda\Id - T^{N})U=F$ of the integral equation \eqref{E:VIOMQS} in the same form as in the scalar case
\begin{equation}
\label{E:DDAEx5}
 \lambda u_{m} - N^{-d}\!\!\!\!\sum_{n\in\omega^{N},m\ne n}K(x^{N}_{m}-x^{N}_{n})u_{n}= f(x^{N}_{m}) \,,\quad (m\in\omega^{N})\,,
\end{equation}
where now the system matrix $T^{N}$ is of size $d|\omega^{N}|\times d|\omega^{N}|$ and is considered as a linear operator in $\ell^{2}(\omega^{N};\C^{d})$.

We recall the discussion of matrix-valued kernels in Section~\ref{SS:matrix} above, in particular the properties of the numerical range stated in Lemma~\ref{L:TbyFEx5}.

The basic stability estimate follows.
\begin{proposition}
 \label{P:W(F)Ex5}
Let $K$ be the kernel defined in \eqref{E:KmaxQS}, \eqref{E:Kexplicit}. Then there exist 
$\Lambda_{-}^{(d)},\Lambda_{+}^{(d)}\in\R$ with 
\begin{equation}
\label{E:Lambda+-Ex5}
\Lambda_{-}^{(d)}\le-\frac1d\,,\qquad \Lambda_{+}^{(d)}\ge 1-\frac1d
\end{equation}
such that the following holds.\\
(i)
For $\tau\in Q=[-\pi,\pi]^{d}$, $\tau\ne0$, $F(\tau)$ is a real symmetric matrix with eigenvalues contained in the interval $\cC=[\Lambda_{-}^{(d)},\Lambda_{+}^{(d)}]$,
\begin{equation}
\label{E:Lambda+-SpF}
\Lambda_{-}^{(d)} = \inf_{\tau\in Q}\min(\Sp(F(\tau))\,,\qquad
\Lambda_{+}^{(d)} = \sup_{\tau\in Q}\max(\Sp(F(\tau))\,.
\end{equation}
(ii) 
For any $N$, the numerical range $W(T^{N})$ is contained in $W(T)=\cC$.\\
\begin{equation}
\label{E:Lambda+-SpT}
\Lambda_{-}^{(d)} = \inf_{N\in\N}\min(\Sp(T^{N}))\,,\qquad
\Lambda_{+}^{(d)} = \sup_{N\in\N}\max(\Sp(T^{N}))\,.
\end{equation}(iii)
The delta-delta scheme \eqref{E:DDAEx5} is stable if and only if $\lambda\in\C\setminus\cC$,
and one has the stability estimate in the $\ell^{2}(\omega^{N};\C^{d})$ operator norm
\begin{equation}
\label{E:ResTNex5}
 \|(\lambda\Id-T^{N})^{-1}\| \le \dist(\lambda,\cC)^{-1}\,.
\end{equation}
\end{proposition}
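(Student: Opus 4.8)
The plan is to treat Proposition~\ref{P:W(F)Ex5} as the matrix-valued specialization of the general stability machinery already assembled in Proposition~\ref{P:NumSym} and Lemma~\ref{L:TbyFEx5}, the only genuinely new inputs being the precise identification of the interval $\cC$ and, for the backward direction of (iii), the absence of spectral gaps.

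First I would record the structural properties of $F(\tau)$. Since $K(x)=-D^{2}g(x)$ is a Hessian, each $K(m)$ is a real symmetric matrix with $K(-m)=K(m)$; pairing $m$ with $-m$ in \eqref{E:F(t)Ex5} shows that $F(\tau)$ is real and symmetric for every $\tau$, and $\operatorname{tr}K(m)=-\Delta g(m)=0$ for $m\ne0$ shows it is trace-free. Applying Proposition~\ref{P:NumSym} entrywise (by Remark~\ref{R:simple} each entry is of the form $p\,|x|^{-d-2}$ with $p$ satisfying \eqref{E:Kvanish}) gives $F=\hat K+F_{0}$ with $F_{0}$ analytic on $Q$, $F_{0}(0)=0$, and $F$ bounded, so the eigenvalues of $F(\tau)$ stay in a fixed compact subset of $\R$ and $\Lambda_{\pm}^{(d)}$ in \eqref{E:Lambda+-SpF} are finite. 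Because $\tfrac{\xi\xi^{\top}}{|\xi|^{2}}$ is a rank-one orthogonal projection, $\hat K(\xi)$ has eigenvalues $1-\tfrac1d$ (simple) and $-\tfrac1d$ (multiplicity $d-1$), so $W(\hat K(\xi))=[-\tfrac1d,1-\tfrac1d]$ independently of $\xi$; letting $\tau\to0$ along a fixed ray and using $F_{0}(0)=0$ with continuity of eigenvalues yields \eqref{E:Lambda+-Ex5} and proves (i).

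For (ii), $W(T^{N})\subset W(T)$ is the monotonicity of the numerical range under the compression $T^{N}=P_{N}TP_{N}$, while Lemma~\ref{L:TbyFEx5} identifies $\overline{W(T)}$ with the convex hull of $\bigcup_{\tau}W(F(\tau))$, which by (i) is exactly $\cC=[\Lambda_{-}^{(d)},\Lambda_{+}^{(d)}]$. The resolvent estimate \eqref{E:ResTNex5} for $\lambda\notin\cC$ then follows from the numerical-range bound $\|(\lambda\Id-T^{N})^{-1}\|\le\dist(\lambda,W(T^{N}))^{-1}\le\dist(\lambda,\cC)^{-1}$ recalled in Section~\ref{SSS:Homog}; this is also the forward implication of (iii). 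For the equalities \eqref{E:Lambda+-SpT} I would use a concentrating (Weyl) sequence: given $\tau_{0}$ and a unit eigenvector $v_{0}$ of $F(\tau_{0})$ for the eigenvalue $\lambda_{0}$, set $u^{(N)}_{m}=c_{N}\,v_{0}\,e^{im\cdot\tau_{0}}\chi(m/R_{N})$ on $\omega^{N}$ with a smooth cutoff $\chi$ and $R_{N}\to\infty$ chosen so the support stays in $\omega^{N}$; then $\tilde u^{(N)}$ concentrates in the eigendirection, and the Parseval identity \eqref{E:formTNEx5} together with $\|(\lambda_{0}\Id-T^{N})u^{(N)}\|=\|P_{N}(\lambda_{0}\Id-T)u^{(N)}\|\le\|(\lambda_{0}\Id-F)\tilde u^{(N)}\|_{L^{2}}\to0$ gives $\dist(\lambda_{0},\Sp(T^{N}))\to0$. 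Choosing $\tau_{0}$ to nearly realize $\Lambda_{\pm}^{(d)}$ furnishes the reverse inequalities to $\Sp(T^{N})\subset\cC$, hence \eqref{E:Lambda+-SpT}.

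The remaining and hardest point is the backward implication of (iii): instability for \emph{every} $\lambda\in\cC$, not merely on the accumulation set of the finite-section eigenvalues. The Weyl-sequence argument already proves instability for all $\lambda\in\overline{\bigcup_{\tau}\Sp(F(\tau))}=\Sp(T)$, so the whole difficulty reduces to showing this set has \emph{no gaps}, i.e.\ $\Sp(T)=\cC$. For $d=2$ this is clean: trace-freeness forces eigenvalues $\pm\mu(\tau)$ with $\mu=\sqrt{F_{11}^{2}+F_{12}^{2}}$ continuous on the connected set $Q\setminus\{0\}$, and $\mu$ vanishes at the corner $(\pi,\pi)$ since there $F_{12}$ vanishes (it is the Example~\ref{Ex:2} symbol, which vanishes on $\partial Q$) and $F_{11}$ vanishes (it is the Example~\ref{Ex:3} symbol, odd under $\tau_{1}\leftrightarrow\tau_{2}$); hence $\mu$ sweeps $[0,\Lambda_{+}^{(2)}]$ by the intermediate value theorem and $\Sp(T)=[-\Lambda_{+}^{(2)},\Lambda_{+}^{(2)}]=\cC$. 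For $d\ge3$ the spectrum clusters near $\tau=0$ at the two points $-\tfrac1d$ and $1-\tfrac1d$, leaving an apparent gap around $0$, and one must prove that the middle eigenvalue branches, as $\tau$ ranges over $Q\setminus\{0\}$, move continuously across the interval and cover it. This is where I expect the main work to lie: I would exploit the sign-flip and permutation symmetries of $K$, which on symmetry axes such as $\tau=(t,0,\dots,0)$ diagonalize $F$ and force eigenvalue coincidences of the form $\{F_{11},-\tfrac12 F_{11},\dots\}$, to connect the branches and rule out gaps, thereby closing the argument for $d=2,3$, with the Ewald evaluation of $F$ serving as the numerical verification in the case $d=3$.
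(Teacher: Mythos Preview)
Your overall strategy is the same as the paper's: reduce everything to the symmetry properties of $K$ (hence of $F$) and then invoke the general machinery of Proposition~\ref{P:NumSym}, Lemma~\ref{L:TbyFEx5} and Theorem~\ref{T:stabgeneral}. The paper's own proof is in fact just two sentences: it records that $K(x)$ is symmetric and even, hence $F(\tau)$ is real symmetric, and then declares that ``all the other statements are instances of'' the general framework. Your derivation of \eqref{E:Lambda+-Ex5} via $\tau\to0$ and $F_{0}(0)=0$ is a correct explicit version of the inclusion $W(A)\subset\cC$ already proved right after Proposition~\ref{P:NumSym}.

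Two comments on places where you work harder than necessary. For \eqref{E:Lambda+-SpT} no Weyl sequence is needed: once you know each $T^{N}$ is Hermitian, $W(T^{N})=[\min\Sp(T^{N}),\max\Sp(T^{N})]$, and Lemma~\ref{L:TbyFEx5} gives $\overline{\bigcup_{N}W(T^{N})}=\cC$; taking the supremum of the right endpoints and the infimum of the left endpoints of these nested-in-$\cC$ intervals yields \eqref{E:Lambda+-SpT} directly. This is evidently what the paper has in mind.

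Where you go genuinely beyond the paper is the backward implication of (iii). You are right that the cited general statements only deliver the forward direction (stability for $\lambda\notin\cC$) and the identity $\cC=\overline{\bigcup_{N}W(T^{N})}$; they do \emph{not} by themselves force $\inf_{N}\dist(\lambda,\Sp(T^{N}))=0$ for every $\lambda\in\cC$, which is what ``unstable'' means here. The paper's proof simply does not address this point. Your proposed remedy --- show that $\bigcup_{\tau}\Sp(F(\tau))$ has no gaps, i.e.\ $\Sp(T)=\cC$, and then use a Weyl sequence --- is a sound sufficient condition, and your $d=2$ argument (vanishing of $\mu(\tau)=\sqrt{F_{11}^{2}+F_{12}^{2}}$ at the corner $(\pi,\pi)$ by the parity of $F_{11}$ under $\tau_{1}\leftrightarrow\tau_{2}$ and the vanishing of $F_{12}$ on $\partial Q$) is clean and correct. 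For $d\ge3$ your plan to exploit the cubic symmetries to track eigenvalue branches along coordinate axes is reasonable but, as you acknowledge, incomplete; note that on the axis $\tau=(t,0,\dots,0)$ the matrix $F(\tau)$ is diagonal with the $d-1$ equal off-axis entries giving the repeated branch $-\tfrac{1}{d-1}F_{11}$, which together with $F_{11}$ does sweep through $0$ at $t=0$, so the connectivity argument can indeed be made to work for $d=3$ along these lines. In short: your proof matches the paper's for everything the paper actually proves, and you correctly flag and partially repair an ``only if'' that the paper asserts but does not justify.
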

\begin{proof}
The matrix $K(x)$ is symmetric for $x\ne0$, implying that also $F(\tau)$ is a symmetric matrix for $\tau\ne0$. The symmetry $K(-x)=K(x)$ implies that the matrix elements of $F(\tau)$ are real.
Therefore the numerical range of $F(\tau)$ is the interval $[\lambda_{-}(\tau),\lambda_{+}(\tau)]$, where 
$$
  \lambda_{-}(\tau) = \min(\Sp(F(\tau))\,,\qquad
\lambda_{+}(\tau) = \max(\Sp(F(\tau))\,.
$$
This justifies \eqref{E:Lambda+-SpF}.
All the other statements of the proposition are instances of the statements of Section~\ref{SS:DDD}, in particular Theorem~\ref{T:stabgeneral}, and their proofs in Section~\ref{S:discrete}, based on Ewald's method.
\end{proof}
What remains is to get information on the numbers $\Lambda_{\pm}^{(d)}$ and to see whether the inequalities \eqref{E:Lambda+-Ex5} are strict. In that case,
for $\lambda\in[\Lambda_{-}^{(d)},-\frac1d)\cup(1-\frac1d,\Lambda_{+}^{(d)}]$,
the integral equation is well-posed in $L^{2}(\Omega;\C^{d})$, but the delta-delta discretization scheme is unstable in $\ell^{2}(\Omega^{N};\C^{d})$.

We will discuss this for the practically relevant cases $d=2$, where we get rather precise information, and $d=3$, which is the most important case because of its relevance for the DDA method in computational electromagnetics.
  
\subsubsection{Dimension $d=2$}.
Here the numerical symbol has the form
$$
 F(\tau) = 
\begin{pmatrix}
 a(\tau) & b(\tau)\\
 b(\tau) & -a(\tau)
\end{pmatrix}
$$
with real-valued functions $a$ and $b$. The eigenvalues are
$\lambda_{\pm}(\tau)=\pm\sqrt{a(\tau)^{2}+b(\tau)^{2}}$, implying 
$\Lambda_{-}^{(d)}=-\Lambda_{+}^{(d)}$.
The functions $a$ and $b$ have been studied in the previous examples, $a$ in Example~\ref{Ex:3} and $b$ in Example~\ref{Ex:2}.

In particular, $b(\tau)=0$ for $\tau\in\partial Q$, and therefore
\begin{equation}
\label{E:lambda+=Lambda0}
  \mbox{ for }\:\tau=(\pi,0),\quad \lambda_{+}(\tau) = a(\tau) = \Lambda_{0}
\end{equation}
with the number $\Lambda_{0}=0.5471...$ encountered in Example \ref{Ex:3}, Lemma~\ref{L:im(Fex3)}.
This implies 
$$\Lambda_{+}^{(2)}\ge\Lambda_{0},$$ 
and we are in the same situation as in Example~\ref{Ex:3}:
Strong numerical evidence suggests that the function $\lambda_{+}$ attains its maximum in the point $\tau=(\pi,0)$ and therefore $\Lambda_{+}^{(2)}=\Lambda_{0}$, but we do not have a formal proof for this. The positive eigenvalue $\lambda_{+}$ is plotted in Figure~\ref{F:Ex52D}.
\begin{figure}[h]
\centering
\includegraphics[width=0.6\textwidth]{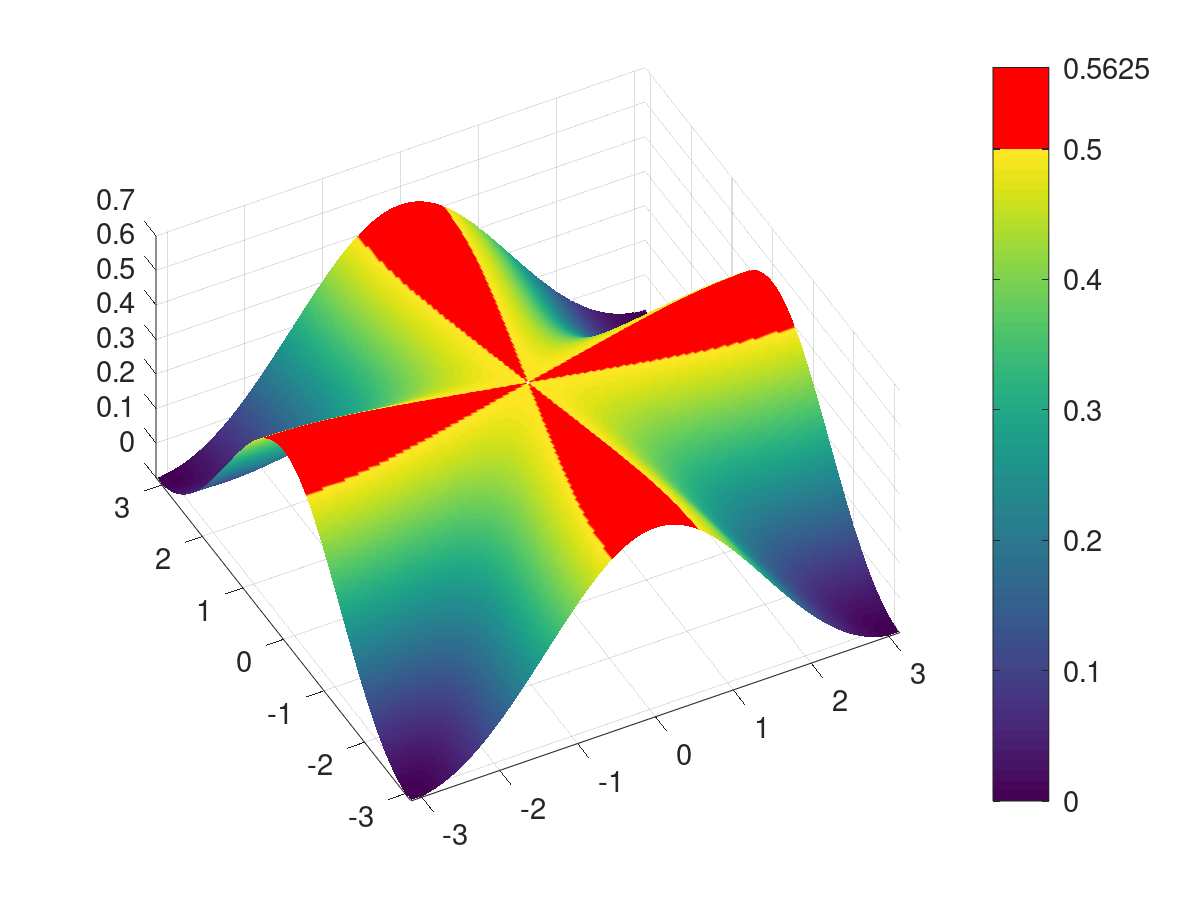}
\caption{$d=2$. Eigenvalue $\lambda_{+}$ on $Q$.}
\label{F:Ex52D}
\end{figure}

In any case, we have proved that in dimension $d=2$ for any bounded open set $\Omega\subset\R^{2}$ and any
$\lambda\in(-0.5471,-0.5)\cup(0.5,0.5471)$ the delta-delta scheme 
$(\lambda\Id-T^{N})U=F$ is unstable in $\ell^{2}(\Omega^{N};\C^{2})$ as $N\to\infty$, whereas the integral equation $(\lambda\Id-A_{\Omega})u=f$ is well posed in $L^{2}(\Omega;\C^{2})$.

\subsubsection{Dimension $d=3$}.
The three eigenvalues $\lambda_{j}$ of $F(\tau)$ satisfy 
$\lambda_{1}+\lambda_{2}+\lambda_{3}=0$.
Numerically, one sees that the minimal and maximal values are attained on the intersection of the boundary of $Q=[-\pi,\pi]^{3}$ with the coordinate planes. 
In Figure~\ref{F:3EVs} we show a graph of the three eigenvalues on the line $\{(\pi,y,0)\mid y\in[-\pi,\pi]\}$. The values $-\frac13$ and $\frac23$ are shown as dashed lines.
\begin{figure}[h]
\centering
\includegraphics[width=0.6\textwidth]{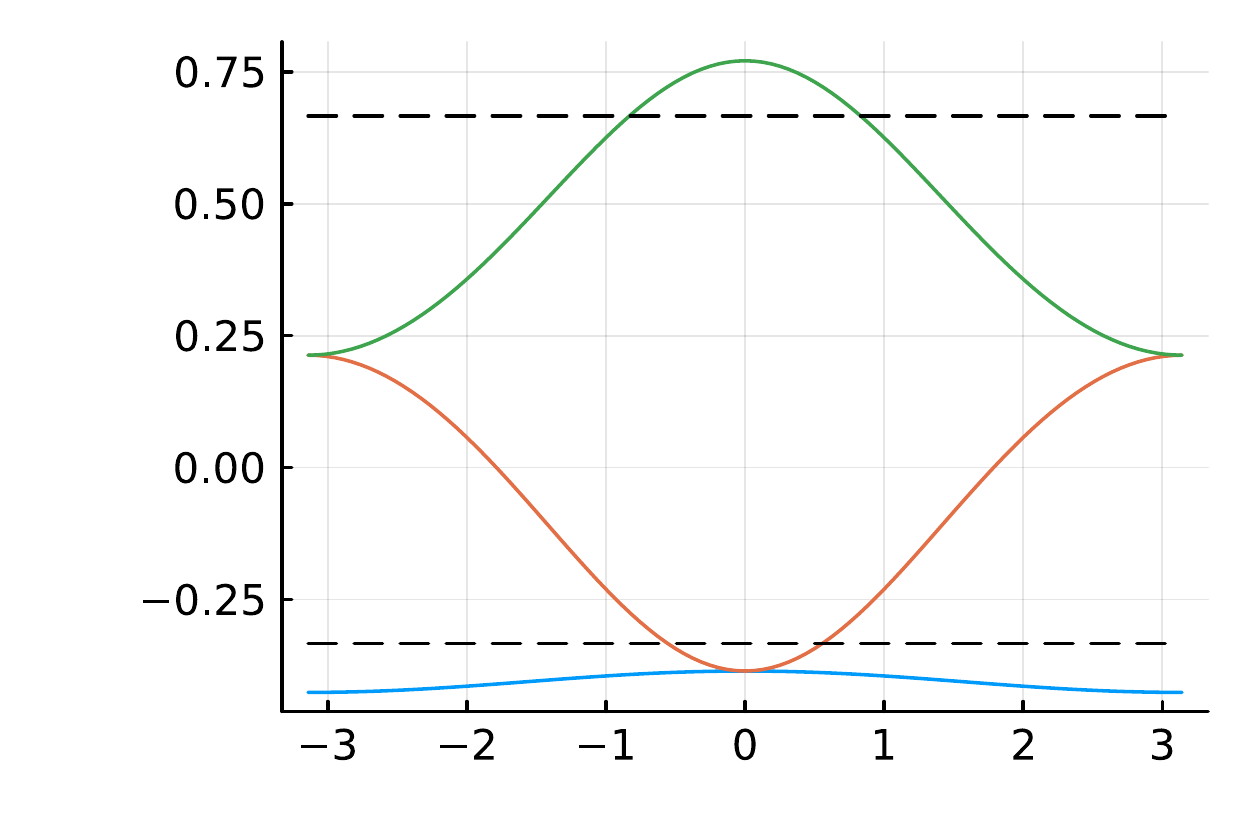}
\caption{$d=3$. Eigenvalues of $F(\tau)$ on middle line of face of $Q$.}
\label{F:3EVs}
\end{figure}

This suggests 
$\Lambda_{-}^{(3)}=\min\Sp(F((\pi,\pi,0)))$ and $\Lambda_{+}^{(3)}=\max\Sp(F((\pi,0,0)))$.

The computed values are 
\begin{equation}
\label{E:Lambda+-3}
\Lambda_{-}^{(3)}=-0.4260241507272727\,,
\qquad
\Lambda_{+}^{(3)}=0.7709022227747195\,.
\end{equation}
This implies a length of $W(T)$ of 
$\Lambda_{+}^{(3)}-\Lambda_{-}^{(3)}=1.1969263735019922$ instead of $1$, which is the length of $W(A)$, thus an overshoot of almost $20\%$.

Under this assumption, one can write simple series expansions for the numbers $\Lambda_{\pm}^{(3)}$. If all the coordinates of $\tau$ are $0$ or $\pi$, then the off-diagonal elements of the matrix $F(\tau)$ vanish and the $3$ eigenvalues are the diagonal elements. Therefore the Fourier series for $F(\tau)$ gives
\begin{equation}
\label{E:Fpm3series}
 \Lambda_{-}^{(3)} = \!\!\!\sum_{m\in\Z^{3},m\ne0} \!\!\!
   \frac{(-1)^{m_{1}+m_{2}}}{4\pi} \frac{m_{1}^{2}+m_{2}^{2}-2m_{3}^{2}}{(m_{1}^{2}+m_{2}^{2}+m_{3}^{2})^{\frac52}}\,,\quad
 \Lambda_{+}^{(3)} = \!\!\!\sum_{m\in\Z^{3},m\ne0} \!\!\!
   \frac{(-1)^{m_{3}}}{4\pi} \frac{m_{1}^{2}+m_{2}^{2}-2m_{3}^{2}}{(m_{1}^{2}+m_{2}^{2}+m_{3}^{2})^{\frac52}}\,.
\end{equation}
These sums, although not absolutely convergent, appear to converge quite well in the sense of partial sums over cubes,
$$
  \sum_{m\in\Z^{3},m\ne0} = \lim_{N\to \infty}\sum_{\max\limits_{j}|m_{j}|\le N,m\ne0} \,.
$$
We do not know whether explicit expressions for these sums exist.

By means of the Clausius-Mossotti relation \eqref{E:ClauMoss} one can express the stability results equivalently in terms of the relative permittivity $\epsilon_{r}$. Let
\begin{equation}
\label{E:epsminmax}
 \epsilon_{\min} = \frac{3\Lambda_{+}-2}{1+3\Lambda_{+}}=0.0943961\dots
 \,,\qquad
 \epsilon_{\max} = \frac{3\Lambda_{-}-2}{1+3\Lambda_{-}}=11.788555\dots\,.
\end{equation}
The numerical range $\lambda\in[-\frac13,\frac23]$ of the quasi-static Maxwell volume integral operator corresponds to $\epsilon_{r}\le0$. The volume integral equation is therefore well posed in $L^{2}(\Omega)$ if the relative permittivity $\epsilon_{r}$ is either non-real or positive.

On the other hand, the corresponding DDA scheme is stable in $\ell^{2}(\Z^{3})$ if and only if $\epsilon_{r}$ is either non-real or contained in the interval 
$(\epsilon_{\min},\epsilon_{max})$. 
For $\epsilon_{r}\in(0,\epsilon_{\min}]\cup[\epsilon_{\max},\infty)$ the integral equation (and therefore the dielectric scattering problem) is well-posed, but the DDA scheme is unstable. 

To conclude this discussion, we show in Table~\ref{T:maxminEVEx5} the result of some computations for the spectrum of the system matrix $T^{N}$ for a cube in three dimensions. 
One can see convergence to the expected values~\eqref{E:Lambda+-3}, even for rather modest values of $N$. Compare also \cite[FIG. 8]{YurkinMinHoekstra2010}.
\begin{table}[htbp]
   \centering
    \begin{tabular}{@{} rccc @{}} 
      \toprule
      $N$    &  $\lambda_{\max}(T^{N})$ & $\lambda_{\min}(T^{N})$ & 
                                            $\lambda_{\max}(T^{N})-\lambda_{\min}(T^{N})$\\
      \midrule
4 & 0.67730278666935 & -0.3896455148525014 & 1.06694830152185\\
8 & 0.73653727456221 & -0.4130173055963489 & 1.14955458015856\\
12 & 0.75323748914578 &  -0.4193953119966648 & 1.17263280114245\\
16 & 0.76017444184544 & -0.4220149407429199 & 1.18218938258836\\
      \bottomrule
   \end{tabular}\vglue1ex
   \caption{Computation of $\max(\Sp(T^{N}))$ and $\min(\Sp(T^{N}))$, Example \ref{Ex:5}}
   \label{T:maxminEVEx5}
\end{table}

\subsection*{Acknowledgment}
This work was partially supported by a grant from the \textit{Niels Hendrik Abel Board}.
The authors acknowledge support of the Centre Henri Lebesgue ANR-11-LABX-0020-01.

\bibliographystyle{acm}
\bibliography{database.bib}                        

\begin{thebibliography}{10}

\bibitem{Abramowitz-Stegun_1964}
{\sc {Abramowitz}, M., and {Stegun}, I.~A.}, Eds.
\newblock {\em {Handbook of mathematical functions with formulas, graphs and
  mathematical tables}}.
\newblock {Washington: U.S. Department of Commerce. xiv, 1046 pp.}, 1964.

\bibitem{Ammari-et-al_2018}
{\sc {Ammari}, H., {Fitzpatrick}, B., {Kang}, H., {Ruiz}, M., {Yu}, S., and
  {Zhang}, H.}
\newblock {\em {Mathematical and computational methods in photonics and
  phononics}}, vol.~235.
\newblock Providence, RI: American Mathematical Society (AMS), 2018.

\bibitem{Chaumet_M_22}
{\sc Chaumet, P.~C.}
\newblock The discrete dipole approximation: A review.
\newblock {\em Mathematics 10}, 17 (2022).

\bibitem{CoDarSak2012}
{\sc Costabel, M., Darrigrand, E., and Sakly, H.}
\newblock The essential spectrum of the volume integral operator in
  electromagnetic scattering by a homogeneous body.
\newblock {\em Comptes Rendus Math\'ematique 350\/} (2012), 193--197.

\bibitem{CoDarSak2015}
{\sc Costabel, M., Darrigrand, E., and Sakly, H.}
\newblock Volume integral equations for electromagnetic scattering in two
  dimensions.
\newblock {\em Comput. Math. Appl. 70}, 8 (2015), 2087--2101.

\bibitem{Essmann-et-al_1995}
{\sc Essmann, U., Perera, L., Berkowitz, M.~L., Darden, T., Lee, H., and
  Pedersen, L.~G.}
\newblock {A smooth particle mesh Ewald method.}
\newblock {\em {J. Chem. Phys.} 103\/} (1995), 8577--8593.

\bibitem{Ewald1921}
{\sc {Ewald}, P.~P.}
\newblock {Die Berechnung optischer und elektrostatischer Gitterpotentiale.}
\newblock {\em {Ann. der Phys. (4)} 64\/} (1921), 253--287.

\bibitem{Gelfand-Shilov1_1964}
{\sc Gel'fand, I.~M., and Shilov, G.~E.}
\newblock {\em Generalized functions. {Vol}. {I}: {Properties} and operations.
  {Translated} by {E}. {Saletan}}.
\newblock 1964.

\bibitem{Jackson1999}
{\sc Jackson, J.~D.}
\newblock {\em Classical Electrodynamics}, 3rd~ed.
\newblock John Wiley \& Sons, Inc., 1999.

\bibitem{Kirsch2007}
{\sc Kirsch, A.}
\newblock An integral equation approach and the interior transmission problem
  for {M}axwell's equations.
\newblock {\em Inverse Probl. Imaging 1}, 1 (2007), 159--179.

\bibitem{KoppelmanPincus1959}
{\sc Koppelman, W., and Pincus, J.~D.}
\newblock Spectral representations for finite {H}ilbert transformations.
\newblock {\em Math. Z. 71\/} (1959), 399--407.

\bibitem{PolyaSzego1951}
{\sc P{\'o}lya, G., and Szeg{\"o}, G.}
\newblock {\em Isoperimetric inequalities in mathematical physics}, vol.~27 of
  {\em Ann. Math. Stud.}
\newblock Princeton University Press, Princeton, NJ, 1951.

\bibitem{PurcellPennypacker1973}
{\sc Purcell, E.~M., and Pennypacker, C.~R.}
\newblock Scattering and adsorption of light by nonspherical dielectric grains.
\newblock {\em Astrophys. J. 186\/} (1973), 705--714.

\bibitem{Rahola2000}
{\sc Rahola, J.}
\newblock On the eigenvalues of the volume integral operator of electromagnetic
  scattering.
\newblock {\em SIAM J. Sci. Comput. 21}, 5 (2000), 1740--1754.

\bibitem{Smunev_JQSRT_15}
{\sc Smunev, D.~A., Chaumet, P.~C., and Yurkin, M.~A.}
\newblock Rectangular dipoles in the discrete dipole approximation.
\newblock {\em Journal of Quantitative Spectroscopy and Radiative Transfer
  156}, 0 (2015), 67 -- 79.

\bibitem{Soehngen1954}
{\sc S\"{o}hngen, H.}
\newblock Zur {T}heorie der endlichen {H}ilbert-{T}ransformation.
\newblock {\em Math. Z. 60\/} (1954), 31--51.

\bibitem{Tricomi1957}
{\sc Tricomi, F.~G.}
\newblock {\em Integral equations}.
\newblock Pure and Applied Mathematics. Vol. V. Interscience Publishers, Inc.,
  New York; Interscience Publishers Ltd., London, 1957.

\bibitem{YurkinHoekstra2007}
{\sc Yurkin, M.~A., and Hoekstra, A.~G.}
\newblock The discrete dipole approximation: {An} overview and recent
  developments.
\newblock {\em J. Quant. Spectrosc. Radiat. Transf. 106}, 1 (2007), 558--589.

\bibitem{Yurkin:06i}
{\sc Yurkin, M.~A., Maltsev, V.~P., and Hoekstra, A.~G.}
\newblock Convergence of the discrete dipole approximation. {I}. {T}heoretical
  analysis.
\newblock {\em J. Opt. Soc. Am. A 23}, 10 (Oct 2006), 2578--2591.

\bibitem{YurkinMinHoekstra2010}
{\sc Yurkin, M.~A., Min, M., and Hoekstra, A.~G.}
\newblock Application of the discrete dipole approximation to very large
  refractive indices: Filtered coupled dipoles revived.
\newblock {\em Phys. Rev. E 82\/} (Sep 2010), 036703.

\bibitem{Zucker1979}
{\sc {Zucker}, I.~J.}
\newblock {The summation of series of hyperbolic functions}.
\newblock {\em {SIAM J. Math. Anal.} 10\/} (1979), 192--206.

\end{thebibliography}
 
\end{document}